\numberwithin{equation}{section}
\numberwithin{figure}{section}
\theoremstyle{plain}
\newtheorem{thm}{\protect\theoremname}[section]
\theoremstyle{definition}
\theoremstyle{corollary}
  \theoremstyle{remark}
  \newtheorem{rem}[thm]{\protect\remarkname}
  \theoremstyle{plain}
  \newtheorem{prop}[thm]{\protect\propositionname}
  \theoremstyle{plain}
  \newtheorem{lem}[thm]{\protect\lemmaname}
  \providecommand{\lemmaname}{Lemma}
  \providecommand{\propositionname}{Proposition}
  \providecommand{\remarkname}{Remark}
\providecommand{\theoremname}{Theorem}
\providecommand{\definitionname}{Definition}
\providecommand{\corollaryname}{Corollary}
\newcommand\bR{\mathbb{R}}
\begin{document}

\title[Asymptotic Behaviors of Fundamental Solutions]{Asymptotic behaviors of fundamental solution and its derivatives  related to  space-time fractional differential equations}
\begin{abstract}
Let $p(t,x)$ be the fundamental solution to the problem
$$
\partial_{t}^{\alpha}u=-(-\Delta)^{\beta}u, \quad   \alpha\in (0,2), \, \beta\in (0,\infty).
$$
In this paper  we  provide the asymptotic behaviors and sharp upper  bounds of   $p(t,x)$ and its space and time fractional derivatives
$$
D_{x}^{n}(-\Delta_x)^{\gamma}D_{t}^{\sigma}I_{t}^{\delta}p(t,x), \quad  \forall\,\,
n\in\mathbb{Z}_{+}, \,\, \gamma\in[0,\beta],\,\, \sigma, \delta \in[0,\infty),
$$
where $D_{x}^n$ is a partial derivative of order $n$ with respect to $x$, $(-\Delta_x)^{\gamma}$ is  a   fractional Laplace operator and $D_{t}^{\sigma}$ and $I_{t}^{\delta}$
are Riemann-Liouville fractional derivative and integral respectively.

\end{abstract}

\author{Kyeong-Hun Kim and Sungbin Lim}

\address{Department of Mathematics, Korea University, 1 Anam-Dong, Sungbuk-Gu,
Seoul, 136-701, Republic of Korea}
\email{kyeonghun@korea.ac.kr}
\thanks{This work was supported by Samsung Science  and Technology Foundation under Project Number SSTF-BA1401-02}

\address{Department of Mathematics, Korea University, 1 Anam-Dong, Sungbuk-Gu,
Seoul, 136-701, Republic of Korea}
\email{sungbin@korea.ac.kr}

\subjclass[2010]{26A33, 35A08, 35R11, 45K05, 45M05}
\keywords{Asymptotic behavior, Derivative estimates, Fractional partial differential equation, Fundamental solution, Space-time fractional differential equation}

\maketitle

\section{\label{sec:Introduction}Introduction}

Let $\alpha\in(0,2)$,  $\beta\in(0,\infty)$  and $p(t,x)$ be the fundamental solution to the space-time fractional equation
\begin{align}
					\label{eq:space-time FDE}
\partial_{t}^{\alpha}u=\Delta^{\beta}u,\quad(t,x)\in(0,\infty)\times\mathbb{R}^{d}
\end{align}
with $u(0,x)=u_{0}(x)$ (and $\partial_{t}u(0,x)=0$
if $\alpha>1$). Here $\partial_{t}^{\alpha}$ denotes Caputo fractional
derivative and  $\Delta^{\beta}:=-(-\Delta)^{\beta}$ is the fractional Laplacian. The fractional time derivative of order $\alpha\in (0,1)$  can be used
to model the anomalous diffusion exhibiting  subdiffusive behavior,
due to particle sticking and trapping phenomena (see \cite{metzler1999anomalous, metzler2004restaurant}), and the fractional spatial derivative describes long range jumps of particles. The fractional wave equation $\partial_t^{\alpha} u=\Delta u$ with $\alpha\in (1,2)$  governs the propagation of mechanical diffusive waves in viscoelastic media (see \cite{mainardi1995fractional, schneider1989fractional}).  Equation \eqref{eq:space-time FDE}  has been an important topic in the mathematical physics related to non-Markovian diffusion processes with a memory \cite{metzler2000boundary, MK}, in the probability theory related to jump processes \cite{chen2014fractional, chen2012space, meerschaert2014stochastic} and in the theory of differential equations \cite{clement2000schauder, clement2004quasilinear, KKL2014, SY, zacher2005maximal}.

The aim
of ths paper is to present rigorous and self-contained  exposition of   fundamental solution $p(t,x)$. More precisely,
we provide  asymptotic behaviors and upper  bound  of
\begin{align}
				\label{eq:main goal}
D_{x}^{n}(-\Delta_x)^{\gamma}D_{t}^{\sigma}I_{t}^{\delta}p(t,x), \quad  \forall\,\,
n\in\mathbb{Z}_{+}, \,\, \gamma\in[0,\beta],\,\, \sigma, \delta \in[0,\infty),
\end{align}
where $I_{t}^{\delta}$ and $D_{t}^{\sigma}$
denotes the Riemann-Liouville fractional integral and derivative respectively.

 There are certainly considerable works dealing with explicit formula for the fundamental solutions and their asymptotic behaviors (see e.g. \cite{anh2001spectral, EIK, eidelman2004cauchy, gorenflo2000mapping, hanyga2002multi, jo2004precise, kochubei2014asymptotic, li2014asymptotic}). However, only few of them cover the  derivative estimates of the fundamental solutions. In \cite{EIK, eidelman2004cauchy, KKL2014, kochubei2014asymptotic}, upper bounds of \eqref{eq:main goal} were obtained   for  $\beta=1$,  $\sigma=1-\alpha$, and $\gamma=0$. Also, in \cite{jo2004precise}  asymptotic behavior for the  case $\alpha=1,\beta\in(0,1)$ and $\sigma=0$ was obtained.  Note that it is assumed  that either $\alpha=1$ or $\beta=1$ in \cite{EIK, eidelman2004cauchy, jo2004precise,KKL2014, kochubei2014asymptotic}, and moreover spatial fractional derivative $\Delta^{\gamma}p$ and time fractional derivative $D^{\sigma}_tp$ are not obtained in  \cite{EIK, eidelman2004cauchy, KKL2014, kochubei2014asymptotic} and \cite{jo2004precise}  respectively. Our result  substantially improves these results because we only assume
$\alpha\in (0,2)$ and $\beta\in (0,\infty)$ and  we provide two sides estimates of   both  space and time fractional derivatives of arbitrary order.

Our approach relies also on the properties of some  special functions including the Fox H functions.
However, unlike most of previous works, we do not use the method of Mellin, Laplace and inverse Fourier transforms which, due to the non-exponential decay at infinity of the Fox H functions, require  restrictions  on  the space dimension $d$ and other parameters $\beta, \delta, \gamma$, and $\sigma$ (see Remark \ref{rem: integrable}).

Below we give two main applications of our results.  First, consider the  non-homogeneous
fractional evolution equation
\begin{align}
				\label{eq:non-homogeneous FDE}
\partial_{t}^{\alpha}u=\Delta^{\beta}u+f,\quad(t,x)\in(0,\infty)\times\mathbb{R}^{d}
\end{align}
with $u(0,x)=0$ and additionally $\partial_{t}u(0,x)=0$
if $\alpha>1$.  One can show  (see e.g. \cite{EIK, KKL2014, kochubei2014asymptotic}) that the solution to the problem is given by
$$
\mathcal{G}f(t,x)=\int_{0}^{t}\int_{\mathbb{R}^{d}}Q_{\alpha}(t-s,x-y)f(s,y)dyds
$$
where
$$
Q_{\alpha}(t,x):=\begin{cases}
D_{t}^{1-\alpha}p(t,x) & :\alpha\in(0,1)\\
I_{t}^{\alpha-1}p(t,x) & :\alpha\in(1,2).
\end{cases}
$$
It turns out (see \cite{kim2015l_q,Kim2014BMOpseudo})) that to obtain the  $L_q(L_p)$-estimate
\begin{align}
					\label{eq:Lp estimate}
\left\Vert \Delta^{\beta}\mathcal{G}f\right\Vert _{L_{q}((0,T),L_{p}(\mathbb{R}^{d}))}\leq N\|f\|_{L_{q}((0,T),L_{p}(\mathbb{R}^{d}))}, \quad p,q>1
\end{align}
it is sufficient to show that for  $a,b>0$
$$
\int_{0}^{a}\int_{|x|\geq b}|\Delta^{\beta}Q_{\alpha}(t,x)|dxdt\leq N\frac{a^{\alpha}}{b^{2\beta}},
$$
$$
\int_{a}^{\infty}\int_{\mathbb{R}^{d}}\left|\partial_{t}\Delta^{\beta}Q_{\alpha}(t,x)\right|dxdy\leq\frac{N}{a},\quad\int_{a}^{\infty}\int_{\mathbb{R}^{d}}\left|\nabla_{x}\Delta^{\beta}Q_{\alpha}(t,x)\right|dxdt\leq Na^{-\frac{\alpha}{2\beta}}.
$$
One can use our estimates to prove the above three inequalities, and therefore  \eqref{eq:Lp estimate}  can  be obtained as a corollary.
Our second application is    the $L_{p}$-theory of the stochastic partial differential equations of the type
$$
\partial_{t}^{\alpha}u = \Delta u + \partial_{t}^{\alpha+\sigma}\int_{0}^{t}g(s,x) dW_{s},
$$
where  $ \sigma < \frac{1}{2}$ and  $W_t$ is a Wiener process defined on a probability space $(\Omega, dP)$.  One can show (see \cite{chen2014fractional})  that the solution to this problem is given by the formula
$$
u(t,x)=\int_{0}^{t} \left(\int_{\mathbb{R}^{d}}P_{\sigma}(t-s,x-y)g(s,y)dy\right) dW_s.
$$
Here $P_{\sigma}(t,x)$ is defined as
$$
P_{\sigma}(t,x):=\begin{cases}
I_{t}^{|\sigma|}p(t,x) & :\sigma\leq0\\
D_{t}^{|\sigma|}p(t,x) & :\sigma>0.
\end{cases}
$$
As has been shown for the case $\alpha=1$ (see \cite{Kim2012, kim2013parabolic, Krylov2010, van2012stochastic}),  sharp estimates of $D^n_x \Delta^{\gamma}P_{\sigma}$ can be used to obtain
$L_p$-estimate
\begin{align}
				\label{stochastic maximal}
\left\Vert \Delta^{\gamma}u\right\Vert _{L_{p}(\Omega\times (0,T)\times\mathbb{R}^{d})}\leq N\|g\|_{L_{p}(\Omega\times (0,T)\times\mathbb{R}^{d})}.
\end{align}
The detail of \eqref{stochastic maximal}  will be given for $\gamma \leq (2 \wedge \frac{1-2\sigma}{\alpha})$ in a subsequent paper.

The rest of the article is organized as follows. In Section \ref{sec:Main-results}
we state our main results, Theorems \ref{thm:main result 1}, \ref{thm:derivative of kernel}, and \ref{cor:main result 1}.
In Section \ref{sec:Preliminaries} we present the definition of the
Fox H functions and their several properties. For the convenience
of the reader, we repeat the relevant material and demonstration in
\cite{kilbas2004h} and \cite{KKL2014}, thus making our exposition
self-contained. Section \ref{sec:Auxiliary-results} contains asymptotic
behaviors at zero and infinity of the Fox H function.  In Section \ref{sec:representation of solution}
we present explicit representation of fundamental solutions and their
fractional and classical derivatives. Finally, in Section \ref{sec: proof of main theorem}
we prove our main results.

We finish the introduction with some notion used in this article.
We write $f\apprle g$ for $|x|\leq \delta$ (resp. $|x|\geq \delta$)  if there exists a positive
constant $C$ independent of $x$ such that $f(x)\leq Cg(x)$ for $|x|\leq \delta$ (resp. $|x|\geq \delta$),  and $f\sim g$ for $|x|\leq \delta$ (resp. $|x|\geq \delta$) if
 $f\apprle g\apprle f$ for $|x|\leq \delta$ (resp. $|x|\geq \delta$).  We say $f\sim g$  as $|x|\to 0$ (resp. $|x|\to \infty$) if there exists $\varepsilon\in (0,1)$ such that $f\sim g$ for $|x|\leq \varepsilon$ (resp. $|x|\geq \varepsilon^{-1}$).
  We write $f(x)=O(g(|x|))$ as
$|x|\rightarrow0$ (resp. $|x|\to \infty$) if there exists $\delta>0$ such that
$|f(x)|\apprle|g(|x|)|$ for $|x|<\delta$ (resp. $|x|\geq \delta$).
 We use ``:='' to denote a definition. As usual $\mathbb{R}^{d}$
stands for the Euclidean space of points $x=(x^{1},\ldots,x^{d})$,
 $\mathbb{R}_{0}^{d}:=\mathbb{R}^{d}\setminus\{0\}$, and  $\mathbb{Z}_{+}:=\{0,1,2,\cdots\}$. For multi-indices $\mathfrak{a}=(a_{1},\ldots,a_{d})\in\mathbb{Z}^d_{+}$, $n\in \mathbb{N}$ and functions $u(x)$ we set
$$
D_{i}u=\frac{\partial u}{\partial x^{i}},\quad D_{x}^{\mathfrak{a}}u=D_{1}^{a_{1}}\cdots D_{d}^{a_{d}}u, \quad
D^n_x:=\{D^{\mathfrak{a}}_x :|\mathfrak{a}|=n\}.
$$
 $\lfloor a\rfloor$
is the biggest integer which is less than or equal to $a$. By $\mathcal{F}$
 we denote the $d$-dimensional Fourier transform, that is,
$$
\mathcal{F}\{f\}(\xi):=\int_{\mathbb{R}^{d}}e^{-\textnormal{i}(x,\xi)}f(x)dx.
$$
For a complex number $z$, $\Re[z]$ and $\Im[z]$ are the real part
and imaginary part of $z$ respectively.

\section{\label{sec:Main-results}Main results}

We first introduce some definitions related to the fractional calculus. Let $\beta\geq 0$.
For a function $u\in L_{1}(\mathbb{R}^{d})$, we write $\Delta^{\beta}u=f$
if  there exists a function $f\in L_{1}(\mathbb{R}^{d})$ such that
$$
\mathcal{F}\left\{ f(\cdot)\right\} (\xi)=|\xi|^{2\beta}\mathcal{F}\left\{ u\right\} (\xi).
$$
For $u\in L_{1}((0,T))$, the Riemann-Liouville fractional integral
of the order $\alpha\in(0,\infty)$ is defined as
$$
I_{t}^{\alpha}u:=\int_{0}^{t}(t-s)^{\alpha-1}u(s)ds, \quad t\leq T.
$$
One can easily check
\begin{equation}
                                                          \label{eqn 4.15.3}
I^{\alpha}_tI^{\beta}_t=I^{\alpha+\beta}_t, \quad \forall \, \alpha,\beta\geq 0.
\end{equation}

Let $n\in \mathbb{N}$ and $n-1\leq \alpha <n$. The Riemann-Liouville fractional derivative $D_{t}^{\alpha}$ and the Caputo fractional derivative $\partial_{t}^{\alpha}$ are defined as
\begin{equation}
                          \label{eqn 4.15}
D_{t}^{\alpha}u:=\left(\frac{d}{dt}\right)^{n}\left(I_{t}^{n-\alpha}u\right)
\end{equation}
$$
\partial^{\alpha}_tu:=D^{\alpha-(n-1)}_t\left(u^{(n-1)}(t)-u^{(n-1)}(0)\right).
$$
By definition \eqref{eqn 4.15} for any $\alpha\geq 0$ and $u\in L_1((0,T))$,
\begin{align}
				\label{eqn 4.20.1}
D^{\alpha}_t I_{t}^{\alpha} u=u.
\end{align}
Using \eqref{eqn 4.15.3}-\eqref{eqn 4.20.1}, one can check
\begin{align*}
\partial_{t}^{\alpha}u= D_{t}^{\alpha} \left(u(t)-\sum_{k=0}^{n-1}\frac{t^{k}}{k!}u^{(k)}(0)\right), \quad \quad n-1\leq \alpha <n.
\end{align*}
Thus $D^{\alpha}_tu=\partial^{\alpha}_t u$ if $u(0)=u^{(1)}(0)=\cdots = u^{(n-1)}(0)=0$. For more information on the fractional derivatives, we refer the reader to \cite{Po, SKM}.

For $\sigma\in\mathbb{R}$ we define Riemann-Liouville fractional operator $\mathbb{D}^{\alpha}_t$ as
$$
\mathbb{D}_{t}^{\sigma}:=\begin{cases}
D_{t}^{|\sigma|} & :\sigma>0\\
I_{t}^{|\sigma|} & :\sigma<0.
\end{cases}
$$
Then by \eqref{eqn 4.15.3} and \eqref{eqn 4.15}, for any
 $\alpha,\beta\geq0$
$$
D_{t}^{\alpha}I_{t}^{\beta}=\mathbb{D}_{t}^{\alpha-\beta}.
$$

The Mittag-Leffler function $E_{\alpha,\beta}(z)$ is defined as
\begin{equation}
         \label{mittag}
E_{\alpha,\beta}(z):=\sum_{k=0}^{\infty}\frac{z^{k}}{\Gamma(\alpha k+\beta)},\quad\alpha,\beta\in\mathbb{C},\Re[\alpha]>0
\end{equation}
for $z\in\mathbb{C}$ and we write $E_{\alpha}(z)=E_{\alpha,1}(z)$
for short. Using the equality
$$
\mathbb{D}^{\sigma}_t t^{\alpha}= \frac{\Gamma(\alpha+1)}{\Gamma(\alpha+1-\sigma)}t^{\alpha-\sigma}, \quad \forall \, \sigma\in \mathbb{R}, \alpha\geq 0
$$
one can check   that for $t>0$
$$
\mathbb{D}_{t}^{\sigma}E_{\alpha}(-\lambda t^{\alpha})=t^{-\sigma}E_{\alpha,1-\sigma}(-\lambda t^{\alpha}),\quad\sigma\in\mathbb{R},\lambda\geq0,
$$
and that  for any constant $\lambda$,
$$
\varphi(t):=E_{\alpha}(-\lambda t^{\alpha})
$$
satisfies $\varphi(0)=1$ (also $\varphi'(0)=0$ if $\alpha>1$) and
$$
\partial_{t}^{\alpha}\varphi=-\lambda\varphi,\quad t>0.
$$
Let $\alpha\in(0,2)$ and $\beta\in(0,\infty)$. By taking the Fourier
transform to the equation
$$
\partial_{t}^{\alpha}u=\Delta^{\beta}u,\quad t>0,\quad u(0,x)=u_{0}(x),\quad(\mbox{and }u'(0,x)=0\ \mbox{if}\ \mbox{\ensuremath{\alpha}>1})
$$
one can formally get
$$
\mathcal{F}\left\{ u(t,\cdot)\right\} =E_{\alpha}(-|\xi|^{2\beta}t^{\alpha})\mathcal{F}\left\{u_{0}\right\}.
$$
Therefore, to obtain the fundamental solution, it is needed to
find an integrable function $p(t,x)\in L_{1}(\mathbb{R}^{d})$ satisfying
\begin{align}
					\label{eq:main goal 2}
\mathcal{F}\{p(t,\cdot)\}=E_{\alpha}(-|\xi|^{2\beta}t^{\alpha}).
\end{align}

Denote
$$
\mathbf{M}(t,x):=|x|^{2\beta}t^{-\alpha}.
$$
 In the following  theorems we give the asymptotic behaviors of $D^n_x \Delta^{\gamma}\mathbb{D}^{\sigma}_t p(t,x)$ as $\mathbf{M} \to 0$ and $\mathbf{M}\to \infty$. We also provide upper bounds when $\mathbf{M}\leq 1$ and $\mathbf{M}\geq 1$.

 Firstly, we consider the case  $\mathbf{M}\to\infty$.

\begin{thm}
									\label{thm:main result 1}
Let $\alpha\in(0,2)$, $\beta\in(0,\infty)$, $\gamma\in[0,\infty)$, $\sigma\in \bR$ and $n\in\mathbb{N}$. There exists a function $p(t,\cdot)\in L_{1}(\mathbb{R}^{d})$ satisfying \eqref{eq:main goal 2}.  Furthermore,  the following asymptotic behaviors hold as $\mathbf{M} \to \infty$:

(i) If $\beta\in{\mathbb{N}}$, then  for some constant $c>0$ depending only on $d,n,\alpha,\beta,\sigma$
\begin{align*}
\left|\mathbb{D}_{t}^{\sigma}p(t,x)\right|\apprle |x|^{-d}t^{-\sigma}\exp\left\{-c|x|^{\frac{2\beta}{2\beta-\alpha}}t^{-\frac{\alpha}{2\beta-\alpha}}\right\}
\end{align*}
and
\begin{align}
					\label{eq:asy exp large-1}
\left|D_{x}^{n}\mathbb{D}_{t}^{\sigma}p(t,x)\right|  \apprle  |x|^{-d-n}t^{-\sigma}\exp\left\{-c|x|^{\frac{2\beta}{2\beta-\alpha}}t^{-\frac{\alpha}{2\beta-\alpha}}\right\}.
\end{align}

(ii) If $\alpha=1$, $\beta\notin\mathbb{N}$, and $\sigma=0$,
\begin{align}
				\label{eq:asy 0 large}
\left|\Delta^{\gamma}p(t,x)\right|\sim\begin{cases}
t|x|^{-d-2\gamma-2\beta}& :\gamma\in \mathbb{Z}_{+} \\
|x|^{-d-2\gamma} & :\gamma\in [0,\infty)\setminus \mathbb{Z}_{+}
\end{cases}
\end{align}
and
\begin{align*}
\left|D_{x}^{n}\Delta^{\gamma}p(t,x)\right|\apprle\begin{cases}
t|x|^{-d-2\gamma-2\beta-n}& :\gamma\in \mathbb{Z}_{+} \\
|x|^{-d-2\gamma-n} & :\gamma\in [0,\infty)\setminus \mathbb{Z}_{+}.
\end{cases}
\end{align*}

(iii)  If $\gamma\in(0,\beta)\setminus\mathbb{N}$,
\begin{align*}
|\Delta^{\gamma}\mathbb{D}_{t}^{\sigma}p(t,x)|\sim |x|^{-d-2\gamma}t^{-\sigma}
\end{align*}
and
\begin{align*}
|D_{x}^{n}\Delta^{\gamma}\mathbb{D}_{t}^{\sigma}p(t,x)|\apprle |x|^{-d-2\gamma-n}t^{-\sigma}.
\end{align*}

(iv)  If $\beta\notin\mathbb{N}$ and $\gamma\in [0,\beta)\cap\mathbb{Z}_{+}$,
\begin{align}
				\label{eq:asy 2 large}
\left|\Delta^{\gamma}\mathbb{D}_{t}^{\sigma}p(t,x)\right|\sim|x|^{-d-2\gamma-2\beta}t^{-\sigma+\alpha}
\end{align}
and
\begin{align*}
\left|D_{x}^{n}\Delta^{\gamma}\mathbb{D}_{t}^{\sigma}p(t,x)\right|\apprle|x|^{-d-2\gamma-2\beta-n}t^{-\sigma+\alpha}.
\end{align*}

 (v) If $\gamma=\beta$ and $d\geq2$,
\begin{align}
				\label{eq:asy 3 large}
\left|\Delta^{\beta}\mathbb{D}_{t}^{\sigma}p(t,x)\right|\sim\begin{cases}
|x|^{-d-4\beta}t^{-\sigma+\alpha} & :\beta\in\mathbb{N}\textnormal{ or }\sigma\in\mathbb{N}\\
|x|^{-d-2\beta}t^{-\sigma} & : \textnormal{otherwise}
\end{cases}
\end{align}
and
\begin{align}
				\label{eq:asy 3-1 large}
\left|D_{x}^{n}\Delta^{\beta}\mathbb{D}_{t}^{\sigma}p(t,x)\right|\apprle\begin{cases}
|x|^{-d-4\beta-n}t^{-\sigma+\alpha} & :\beta\in\mathbb{N}\textnormal{ or }\sigma\in\mathbb{N}\\
|x|^{-d-2\beta-n}t^{-\sigma} & : \textnormal{otherwise}.
\end{cases}
\end{align}

(vi)  If $\gamma=\beta$, $\sigma+\alpha\in\mathbb{N}$, and $d=1$, then
\eqref{eq:asy 3 large} and \eqref{eq:asy 3-1 large} hold.

\end{thm}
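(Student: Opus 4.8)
The plan is to reduce the entire statement to the large-argument asymptotics of a single Fox H function in the self-similar variable $\mathbf{M}=|x|^{2\beta}t^{-\alpha}$, and then to read off cases (i)--(vi) from the pole structure of the associated Mellin--Barnes integrand.

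\emph{Existence and representation.} First I would produce $p(t,\cdot)\in L_{1}(\bR^{d})$ satisfying \eqref{eq:main goal 2} by writing $p$ as the inverse Fourier transform of $E_{\alpha}(-|\xi|^{2\beta}t^{\alpha})$ and exploiting the exact scaling $\cF\{p(t,\cdot)\}(\xi)=E_{\alpha}\big(-(|\xi|t^{\alpha/(2\beta)})^{2\beta}\big)$, which forces the self-similar form $p(t,x)=t^{-d\alpha/(2\beta)}P(|x|t^{-\alpha/(2\beta)})$ for a radial profile $P$; integrability of $P$ then follows from the decay of the profile itself. I would invoke the explicit Fox H representation derived in Section \ref{sec:representation of solution}: applying $D_{x}^{n}\Delta^{\gamma}\bD_{t}^{\sigma}$ and using the scaling rule $\bD_{t}^{\sigma}t^{a}=\frac{\Gamma(a+1)}{\Gamma(a+1-\sigma)}t^{a-\sigma}$ term by term preserves the self-similar structure, so that
\[
D_{x}^{n}\Delta^{\gamma}\bD_{t}^{\sigma}p(t,x)=|x|^{-d-n-2\gamma}\,t^{-\sigma}\,\Theta(x)\,H(\mathbf{M}),
\]
where $\Theta$ collects bounded homogeneous factors of degree zero and $H$ is a specific Fox H function of the single variable $\mathbf{M}$. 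This reduces the whole theorem to the behavior of $H(\mathbf{M})$ as $\mathbf{M}\to\infty$.

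\emph{Large-$\mathbf{M}$ asymptotics via poles.} The behavior as $\mathbf{M}\to\infty$ is governed, through the asymptotic expansions of Section \ref{sec:Auxiliary-results}, by the poles of the gamma factors controlling the large-argument branch of the Mellin--Barnes integral for $H$; the expansion proceeds in powers of $\mathbf{M}^{-1}$, and the first surviving residue dictates the leading order. In the generic regimes the leading residue gives $H(\mathbf{M})\to\text{const}\neq0$, which, read through the prefactor $|x|^{-d-n-2\gamma}t^{-\sigma}$, yields the plain power laws of (iii) and of the non-integer subcases. The key subtlety is that $\Delta^{\gamma}$ (multiplication by $|\xi|^{2\gamma}$) and $\bD_{t}^{\sigma}$ (the factor $\Gamma(a+1)/\Gamma(a+1-\sigma)$) insert gamma quotients whose reciprocal-gamma factors vanish when an argument hits a nonpositive integer, i.e. $1/\Gamma(\text{nonpositive integer})=0$; this can annihilate the otherwise-leading residue. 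I would compute these residues explicitly and isolate, in each parameter regime, the first pole that survives.

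\emph{The case split and the main obstacle.} Cases (i)--(vi) are precisely the regimes with distinct surviving poles. When $\beta\in\bN$ (case (i)) the Fox H function is of Wright type and the whole power series is asymptotically negligible: Section \ref{sec:Auxiliary-results} gives instead the (stretched-)exponential $\exp\{-c\,\mathbf{M}^{1/(2\beta-\alpha)}\}=\exp\{-c|x|^{2\beta/(2\beta-\alpha)}t^{-\alpha/(2\beta-\alpha)}\}$, which for $\alpha=\beta=1$ recovers the Gaussian tail. In the integer regimes, namely the $\gamma\in\bZ_{+}$ subcases of (ii) and (iv), and the ``$\beta\in\bN$ or $\sigma\in\bN$'' branch of (v)--(vi), the constant residue carries a vanishing factor, so the leading term drops and the next pole contributes $H\sim\mathbf{M}^{-1}=|x|^{-2\beta}t^{\alpha}$, producing exactly the corrected exponents $|x|^{-d-2\gamma-2\beta}t^{-\sigma+\alpha}$ (and $t|x|^{-d-2\gamma-2\beta}$ when $\alpha=1,\sigma=0$); the dimension $d=1$ must be separated because a pole collides with the dimension-dependent factor, and the hypothesis $\sigma+\alpha\in\bN$ restores the same surviving pole as for $d\ge2$. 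For the $D_{x}^{n}$ estimates I would note that each spatial derivative of a radial function of $\mathbf{M}$ lowers the power of $|x|$ by one, giving the extra $|x|^{-n}$, but directional cancellations prevent a matching lower bound, so only the upper bounds $\apprle$ are asserted there while the two-sided $\sim$ statements are reserved for the scalar quantities $\Delta^{\gamma}\bD_{t}^{\sigma}p$. The main obstacle is the pole bookkeeping: verifying in every regime that the claimed leading residue is genuinely nonzero (for the lower bounds in the $\sim$ cases) and that the neglected Mellin--Barnes tail is of strictly smaller order uniformly in $\mathbf{M}$; the integer cases, where cancellations promote subleading poles, and the $d=1$ boundary require the most care.
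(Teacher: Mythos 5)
Your overall architecture (explicit Fox H representation in the variable $\mathbf{M}$, with cases (i)--(vi) read off from which residue of the Mellin--Barnes integrand survives) is indeed the paper's architecture, and your reading of the large-$\mathbf{M}$ pole structure is essentially correct. But there is a genuine gap at the one step you treat as automatic: the identification of $D_{x}^{n}\Delta^{\gamma}\mathbb{D}_{t}^{\sigma}p(t,x)$ with the Fox H expression $D^n_x p_{\sigma,\gamma}(t,x)$, where $p_{\sigma,\gamma}(t,x)=\frac{2^{2\gamma}}{\pi^{d/2}}|x|^{-d-2\gamma}t^{-\sigma}\mathbb{H}_{\sigma,\gamma}\left(2^{-2\beta}\mathbf{M}\right)$. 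The time part can be handled as you suggest (this is Theorem \ref{thm:time derivative}, proved by Fubini on the contour integral), but $\Delta^{\gamma}$ is a Fourier multiplier, and applying it ``term by term'' to a self-similar profile has no meaning without proof: one must show $\mathcal{F}\{p_{\sigma,\gamma}(t,\cdot)\}(\xi)=|\xi|^{2\gamma}t^{-\sigma}E_{\alpha,1-\sigma}(-t^{\alpha}|\xi|^{2\beta})$, which is exactly \eqref{eq:goal} and occupies all of Section \ref{sec: proof of main theorem}: for $d\geq2$ a radial Fourier transform computed through Bessel functions and a Mellin formula, and for $d=1$ a cosine transform computed through ${}_{1}F_{2}$ hypergeometric identities, in both cases with Fubini justified only after placing the contour $\Re[z]=\ell_{0}$ in a dimension-dependent window. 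This is also where assertion (vi) acquires its hypothesis: for $d=1$, $\gamma=\beta$ the argument requires shifting the contour across $z=-1$, and the relevant residue \eqref{eq:residue vanishes} vanishes (so the shift is legitimate) precisely when $\sigma+\alpha\in\mathbb{N}$. Your explanation --- that $d=1$ must be separated because ``a pole collides with the dimension-dependent factor'' in the asymptotic expansion --- misplaces the obstruction: the asymptotics of $\mathbb{H}_{\sigma,\beta}$ do not depend on $d$ in this way; the restriction comes from the Fubini/contour-shift step in verifying the representation (cf.\ the remark following Theorem \ref{thm:main result 1}).

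A second, related gap is your existence step. You propose to define $p$ as the inverse Fourier transform of $E_{\alpha}(-|\xi|^{2\beta}t^{\alpha})$ and to deduce $p(t,\cdot)\in L_{1}$ ``from the decay of the profile itself.'' For $\alpha\neq1$ one has $E_{\alpha}(-s)\sim Cs^{-1}$ as $s\to\infty$, so the multiplier decays only like $|\xi|^{-2\beta}$; whenever $2\beta\leq d$ the inverse Fourier integral is not absolutely convergent, the ``profile'' is not yet a well-defined function, and its decay is precisely what the theorem is meant to establish --- the argument is circular. This is why the paper runs the logic in the opposite direction: it \emph{defines} $p_{\sigma,\gamma}$ by the Mellin--Barnes integral \eqref{eq:kernel representation} (absolutely convergent since $\alpha^{*}=2-\alpha>0$), extracts decay and integrability from Lemmas \ref{lem:H large r}--\ref{lem:H^q small} and Theorems \ref{thm: derivative estimate}, \ref{thm: usual derivative estimate 2}, and only afterwards verifies the Fourier identity \eqref{eq:main goal 2}. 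To repair your proposal you would either have to restrict to $2\beta>d$ (defeating the generality of the theorem) or interpret the inverse transform distributionally and then still prove the pointwise Fox H formula --- which lands you back at the Section \ref{sec: proof of main theorem} computation you skipped. A minor further imprecision: for $n\geq1$ the derivative $D_{x}^{n}p_{\sigma,\gamma}$ is not a single $\mathbb{H}$ function times a bounded factor but a finite sum of the functions $\mathbb{H}^{(q)}_{\sigma,\gamma}$, $0\leq q\leq n$, as in the proof of Theorem \ref{thm: usual derivative estimate 2}; this is harmless for the upper bounds claimed here but should be stated correctly.
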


\begin{rem}
(i) Note that $D_{x}^{n}\mathbb{D}_{t}^{\sigma}p(t,x)$ has exponential decay as $\mathbf{M}\to\infty$ only when $\beta$ is a positive integer.

(ii) Let $x\neq 0$. Then by \eqref{eq:asy exp large-1} and \eqref{eq:asy 2 large} with $\gamma=0$,  $D^n_x\mathbb{D}_{t}^{\sigma}p(t,x)\rightarrow 0 $ as $t\rightarrow 0$ if either $\beta$ is a positive integer or  $\sigma<\alpha$.

(iii) If $\gamma=\beta$ and $d=1$, then we additionally  assumed  $\sigma+\alpha\in \mathbb{N}$.  Without this extra condition we had a trouble in using Fubini's theorem in our proof.

(iv)  Note that we have only upper bounds of $D^n_x \Delta^{\gamma}\mathbb{D}^{\sigma}_tp(t,x)$ unless $n=0$. This is because, for instance, $D_ip(t,x)$ is of type $x^i g(t,x)$ (see \eqref{eq:fundamental solution}) and  becomes zero if $x^i=0$. Hence  we can not have positive lower bound of $D_ip(t,x)$ for such $x$.
\end{rem}

Secondly, we consider the case $\mathbf{M}\rightarrow 0$.

\begin{thm}
								\label{thm:derivative of kernel}
Let $\alpha,\beta,\gamma,\sigma$ be given as in Theorem \ref{thm:main result 1} and $n\in \mathbb{N}$. Then the following asymptotic behaviors hold as $\mathbf{M}\rightarrow 0$:

(i) If  $\gamma\in[0,\beta)$ and $\sigma+\alpha\notin \mathbb{N}$,
\begin{align}	
				\label{eq:asy 1 small}
\left|\Delta^{\gamma}\mathbb{D}_{t}^{\sigma}p(t,x)\right|\sim\begin{cases}
t^{-\sigma-\frac{\alpha(d+2\gamma)}{2\beta}} & :\gamma<\beta-\frac{d}{2}\\
|x|^{-d-2\gamma+2\beta}t^{-\sigma-\alpha}\left(1+|\ln |x|^{2\beta}t^{-\alpha}|\right) & :\gamma=\beta-\frac{d}{2}\\
|x|^{-d-2\gamma+2\beta}t^{-\sigma-\alpha} & :\gamma>\beta-\frac{d}{2}
\end{cases}
\end{align}
and
\begin{align}
				                               \label{eq:asy 2 small}
|D_{x}^{n}\Delta^{\gamma}\mathbb{D}_{t}^{\sigma}p(t,x)|\apprle\begin{cases}
|x|^{2-n}t^{-\sigma-\frac{\alpha(d+2\gamma+2)}{2\beta}} & :\gamma<\beta-\frac{d}{2}-1\\
|x|^{2-n}t^{-\sigma-\alpha}(1+|\ln |x|^{2\beta}t^{-\alpha}|) & :\gamma=\beta-\frac{d}{2}-1\\
|x|^{-d-2\gamma+2\beta-n}t^{-\sigma-\alpha} & :\gamma>\beta-\frac{d}{2}-1.
\end{cases}
\end{align}

(ii)  If $\gamma\in [0,\beta)$ and $\sigma+\alpha\in\mathbb{N}$,
\begin{align}
					\label{eq:asy 3 small}
\left|\Delta^{\gamma}\mathbb{D}_{t}^{\sigma}p(t,x)\right|\sim\begin{cases}
t^{-\sigma-\frac{\alpha(d+2\gamma)}{2\beta}} & :\gamma<2\beta-\frac{d}{2}\\
|x|^{-d-2\gamma+4\beta}t^{-\sigma-2\alpha}\left(1+\left|\ln |x|^{2\beta}t^{-\alpha}\right|\right) & :\gamma=2\beta-\frac{d}{2}\\
|x|^{-d-2\gamma+4\beta}t^{-\sigma-2\alpha} & :\gamma>2\beta-\frac{d}{2}
\end{cases}
\end{align}
and
\begin{align}
				\label{eq:asy 4 small}
\left|D_{x}^{n}\Delta^{\gamma}\mathbb{D}_{t}^{\sigma}p(t,x)\right|\apprle\begin{cases}
|x|^{2-n}t^{-\sigma-\frac{\alpha(d+2\gamma+2)}{2\beta}}&:\gamma<2\beta-\frac{d}{2}-1\\
|x|^{2-n}t^{-\sigma-2\alpha}(1+\left|\ln|x|^{2\beta}t^{-\alpha}\right|) & :\gamma=2\beta-\frac{d}{2}-1 \\
|x|^{-d-2\gamma+4\beta-n}t^{-\sigma-2\alpha} & :\gamma>2\beta-\frac{d}{2}-1.
\end{cases}
\end{align}

(iii) If $\alpha=1$ and $\sigma=0$,

\begin{align*}
\left|\Delta^{\gamma}p(t,x)\right| \sim t^{-\frac{d+2\gamma}{2\beta}},\quad \left|D_{x}^{n}\Delta^{\gamma}p(t,x)\right| \apprle |x|^{2-n}t^{-\frac{d+2\gamma+2}{2\beta}}.
\end{align*}

(iv) If $\gamma=\beta$ and $d\geq2$,
\begin{align}
				\label{eq:asy 6 small}
\left|\Delta^{\beta}\mathbb{D}_{t}^{\sigma}p(t,x)\right|\sim\begin{cases}
t^{-\sigma-\alpha-\frac{\alpha d}{2\beta}} & :\frac{d}{2}<\beta\\
|x|^{-d+2\beta}t^{-\sigma-2\alpha}\left(1+\left|\ln|x|^{2\beta}t^{-\alpha}\right|\right) & :\frac{d}{2}=\beta\\
|x|^{-d+2\beta}t^{-\sigma-2\alpha} & :\frac{d}{2}>\beta
\end{cases}
\end{align}
and
\begin{align}
				\label{eq:asy 7 small}
\left|D_{x}^{n}\Delta^{\beta}\mathbb{D}_{t}^{\sigma}p(t,x)\right|\apprle\begin{cases}
|x|^{2-n}t^{-\sigma-\alpha-\frac{\alpha(d+2)}{2\beta}} & :\frac{d}{2}+1<\beta \\
|x|^{2-n}t^{-\sigma-2\alpha}(1+\left|\ln|x|^{2\beta}t^{-\alpha}\right|) & :\frac{d}{2}+1=\beta\\
|x|^{-d+2\beta-n}t^{-\sigma-2\alpha} & :\frac{d}{2}+1>\beta.
\end{cases}
\end{align}

 (v) If $\gamma=\beta$, $\sigma+\alpha\in\mathbb{N}$, and $d=1$, then \eqref{eq:asy 6 small} and \eqref{eq:asy 7 small} hold.

\end{thm}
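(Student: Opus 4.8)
The plan is to read off every case of the theorem from the explicit Fox $H$-function representation of $\Delta^{\gamma}\mathbb{D}^{\sigma}_{t}p$ produced in Section~\ref{sec:representation of solution}, fed into the zero-asymptotics of the Fox $H$ function from Section~\ref{sec:Auxiliary-results}. Since $\cF\{p(t,\cdot)\}=E_{\alpha}(-|\xi|^{2\beta}t^{\alpha})$ depends only on $|\xi|$, each profile $\Delta^{\gamma}\mathbb{D}^{\sigma}_{t}p(t,\cdot)$ is radial; writing its inverse Fourier transform as a Hankel transform and combining the Mellin--Barnes representations of $E_{\alpha}$ and of the Bessel kernel, Section~\ref{sec:representation of solution} yields an identity of the schematic self-similar form
\[
\Delta^{\gamma}\mathbb{D}^{\sigma}_{t}p(t,x)=c\,|x|^{-d-2\gamma}\,t^{-\sigma}\,H(\mathbf{M}),
\]
in which the Fox $H$ parameters encode $d,\alpha,\beta,\gamma,\sigma$ and $\mathbf{M}=|x|^{2\beta}t^{-\alpha}$ is precisely the Fox $H$ argument. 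Thus the regime $\mathbf{M}\to0$ is exactly the small-argument regime of $H$, and the whole theorem reduces to extracting the leading term of its zero-expansion.

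First I would invoke the Mellin--Barnes expansion: as $\mathbf{M}\to0$, $H(\mathbf{M})$ expands into powers $\mathbf{M}^{s_\ast}$ dictated by the poles of the relevant product of Gamma factors, the dominant term coming from the smallest such exponent. Two pole families compete: a \emph{spatial} family, from the Bessel Gamma factors, contributing the pole at $s=(d+2\gamma)/(2\beta)$ and hence the scaling rate $t^{-\sigma-\alpha(d+2\gamma)/(2\beta)}$, and a \emph{temporal} family, from the reciprocal Mittag--Leffler factor, contributing the pole at $s=1$ and hence $|x|^{-d-2\gamma+2\beta}t^{-\sigma-\alpha}$. Comparing the two leading poles gives the trichotomy of \eqref{eq:asy 1 small}: the spatial pole wins when $(d+2\gamma)/(2\beta)<1$, i.e. $\gamma<\beta-\frac{d}{2}$, the temporal pole wins when $\gamma>\beta-\frac{d}{2}$, and at the threshold $\gamma=\beta-\frac{d}{2}$ the two poles coincide, producing a double pole whose residue carries the logarithmic factor $1+|\ln(|x|^{2\beta}t^{-\alpha})|$. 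This is the standard mechanism by which coincident Mellin poles generate logarithms, and I would make it precise using the residue formulas of Section~\ref{sec:Auxiliary-results}, checking in each regime that the leading coefficient is nonzero so that the two-sided relation $\sim$ holds.

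The split between \eqref{eq:asy 1 small}--\eqref{eq:asy 2 small} and \eqref{eq:asy 3 small}--\eqref{eq:asy 4 small} according to whether $\sigma+\alpha\in\mathbb{N}$ is the delicate point and, I expect, the main obstacle. In the representation the residue of the leading temporal pole (at $s=1$) carries a reciprocal Gamma factor of the form $1/\Gamma(\,\cdot-(\sigma+\alpha)\,)$; when $\sigma+\alpha$ is a positive integer this factor vanishes, so the $s=1$ term drops out and the dominant temporal contribution moves to the pole at $s=2$. This is exactly why in \eqref{eq:asy 3 small} the threshold shifts from $\beta-\frac{d}{2}$ to $2\beta-\frac{d}{2}$ and the leading term acquires the extra factor $\mathbf{M}=|x|^{2\beta}t^{-\alpha}$, turning $t^{-\sigma-\alpha}$ into $t^{-\sigma-2\alpha}$. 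I would verify the vanishing by tracking this Gamma quotient explicitly from the Section~\ref{sec:representation of solution} formulas. Parts (iii)--(v) follow as specializations of the same computation: for $\alpha=1,\sigma=0$ the Mittag--Leffler function collapses to $e^{-z}$, the entire temporal family degenerates, and only the smooth scaling rate survives, giving (iii); for $\gamma=\beta$ the equation forces $\Delta^{\beta}p=\partial_t^{\alpha}p$, so applying $\Delta^{\beta}$ acts like an extra $\partial_t^{\alpha}$ and the behavior universally mirrors the integer case (ii) with $\gamma=\beta$ inserted, yielding \eqref{eq:asy 6 small}--\eqref{eq:asy 7 small}. The hypothesis $d\ge2$, or $d=1$ with $\sigma+\alpha\in\mathbb{N}$ in (v), is needed precisely to justify the Fubini/contour manipulations behind the $\gamma=\beta$ representation, as already flagged in the remark to Theorem~\ref{thm:main result 1}.

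Finally, for the derivative bounds \eqref{eq:asy 2 small}, \eqref{eq:asy 4 small} and \eqref{eq:asy 7 small} I would differentiate the representation, using the Fox $H$ differentiation rules (which raise the parameters) and then the same zero-asymptotics. Because $p$ is radial, $D_i p$ has the form $x^i g(t,|x|)$, so each spatial derivative factors out an $x^i$ and shifts the effective spatial order by one; this is the source both of the threshold shift by $1$ and of the power $|x|^{2-n}$ in the sub-critical regimes, and also of the fact that one obtains only the one-sided estimates $\apprle$: the prefactor $x^i$ vanishes on the coordinate hyperplanes, so no positive lower bound can hold for $n\ge1$. The principal difficulty throughout remains the pole bookkeeping---identifying the cancellation at $\sigma+\alpha\in\mathbb{N}$, isolating the double-pole logarithms, and confirming non-degeneracy of the leading coefficients so that the sharp two-sided statements are valid for $n=0$.
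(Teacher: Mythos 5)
Your plan is correct and follows essentially the same route as the paper: you identify $\Delta^{\gamma}\mathbb{D}^{\sigma}_{t}p$ with the self-similar Fox H profile $c\,|x|^{-d-2\gamma}t^{-\sigma}\mathbb{H}_{\sigma,\gamma}(\mathbf{M})$ via the Hankel/Mellin--Barnes computation (the paper's Section \ref{sec:representation of solution} and Section \ref{sec: proof of main theorem}), extract the small-argument trichotomy from the competing spatial poles at $-(d+2\gamma)/(2\beta)$ and temporal poles at $-1,-2,\ldots$ with double poles producing the logarithm and the residue at $z=-1$ killed by $1/\Gamma(1-\sigma-\alpha)$ when $\sigma+\alpha\in\mathbb{N}$ (or by the removable singularity when $\gamma=\beta$), exactly as in Lemmas \ref{lem:H small r} and \ref{lem:H^q small}, and obtain the derivative bounds and their one-sidedness from the Fox H differentiation rule with cancellation of the leading coefficient, as in Theorem \ref{thm: usual derivative estimate 2}. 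This matches the paper's proof in both decomposition and key lemmas, including the role of the $d=1$, $\gamma=\beta$ restriction.
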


Next we give  the     upper estimates when $\mathbf{M}\geq 1$ and $\mathbf{M}\leq 1$.

\begin{thm}
									\label{cor:main result 1}
(i) Assertions (i)-(iv) of Theorem \ref{thm:main result 1} also hold for $\mathbf{M}\geq 1$ if  ``$\sim$'' is replaced by ``$\apprle$".

(ii) Assertions (i)-(v) of Theorem \ref{thm:derivative of kernel} also hold for $\mathbf{M}\leq 1$ if  ``$\sim$'' is replaced by ``$\apprle$".

\end{thm}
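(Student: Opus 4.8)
The plan is to deduce the corollary from Theorems~\ref{thm:main result 1} and~\ref{thm:derivative of kernel} by a compactness argument; the point is that those theorems already contain the desired upper bounds in a neighborhood of $\mathbf{M}=\infty$ (resp. $\mathbf{M}=0$), so that only a bounded intermediate range of $\mathbf{M}$ remains to be treated. Indeed, by the meaning of ``$\sim$ as $\mathbf{M}\to\infty$'' fixed in the introduction, each assertion of Theorem~\ref{thm:main result 1} supplies some $\varepsilon\in(0,1)$ for which the stated estimate --- in particular its upper half --- holds whenever $\mathbf{M}\geq\varepsilon^{-1}$; likewise each assertion of Theorem~\ref{thm:derivative of kernel} gives an $\varepsilon$ with the bound valid for $\mathbf{M}\leq\varepsilon$. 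Since the case lists are finite, it therefore suffices to establish the upper estimates on the remaining compact ranges $1\leq\mathbf{M}\leq\varepsilon^{-1}$ for part~(i) and $\varepsilon\leq\mathbf{M}\leq1$ for part~(ii).

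The structural input is self-similarity, which I would read off either from \eqref{eq:main goal 2} or, more directly, from the explicit representations of $D^n_x\Delta^{\gamma}\mathbb{D}^{\sigma}_t p$ in terms of Fox $H$ functions established in Section~\ref{sec:representation of solution}. Each such quantity is a fixed power of $t$ times a function of the rescaled variable $\eta:=x\,t^{-\alpha/(2\beta)}$, which obeys $|\eta|^{2\beta}=\mathbf{M}$; concretely
\begin{align*}
D^n_x\Delta^{\gamma}\mathbb{D}^{\sigma}_t p(t,x)=t^{-\sigma-\frac{\alpha(d+n+2\gamma)}{2\beta}}\,\Phi(\eta),
\end{align*}
and every right-hand side appearing in Theorems~\ref{thm:main result 1}--\ref{thm:derivative of kernel} can be rewritten, after the substitution $|x|=|\eta|\,t^{\alpha/(2\beta)}$, as the \emph{same} power of $t$ times a strictly positive continuous radial function $\Psi(\eta)=\psi(|\eta|)$ on $\mathbb{R}_0^d$ (the logarithmic and exponential factors become $1+|\ln|\eta|^{2\beta}|$ and $e^{-c|\eta|^{2\beta/(2\beta-\alpha)}}$, both positive and continuous). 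Dividing out the common power of $t$, the claimed inequality reduces to the comparison $|\Phi(\eta)|\apprle\Psi(\eta)$ on the compact annulus $\{1\leq|\eta|^{2\beta}\leq\varepsilon^{-1}\}$ for part~(i), and on $\{\varepsilon\leq|\eta|^{2\beta}\leq1\}$ for part~(ii).

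Each such annulus is closed, bounded, and bounded away from the origin, hence compact in $\mathbb{R}^d$. There I would use that the Fox $H$ functions entering $\Phi$ are real-analytic for $|\eta|\in(0,\infty)$ (so $\Phi$ is continuous, hence bounded, say $|\Phi|\leq C$), while $\Psi$, being continuous and strictly positive on the compact annulus, is bounded below by some $c_0>0$. Thus $|\Phi|\leq C\leq(C/c_0)\Psi$ there, which is exactly the required $\apprle$ bound with a possibly enlarged constant; combining this with the neighborhood estimates from the two theorems yields (i) and (ii). Note that only the upper bound can survive: since $\Phi$ is essentially a Mittag-Leffler/Fox $H$ profile it may vanish, or change sign, in the intermediate range, so no uniform positive lower bound is available --- which is precisely why ``$\sim$'' must be weakened to ``$\apprle$''.

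The one thing to watch is the finiteness and continuity of the profile $\Phi$ over the \emph{whole} compact annulus, i.e. the validity of the Section~\ref{sec:representation of solution} representation for intermediate $\mathbf{M}$. This is immediate from the analyticity of the Fox $H$ functions away from $0$ and $\infty$ in every case except $d=1,\ \gamma=\beta$, where --- as noted in the remark following Theorem~\ref{thm:main result 1} --- the representation rests on an application of Fubini's theorem that is justified only under the extra hypothesis $\sigma+\alpha\in\mathbb{N}$. This is exactly why assertion~(vi) of Theorem~\ref{thm:main result 1} and~(v) of Theorem~\ref{thm:derivative of kernel}, together with their extensions asserted here, carry that restriction.
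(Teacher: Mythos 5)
Your proposal is correct, but its logical organization differs from the paper's. You deduce the corollary \emph{from} Theorems \ref{thm:main result 1} and \ref{thm:derivative of kernel}: the scaling identity \eqref{eq:scaling} reduces both sides to profiles in $\eta=t^{-\alpha/(2\beta)}x$ with a common power of $t$, and on the leftover compact annulus you invoke continuity (analyticity) of the Fox H profile together with strict positivity of the comparison function. The paper never derives the corollary from the two asymptotic theorems; rather, all three results are proved simultaneously from the same source. Namely, Theorems \ref{thm: derivative estimate} and \ref{thm: usual derivative estimate 2} already state the upper bounds on the \emph{full} ranges $|x|^{2\beta}t^{-\alpha}\geq 1$ and $|x|^{2\beta}t^{-\alpha}\leq 1$, because the residue expansions behind Lemmas \ref{lem:H large r}--\ref{lem:H^q small} (via Proposition \ref{prop:asymptotic expansion}) come with error terms controlled uniformly for $r\geq 1$, resp.\ $r\leq 1$, and because $\mathbb{H}_{\sigma,\gamma}$ is bounded on $(0,\infty)$; the corollary is then immediate once the Fourier identification \eqref{eq:goal} is established in Section \ref{sec: proof of main theorem}. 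So the mechanism on the intermediate range is ultimately the same in both arguments (a bounded profile against a right-hand side bounded below), but your route is softer and more modular --- it needs only analyticity of the H functions and the scaling structure, not uniform control of expansion errors --- whereas the paper's route makes a separate proof of the corollary unnecessary. One attribution to fix: the identification $D_x^n\Delta^{\gamma}\mathbb{D}_t^{\sigma}p=D_x^np_{\sigma,\gamma}$ that your reduction relies on is proved in Section \ref{sec: proof of main theorem} (the Fourier computation, with the Fubini issue for $d=1$, $\gamma=\beta$ that you correctly flag), not in Section \ref{sec:representation of solution}; you are nonetheless entitled to it, since it underlies the theorems you assume.
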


The proofs of Theorems  \ref{thm:main result 1}, \ref{thm:derivative of kernel} and \ref{cor:main result 1}  are  given in Section \ref{sec: proof of main theorem}.

\begin{rem}
Let $\alpha=1$ and $\beta\in (0,\infty)$, and $\sigma=0$. Then by Theorem \ref{cor:main result 1},
$$
\left|D_{x}^{n}\Delta^{\gamma}p(t,x)\right|\apprle\begin{cases}
t|x|^{-d-2\beta-2\gamma-n} & :\gamma\in\mathbb{Z}_{+}\\
|x|^{-d-2\gamma-n} & :\gamma\notin\mathbb{Z}_{+}
\end{cases}
$$
holds for $|x|^{2}\geq t$. Also, by Theorem  \ref{cor:main result 1}, for $|x|^{2}\leq t$,
$$
\left|\Delta^{\gamma}p(t,x)\right| \apprle t^{-\frac{d+2\gamma}{2\beta}},\quad \left|D_{x}^{n}\Delta^{\gamma}p(t,x)\right| \apprle |x|^{2-n}t^{-\frac{d+2\gamma+2}{2\beta}}.
$$
These estimates cover the results of \cite[Lemma 3.1, 3.3]{Kim2012} and \cite[Corollary 1]{jo2004precise}.
\end{rem}

\begin{rem}
Theorem \ref{cor:main result 1}
also implies the results of \cite[Proposition 5.1, 5.2]{EIK}.
Let $\beta=1$ and take a $|n|\geq1$. For $|x|^{2} \geq t^{\alpha}$.
\begin{align*}
\left|D_{x}^{n}\mathbb{D}_{t}^{\sigma}p(t,x)\right| &\apprle |x|^{-d-n}t^{-\sigma}\exp\left\{-c t^{-\frac{\alpha}{2-\alpha}}|x|^{\frac{2}{2-\alpha}}\right\}.
\\ &\apprle t^{-\frac{\alpha (d+n)}{2}-\sigma}\exp\left\{-c t^{-\frac{\alpha}{2-\alpha}}|x|^{\frac{2}{2-\alpha}}\right\}.
\end{align*}
Also, (cf. \eqref{eq:asy 1 small}, \eqref{eq:asy 2 small}, and \eqref{eq:asy 4 small}),
$$
|p(t,x)|\apprle\begin{cases}
t^{-\frac{\alpha d}{2}} & :d\geq 3\\
t^{-\alpha}(1+|\ln|x|^{2}t^{-\alpha}|) & :d=2\\
|x|^{-d+2}t^{-\alpha} & :d=1
\end{cases}
$$
and
$$
\left|D_{x}^{n}p(t,x)\right|\apprle|x|^{-d+2-n}t^{-\alpha},\quad \left|D_{x}^{n}\Delta p(t,x)\right|\apprle|x|^{-d+2-n}t^{-2\alpha},
$$
$$
\left|D_{x}^{n}\mathbb{D}_{t}^{1-\alpha}p(t,x)\right|\apprle\begin{cases}
|x|^{-d+4-n}t^{-\alpha-1} & :d\geq 3\\
|x|^{2-n}t^{-1}(1+|\ln|x|^{2}t^{-\alpha}|) & :d=2\\
|x|^{1-n}t^{-1} & :d=1
\end{cases}
$$
hold for $|x|^{2}\leq t^{\alpha}$.
\end{rem}

\section{\label{sec:Preliminaries}The Fox H function}

In this section,  we introduce the definition and some properties of  the Fox H function. We refer to \cite{kilbas2004h} for further information.

\subsection{Definition}

Let $\Gamma(z)$ denote the gamma function which can be defined (see
\cite[Section 1.1]{Gamma}) for $z\in\mathbb{C}\setminus\{0,-1,-2,\ldots\}$
as
$$
\Gamma(z)=\lim_{n\rightarrow\infty}\frac{n!n^{z}}{z(z+1)\cdots(z+n)}.
$$
Note that $\Gamma(z)$ is a meromorphic function with simple poles
at the nonpositive integers. From the definition, for $z\in\mathbb{C}\setminus\{0,-1,-2,\ldots\}$,
\begin{align}
					\label{eq:zGamma(z)=Gamma(z+1)}
z\Gamma(z)=\Gamma(z+1),
\end{align}
and it holds that
\begin{align}
					\label{eq:gamma property}
\Gamma(1-z)\Gamma(z)=\frac{\pi}{\sin\pi z},\quad \prod_{k=0}^{m-1}\Gamma(z+\frac{k}{m})=(2\pi)^{\frac{m-1}{2}}m^{\frac{1}{2}-mz}\Gamma(mz).
\end{align}
One can easily check that for $k\in\mathbb{Z}_{+}$,
$$
\textnormal{Res}_{z=-k}[\Gamma(z)]=\lim_{z\rightarrow-k}(z+k)\Gamma(z)=\frac{(-1)^{k}}{k!},
$$
where $\textnormal{Res}_{z=z_{0}}[f(z)]$ denotes the residue of $f(z)$
at $z=z_{0}$. From the Stirling's approximation
$$
\Gamma(z)\sim\sqrt{2\pi}e^{(z-\frac{1}{2})\log z}e^{-z},\quad|z|\rightarrow\infty,
$$
it follows that
\begin{align}
					\label{eq:gamma-horizantal}
|\Gamma(a+\textnormal{i}b)|\sim\sqrt{2\pi}|a|^{a-\frac{1}{2}}e^{-a-\pi[1-\textnormal{sign}(a)]b/2}, \quad  |a|\rightarrow\infty
\end{align}
and
\begin{align}
					\label{eq:gamma-vertical}
|\Gamma(a+\textnormal{i}b)|\sim\sqrt{2\pi}|b|^{a-\frac{1}{2}}e^{-a-\frac{\pi|b|}{2}}, \quad  |b|\rightarrow\infty.
\end{align}

Let $m,n,\nu,\mu$ be fixed integers satisfying $0\leq m\leq\mu$,
$0\leq n\leq\nu$. Assume that real parameters $\mathfrak{c}_{1},\ldots,\mathfrak{c}_{\nu}$,
$\mathfrak{d}_{1},\ldots,\mathfrak{d}_{\mu}$ and positive real parameters
$\gamma_{1},\ldots,\gamma_{\nu}$, $\delta_{1},\ldots,\delta_{\mu}$
are given such that
\begin{align}
				\label{eq:P1 P2}
\max_{1\leq j\leq m}\left(-\frac{\mathfrak{d}_{j}}{\delta_{j}}\right)<\min_{1\leq j\leq n}\left(\frac{1-\mathfrak{c}_{j}}{\gamma_{j}}\right).
\end{align}
For each $k\in\mathbb{Z}_{+}$, we set
$$
\mathfrak{c}_{j,k}=\frac{1-\mathfrak{c}_{j}+k}{\gamma_{j}},\quad\mathfrak{d}_{j,k}=-\frac{\mathfrak{d}_{j}+k}{\delta_{j}},
$$
which constitute
$$
P_{1}=\left\{ \mathfrak{d}_{j,k}\in\mathbb{R}:j\in\{1,\ldots,m\},k\in\mathbb{Z}_{+}\right\} ,
$$
$$
P_{2}=\left\{ \mathfrak{c}_{j,k}\in\mathbb{R}:j\in\{1,\ldots,n\},k\in\mathbb{Z}_{+}\right\} .
$$
Note that $P_{1}\cap P_{2}=\emptyset$ by (\ref{eq:P1 P2}).  We
arrange the elements of $P_{1}$ and $P_{2}$  as follows:
\begin{align}
					\label{eq:rearrangement}
P_{1}=\left\{\hat{\mathfrak{d}}_{0}  >\hat{\mathfrak{d}}_{1}>\hat{\mathfrak{d}}_{2}>\cdots\right\} ,\quad P_{2}=\left\{\hat{\mathfrak{c}}_{0}<\hat{\mathfrak{c}}_{1}<\hat{\mathfrak{c}}_{2}<\cdots\right\} .
\end{align}
For the above parameters, define
$$
\mathcal{H}(z):=\frac{\prod_{j=1}^{m}\Gamma(\mathfrak{d}_{j}+\delta_{j}z)\prod_{j=1}^{n}\Gamma(1-\mathfrak{c}_{j}-\gamma_{j}z)}{\prod_{j=n+1}^{\nu}\Gamma(\mathfrak{c}_{j}+\gamma_{j}z)\prod_{j=m+1}^{\mu}\Gamma(1-\mathfrak{d}_{j}-\delta_{j}z)}.
$$
Note that $P_{1}$ and $P_{2}$ are sets of poles of $\mathcal{H}(z)$. To
describe the behavior of $\mathcal{H}(z)$ as $|z|\rightarrow\infty$,
we set
$$
\alpha^{*}:=\sum_{i=1}^{n}\gamma_{i}-\sum_{i=n+1}^{\nu}\gamma_{i}+\sum_{j=1}^{m}\delta_{j}-\sum_{j=m+1}^{\mu}\delta_{j},
$$
and
$$
\Lambda:=\sum_{j=1}^{\mu}\mathfrak{d}_{j}-\sum_{j=1}^{\nu}\mathfrak{c}_{j}+\frac{\nu-\mu}{2},\quad\omega:=\sum_{j=1}^{\mu}\delta_{j}-\sum_{j=1}^{\nu}\gamma_{i},\quad\eta:=\prod_{j=1}^{\nu}\gamma_{j}^{-\gamma_{j}}\prod_{j=1}^{\mu}\delta_{j}^{\delta_{j}}.
$$
Due to (\ref{eq:gamma-horizantal}),
\begin{align}
				\label{eq:Stirling approx horizantal+}
|\mathcal{H}(a+\textnormal{i}b)r^{-a-\textnormal{i}b}|\sim\left(\frac{e}{a}\right)^{-\omega a}\left(\frac{\eta}{r}\right)^{a}a^{\Lambda},\quad r\in(0,\infty)
\end{align}
as $a\rightarrow\infty$ and
\begin{align}
					\label{eq:Stirling approx horizantal-}
|\mathcal{H}(a+\textnormal{i}b)r^{-a-\textnormal{i}b}|\sim\left(\frac{e}{|a|}\right)^{\omega|a|}\left(\frac{r}{\eta}\right)^{-|a|}|a|^{\Lambda},\quad r\in(0,\infty)
\end{align}
as $a\rightarrow-\infty$. By (\ref{eq:gamma-vertical}), it follows
that
\begin{align}
					\label{eq:Stirling approx vertical}
|\mathcal{H}(a+\textnormal{i}b)r^{-a-\textnormal{i}b}|\sim\left(\frac{e}{|b|}\right)^{-\omega a}\left(\frac{\eta}{r}\right)^{a}|b|^{\Lambda}e^{-\alpha^{*}|b|\pi/2},\quad r\in(0,\infty)
\end{align}
as $|b|\rightarrow\infty$.

The Fox H funciton $\textnormal{H}_{\nu\mu}^{mn}(r)$ $(r>0)$ is defined
via Mellin-Barnes type integral in the form
\begin{align}
\textnormal{H}_{\nu\mu}^{mn}(r) & :=\textnormal{H}_{\nu\mu}^{mn}\left[r\ \Bigg|\begin{array}{c}
[\mathfrak{c},\gamma]\\
{}[\mathfrak{d},\delta]
\end{array}\right]\nonumber \\
 & :=\textnormal{H}_{\nu\mu}^{mn}\left[r\ \Big|\begin{array}{ccc}
(\mathfrak{c}_{1},\gamma_{1}) & \cdots & (\mathfrak{c}_{\nu},\gamma_{\nu})\\
(\mathfrak{d}_{1},\delta_{1}) & \cdots & (\mathfrak{d}_{\mu},\delta_{\mu})
\end{array}\right]:=\frac{1}{2\pi\textnormal{i}}\int_{L}\mathcal{H}(z)r^{-z}dz.
                                                                                               \label{eq:H function}
\end{align}
In (\ref{eq:H function}), $L$ is the infinite contour which separates
all the poles in $P_{1}$ to the left and all the poles in $P_{2}$
to the right of $L$. Precisely,  we  choose $L$  as follows:
\begin{enumerate}
\item if $\omega>0$, then $L=L_{Ha}^{-}$, which is a left loop situated
in a horizantal strip (or left Hankel contour), runs from $-\infty+\textnormal{i}h_{1}$
to $\ell+\textnormal{i}h_{1}$, and then to $\ell+\textnormal{i}h_{2}$
and finally terminates at the point $-\infty+\textnormal{i}h_{2}$
with $-\infty<h_{1}<0<h_{2}<\infty$ and
\begin{align}
				\label{eq:ell condition}
\hat{\mathfrak{d}}_{0}<\ell<\hat{\mathfrak{c}}_{0},
\end{align}

\item if $\omega<0$, then $L=L_{Ha}^{+}$, which is a right loop situated
in a horizatal strip (or right Hankel contour), runs from $+\infty+\textnormal{i}h_{1}$
to $\ell+\textnormal{i}h_{1}$, and then to $\ell+\textnormal{i}h_{2}$
and finally terminates at the point $+\infty+\textnormal{i}h_{2}$
with $-\infty<h_{1}<0<h_{2}<\infty$ and (\ref{eq:ell condition}),
\item if $\omega=0$, then $L=L_{Ha}^{-}$ and $L=L_{Ha}^{+}$ and for $r\in(0,\eta)$
and $r\in(\eta,\infty)$ respectively.
\end{enumerate}

The following proposition shows that  integral (\ref{eq:H function}) is well defined and
 independent of the choice of $h_{1},h_{2}$, and $\ell\in(\hat{\mathfrak{d}}_{0},\hat{\mathfrak{c}}_{0})$.

\begin{prop}[{\cite[Theorem 1.2]{kilbas2004h}}]
										\label{prop:H function residue}
Assume (\ref{eq:P1 P2}) and choose the contour $L$ as above. Then Mellin-Barnes integral (\ref{eq:H function}) makes sense and it is an analytic
function of $r\in(0,\infty)$ and of $r\in(0,\eta)\cup(\eta,\infty)$
if $\omega\neq0$ and $\omega=0$ respectively. Furthermore,\end{prop}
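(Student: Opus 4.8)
\emph{Proof proposal.} The plan is to read the Mellin--Barnes integral \eqref{eq:H function} as an absolutely convergent integral along the Hankel contour $L$ and then to deduce analyticity from locally uniform convergence. First I would record that the integrand $\mathcal{H}(z)r^{-z}$ has no poles on $L$: the poles of $\mathcal{H}$ form the disjoint sets $P_1,P_2$ by \eqref{eq:P1 P2}, and $L$ was chosen to separate them while its vertical part sits in $(\hat{\mathfrak{d}}_0,\hat{\mathfrak{c}}_0)$ by \eqref{eq:ell condition}; being a ratio of products of $\Gamma$'s composed with affine maps, $\mathcal{H}$ is meromorphic and hence analytic in a neighborhood of $L$. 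I would then split $L$ into its finite vertical segment, on which the integral of a continuous function over a compact set is trivially finite, and the two infinite horizontal rays, on which $\Re z=a\to\mp\infty$ while $\Im z$ stays bounded.

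The crux is the decay of $|\mathcal{H}(z)r^{-z}|$ on these rays, which I would control by the Stirling-type estimates \eqref{eq:Stirling approx horizantal+} and \eqref{eq:Stirling approx horizantal-}. When $\omega\neq0$ the dominant factor is $(e/|a|)^{\mp\omega|a|}=\exp\{\mp\omega|a|(1-\ln|a|)\}$, which decays super-exponentially as $|a|\to\infty$ along the ray selected by the sign of $\omega$ (the left loop for $\omega>0$, the right loop for $\omega<0$); this dominates both the algebraic factor $|a|^{\Lambda}$ and the geometric factor, giving absolute convergence for every fixed $r\in(0,\infty)$. When $\omega=0$ this cushion is absent and convergence rests on the geometric factor alone: on the right loop $L_{Ha}^{+}$ it behaves like $(\eta/r)^{a}$, which decays exactly when $r>\eta$, and on the left loop $L_{Ha}^{-}$ the corresponding factor decays exactly when $r<\eta$, which is precisely why each loop is paired with the stated interval of $r$. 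In every case the integral converges absolutely, so \eqref{eq:H function} is well defined.

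Independence of the auxiliary data $h_1,h_2,\ell$ I would obtain by contour deformation: given two admissible contours, apply Cauchy's theorem on the closed curve built from their finite portions, noting that no pole is crossed in passing from one to the other (all poles are real and lie outside the swept region by \eqref{eq:ell condition}); the far connecting segments contribute terms that vanish in the limit by the same super-exponential or geometric decay, so the two values agree. For analyticity in $r$ I would observe that the bounds above are uniform for $r$ in any compact subset of $(0,\infty)$ (respectively of $(0,\eta)$ or $(\eta,\infty)$ when $\omega=0$); writing $r^{-z}=e^{-z\log r}$ shows the integrand is holomorphic in $r$, and since the truncated integrals converge locally uniformly, the limit is analytic---either by the Weierstrass/Morera theorem on uniform limits of analytic functions, or by differentiating under the integral sign, the differentiated integrands (which gain only a polynomial factor $z$) enjoying the same decay.

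I expect the principal obstacle to be the convergence analysis on the infinite rays, especially the borderline case $\omega=0$: there the super-exponential margin disappears and convergence must be extracted from the geometric factor alone, which forces the precise matching of each Hankel loop with its interval of $r$; establishing these bounds uniformly on compact $r$-subsets is also exactly what delivers the analyticity conclusion.
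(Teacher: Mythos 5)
Your treatment of the clause you address --- that \eqref{eq:H function} converges absolutely and is analytic in $r$ --- is correct and follows essentially the same route as the paper: split the Hankel contour into its compact vertical piece and the two horizontal rays, and control the rays by the Stirling-type bounds \eqref{eq:Stirling approx horizantal+}--\eqref{eq:Stirling approx horizantal-}, exactly as in \eqref{eq:abso 1}--\eqref{eq:abso 3} and \eqref{eq:abso 2-1}--\eqref{eq:abso 3-1}. Your trichotomy is the right one: super-exponential decay from $(e/|a|)^{\pm\omega|a|}$ when $\omega\neq0$, and, when $\omega=0$, the geometric factor forcing $r<\eta$ on $L_{Ha}^{-}$ and $r>\eta$ on $L_{Ha}^{+}$. (In fact your pairing is the correct one even though the displayed estimate \eqref{eq:Stirling approx horizantal-}, read literally, has the sign of the $r$-exponent reversed; the paper's contour convention and part (iii) of its proof are consistent with your reading, not with the literal formula.) Your Morera/locally-uniform-convergence argument for analyticity and the deformation argument for independence of $h_{1},h_{2},\ell$ (no pole is swept since all poles are real and separated by \eqref{eq:ell condition}) are sound, and they spell out details the paper itself delegates to the citation \cite[Theorem 1.2]{kilbas2004h}.

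However, the proposition does not stop at the dangling ``Furthermore'': it continues with assertions (i)--(iii), namely the residue expansions \eqref{eq:residue positive w} and \eqref{eq:residue negative w}, and these are where the paper's proof does most of its work and what the paper actually uses afterwards (e.g.\ in Lemma \ref{lem:H small r} and Proposition \ref{prop:asymptotic expansion}). The paper proves them by applying the residue theorem to an expanding rectangle $C_{M}$ whose moving side is placed between consecutive poles ($-\hat{\mathfrak{d}}_{p}<M<-\hat{\mathfrak{d}}_{p+1}$, cf.\ \eqref{eq:M}), using \eqref{eq:abso 2}--\eqref{eq:abso 3} to show that the contributions of the far sides vanish as $M\to\infty$, and then introducing a second rectangle $C_{M}'$ to show that the tail $\sum_{k=p+1}^{q}\textnormal{Res}_{z=\hat{\mathfrak{d}}_{k}}[\mathcal{H}(z)r^{-z}]$ tends to zero, i.e.\ that the residue series genuinely converges to the integral. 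Your proposal is silent on all of this, so relative to the full proposition there is a genuine gap --- although the absolute-convergence estimates you set up are precisely the ingredients the missing residue-theorem argument would consume.
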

\begin{enumerate}
\item if $\omega>0$, then
\begin{align}
					\label{eq:residue positive w}
\textnormal{H}_{\nu\mu}^{mn}(r)=\sum_{k=0}^{\infty}\textnormal{Res}_{z=\hat{\mathfrak{d}}_{k}}\left[\mathcal{H}(z)r^{-z}\right]
\end{align}

\item if $\omega<0$, then
\begin{align}
				\label{eq:residue negative w}
\textnormal{H}_{\nu\mu}^{mn}(r)=-\sum_{k=0}^{\infty}\textnormal{Res}_{z=\hat{\mathfrak{c}}_{k}}\left[\mathcal{H}(z)r^{-z}\right]
\end{align}

\item if $\omega=0$, then (\ref{eq:residue positive w}) and (\ref{eq:residue negative w})
hold for $r\in(0,\eta)$ and $r\in(\eta,\infty)$ respectively.
\end{enumerate}

\begin{proof}
(i) Let $\omega>0$ and $r\in(0,\infty)$. Choose $h_{1}<0$, $h_{2}>0$,
and $\ell\in\mathbb{R}$ so that  (\ref{eq:ell condition}) holds. Take a sufficiently
large $p\in\mathbb{Z}_{+}$ so that $\hat{\mathfrak{d}}_{p}<0$. Then
there exists a real number $M=M(p)>0$ such that
\begin{align}
						\label{eq:M}
-\hat{\mathfrak{d}}_{p}<M<-\hat{\mathfrak{d}}_{p+1}.
\end{align}
Define a closed rectangular contour $C^{M}$ which can be decomposed
into four lines
$$
C_{M}=L_{1}\cup L_{2}\cup L_{3}\cup L_{4}
$$
where
$$
L_{1}:=\left\{ z\in\mathbb{C}:\Re[z]=\ell,h_{1}\leq\Im[z]\leq h_{2}\right\} ,
$$
$$
L_{2}:=\left\{ z\in\mathbb{C}:\Re[z]=-M,h_{1}\leq\Im[z]\leq h_{2}\right\} ,
$$
$$
L_{3}:=\left\{ z\in\mathbb{C}:-M\leq\Re[z]\leq\ell,\Im[z]=h_{1}\right\} ,
$$
$$
L_{4}:=\left\{ z\in\mathbb{C}:-M\leq\Re[z]\leq\ell,\Im[z]=h_{2}\right\} .
$$
Note that $\mathcal{H}(z)r^{-z}$ is a meromorphic function on $z\in\mathbb{C}\setminus P_{1}\cup P_{2}$ and
\begin{align}
					\label{eq:abso 1}
\int_{C^{M}}\left|\mathcal{H}(z)r^{-z}\right||dz|<\infty.
\end{align}
Due to (\ref{eq:Stirling approx horizantal-}) ($i=1,2$)
\begin{align}
\left|\int_{-\infty}^{-M}\mathcal{H}(t+\textnormal{i}h_{i})r^{-t-\textnormal{i}h_{i}}dt\right| & \leq\int_{-\infty}^{-M}\left|\mathcal{H}(t+\textnormal{i}h_{i})r^{-t-\textnormal{i}h_{i}}\right|dt\nonumber \\
 & \apprle\int_{M}^{\infty}\left(\frac{e}{t}\right)^{\omega t}\left(\frac{r}{\eta}\right)^{-t}t^{\Lambda}dt<\infty,
                  \label{eq:abso 2}
\end{align}
and
\begin{align}
\left|\int_{L_{2}}\mathcal{H}(z)r^{-z}dz\right| & =\bigg|\int_{h_{1}}^{h_{2}}\mathcal{H}\left(-M+\textnormal{i}t\right)r^{M-\textnormal{i}t}dt\bigg|\nonumber \\
 & \leq\int_{h_{1}}^{h_{2}}\left|\mathcal{H}(-M+\textnormal{i}t)r^{M-\textnormal{i}t}\right|dt\nonumber \\
 & \apprle\left(\frac{e}{M}\right)^{\omega M}\left(\frac{r}{\eta}\right)^{M}M^{\Lambda}(h_{2}-h_{1})<\infty.\label{eq:abso 3}
\end{align}
By (\ref{eq:abso 1})-(\ref{eq:abso 3}),  integral (\ref{eq:H function})
absolutely converges and makes sense with $L=L_{Ha}^{-}$.

We next show (\ref{eq:residue positive w}). Note that (\ref{eq:abso 2})
and (\ref{eq:abso 3}) converge to 0 as $M\rightarrow\infty$ and
by the theory of residues,
$$
\frac{1}{2\pi\textnormal{i}}\int_{C_{M}}\mathcal{H}(z)r^{-z}dz=\sum_{k=0}^{p}\textnormal{Res}_{z=\hat{\mathfrak{d}}_{k}}\left[\mathcal{H}(z)r^{-z}\right].
$$
Thus our claim follows immediately if we prove the convergence of the
residue expansion. Let $q\in\mathbb{N}$ $(q>p)$ be arbitrarily given. Then
 we can choose a real number $N=N(q)>0$ satisfying (\ref{eq:M})
where $p$ and $M$ are replaced by $q$ and $N$ respectively. Set
$$
C_{M}':=L_{1}'\cup L_{2}\cup L_{3}'\cup L_{4}'
$$
where
$$
L_{1}':=\left\{ z\in\mathbb{C}:\Re[z]=-N,h_{1}\leq\Im[z]\leq h_{2}\right\} ,
$$
$$
L_{3}':=\left\{ z\in\mathbb{C}:-N\leq\Re[z]\leq-M,\Im[z]=h_{1}\right\} ,
$$
$$
L_{4}':=\left\{ z\in\mathbb{C}:-N\leq\Re[z]\leq-M,\Im[z]=h_{2}\right\} .
$$
Observe that
\begin{align}
				\label{eq:partial sum}
\frac{1}{2\pi\textnormal{i}}\int_{C_{M}'}\mathcal{H}(z)r^{-z}dz=\sum_{k=p+1}^{q}\textnormal{Res}_{z=\hat{\mathfrak{d}}_{k}}\left[\mathcal{H}(z)r^{-z}\right].
\end{align}
By replacing $M$ by $N$ in (\ref{eq:abso 2}) and (\ref{eq:abso 3}),
$$
\lim_{N\rightarrow\infty}\left|\int_{L_{1}'}\mathcal{H}(z)r^{-z}dz\right|=\lim_{N,M\rightarrow\infty}\left|\int_{L_{3}'\cup L_{4}'}\mathcal{H}(z)r^{-z}dz\right|=0,
$$
which implies
\begin{align*}
\lim_{p,q\rightarrow\infty}\sum_{k=p+1}^{q}\textnormal{Res}_{z=\hat{\mathfrak{d}}_{k}}\left[\mathcal{H}(z)r^{-z}\right] & =\lim_{N,M\rightarrow\infty}\frac{1}{2\pi\textnormal{i}}\int_{C_{M}'}\mathcal{H}(z)r^{-z}dz=0.
\end{align*}
Thus  (\ref{eq:residue positive w}) is proved.

(ii) The case $\omega<0$
is an analogue of the case $\omega>0$. By (\ref{eq:Stirling approx horizantal+}),
for sufficiently large $M>0$ ($i=1,2$)
\begin{align}
\left|\int_{M}^{\infty}\mathcal{H}(t+\textnormal{i}h_{i})r^{-t-\textnormal{i}h_{i}}dt\right| & \leq\int_{M}^{\infty}\left|\mathcal{H}(t+\textnormal{i}h_{i})r^{-t-\textnormal{i}h_{i}}\right|dt\nonumber \\
 & \apprle\int_{M}^{\infty}\left(\frac{e}{t}\right)^{-\omega t}\left(\frac{\eta}{r}\right)^{t}t^{\Lambda}dt<\infty,
                  \label{eq:abso 2-1}
\end{align}
and
\begin{align}
\left|\int_{L_{2}}\mathcal{H}(z)r^{-z}dz\right| & =\bigg|\int_{h_{1}}^{h_{2}}\mathcal{H}\left(M+\textnormal{i}t\right)r^{M-\textnormal{i}t}dt\bigg|\nonumber \\
 & \leq\int_{h_{1}}^{h_{2}}\left|\mathcal{H}(M+\textnormal{i}t)r^{M-\textnormal{i}t}\right|dt\nonumber \\
 & \apprle\left(\frac{e}{M}\right)^{-\omega M}\left(\frac{\eta}{r}\right)^{M}M^{\Lambda}(h_{2}-h_{1})<\infty.\label{eq:abso 3-1}
\end{align}
Note that both \eqref{eq:abso 2-1} and \eqref{eq:abso 3-1} converge to 0 as $M\rightarrow\infty$. Then we obtain our desired result by replacing $\hat{\mathfrak{d}}_{k}$
and $M$ in the proof of the case $\omega>0$ by $\hat{\mathfrak{c}}_{k}$
and $-M$ respectively.

(iii) Finally we consider $\omega=0$. Note
that (\ref{eq:abso 2}) and (\ref{eq:abso 3}) hold if $r<\eta$,
consequently (\ref{eq:residue positive w}) follows. If $r>\eta$,
then we have (\ref{eq:abso 2-1}) and (\ref{eq:abso 3-1}) which give
(\ref{eq:residue negative w}) immediately. The proposition is proved.
\end{proof}

In the remainder of this section, we assume
\begin{align}	
						\label{eq:parameter}
\alpha^{*}>0.
\end{align}
Under (\ref{eq:parameter}), we can choose a contour $L=L_{Br}$ which
is a vertical contour (or Bromwich contour) starting at the point
$\ell-\textnormal{i}\infty$ and terminating at the point $\ell+\textnormal{i}\infty$
where $\ell$ satisfies (\ref{eq:ell condition}).
Actually, $\textnormal{H}_{\nu\mu}^{mn}(r)$ does not depend on the choice
of $L$ due to the following proposition and  it is an analytic function
of $r\in(0,\infty)$ (it is a holomorphic function of $r\in\mathbb{C}$
in the sector $|\arg r|<\frac{\omega\pi}{2}$. See \cite[Theorem 1.2.(iii)]{kilbas2004h}).

\begin{prop}
										\label{prop:Bromwich=00003DHankel}
Under (\ref{eq:parameter}), Mellin-Barnes integral (\ref{eq:H function})
makes sense with $L=L_{Br}$. Furthermore, for $r\in(0,\infty)$
\begin{align}
					\label{eq:Br = positive}
\frac{1}{2\pi\textnormal{i}}\int_{L_{Br}}\mathcal{H}(z)r^{-z}dz=\frac{1}{2\pi\textnormal{i}}\int_{L_{Ha}^{-}}\mathcal{H}(z)r^{-z}dz
\end{align}
if $\omega>0$,
\begin{align}
					\label{eq:Br = negative}
\frac{1}{2\pi\textnormal{i}}\int_{L_{Br}}\mathcal{H}(z)r^{-z}dz=\frac{1}{2\pi\textnormal{i}}\int_{L_{Ha}^{+}}\mathcal{H}(z)r^{-z}dz
\end{align}
if $\omega<0$. If $\omega=0$ then (\ref{eq:Br = positive})
and (\ref{eq:Br = negative}) hold for $r<\eta$ and $r>\eta$
respectively.
\end{prop}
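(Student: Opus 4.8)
The plan is to first establish convergence of the Bromwich integral using the vertical decay furnished by $\alpha^{*}>0$, and then to identify it with the appropriate Hankel integral by a contour-deformation (Cauchy's theorem) argument, treating the sign of $\omega$ exactly as in the proof of Proposition \ref{prop:H function residue}. Throughout fix $\ell$ satisfying \eqref{eq:ell condition} and realize $L_{Br}$ as the vertical line $\Re[z]=\ell$; choose the Hankel contour so that its vertical portion lies on $\Re[z]=\ell$ with $\Im[z]\in[h_{1},h_{2}]$ and $-\infty<h_{1}<0<h_{2}<\infty$. The crucial structural fact is that every pole in $P_{1}\cup P_{2}$ is real (the numbers $\mathfrak{c}_{j,k},\mathfrak{d}_{j,k}$ are real), hence lies on $\{\Im[z]=0\}$ and off the line $\Re[z]=\ell$; in particular there are no poles of $\mathcal{H}(z)r^{-z}$ in the closed half-planes $\Im[z]\ge h_{2}$ and $\Im[z]\le h_{1}$.

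For convergence, on $\Re[z]=\ell$ the vertical estimate \eqref{eq:Stirling approx vertical} bounds $|\mathcal{H}(\ell+\textnormal{i}b)r^{-\ell-\textnormal{i}b}|$, for $|b|$ large, by $C|b|^{\Lambda}e^{-\alpha^{*}\pi|b|/2}$. Since $\alpha^{*}>0$, this is integrable in $b$, so $\int_{L_{Br}}|\mathcal{H}(z)r^{-z}|\,|dz|<\infty$ and the Mellin--Barnes integral makes sense with $L=L_{Br}$. This is the only step that genuinely uses the standing assumption \eqref{eq:parameter}; the Hankel integrals converge by the horizontal estimates alone, as already shown.

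Now suppose $\omega>0$. Since $L_{Br}$ and $L_{Ha}^{-}$ share the segment $\{\Re[z]=\ell,\ h_{1}\le\Im[z]\le h_{2}\}$, their difference is concentrated on the two wings $\Im[z]\ge h_{2}$ and $\Im[z]\le h_{1}$. On the upper wing apply Cauchy's theorem to the pole-free rectangle $[-M,\ell]\times[h_{2},T]$. Letting $M\to\infty$, the side on $\Re[z]=-M$ tends to $0$ by \eqref{eq:Stirling approx horizantal-} (its decay is forced by $\omega>0$, exactly as in \eqref{eq:abso 3}, the $\Im[z]$-range being bounded), while the two horizontal sides converge by the same estimate; letting then $T\to\infty$, the side on $\Re[z]=\ell$ produces the upper part of $\int_{L_{Br}}$ by the vertical estimate, and the residual horizontal side at height $T$ vanishes. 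Thus the upper-wing parts of $\int_{L_{Br}}$ and $\int_{L_{Ha}^{-}}$ coincide; the lower wing is handled identically on $[-M,\ell]\times[-T,h_{1}]$. Adding the two wings and the common segment yields \eqref{eq:Br = positive}. The case $\omega<0$ is the mirror image: one closes to the right on $[\ell,M]\times[h_{2},T]$ and $[\ell,M]\times[-T,h_{1}]$, the side on $\Re[z]=M$ now decaying by \eqref{eq:Stirling approx horizantal+} because $\omega<0$ (cf. \eqref{eq:abso 2-1}, \eqref{eq:abso 3-1}), giving \eqref{eq:Br = negative}. When $\omega=0$ the horizontal decay is no longer automatic but is dictated by $r$ versus $\eta$: for $r<\eta$ the leftward closure is justified and gives \eqref{eq:Br = positive}, while for $r>\eta$ the rightward closure gives \eqref{eq:Br = negative}, in both cases reusing precisely \eqref{eq:abso 2}, \eqref{eq:abso 3} and \eqref{eq:abso 2-1}, \eqref{eq:abso 3-1}.

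The main obstacle lies in the case $\omega>0$ (and symmetrically $\omega<0$): when $T\to\infty$ the residual horizontal side runs along $\Im[z]=T$ with $\Re[z]$ decreasing to $-\infty$, so both $|\Re[z]|$ and $|\Im[z]|$ are large there and neither one-variable asymptotic \eqref{eq:Stirling approx horizantal-} nor \eqref{eq:Stirling approx vertical} applies verbatim. To dispose of it one needs the full two-variable Stirling expansion of $\log\mathcal{H}$, which simultaneously exhibits the horizontal decay produced by $\omega>0$ and the factor $e^{-\alpha^{*}\pi|\Im[z]|/2}$ produced by $\alpha^{*}>0$; their product decays and the corner contribution tends to $0$. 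Making this uniform—for instance by splitting the segment at a fixed abscissa $a_{0}$, treating the compact part $a\in[a_{0},\ell]$ by \eqref{eq:Stirling approx vertical} and the tail $a\le a_{0}$ by the combined bound—is the only genuinely delicate point, and it is exactly what forces $\alpha^{*}>0$ to be used in tandem with the sign of $\omega$.
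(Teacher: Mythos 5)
Your proof is correct, but it takes a genuinely different route from the paper's. The paper never deforms $L_{Br}$ into $L_{Ha}^{\mp}$ directly: after establishing absolute convergence on $L_{Br}$ from \eqref{eq:Stirling approx vertical} and $\alpha^{*}>0$ (exactly as you do), it applies the residue theorem to growing rectangles with vertices $-M\pm\textnormal{i}M$, $\ell\pm\textnormal{i}M$, shows that the three sides other than the Bromwich segment vanish as $M\to\infty$, and thereby proves
$\frac{1}{2\pi\textnormal{i}}\int_{L_{Br}}\mathcal{H}(z)r^{-z}dz=\sum_{k}\textnormal{Res}_{z=\hat{\mathfrak{d}}_{k}}\left[\mathcal{H}(z)r^{-z}\right]$;
equality with the Hankel integral then follows by citing Proposition \ref{prop:H function residue}, which identifies that integral with the same residue series. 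You instead observe that all poles are real, so the two wings between the vertical rays of $L_{Br}$ and the horizontal rays of $L_{Ha}^{-}$ are pole-free, and you transport one ray into the other by Cauchy's theorem on $[-M,\ell]\times[h_{2},T]$ and its mirror image, with no residues and no appeal to the series expansion. Your version is more economical: it does not require convergence of the residue series or Proposition \ref{prop:H function residue} at all, it decouples the two limits ($M\to\infty$ at fixed $T$, so the far vertical side only needs the horizontal estimate over a bounded $\Im[z]$-range), and it isolates cleanly where $\alpha^{*}>0$ versus the sign of $\omega$ (or $r\lessgtr\eta$) enter. What the paper's route buys is the identity ``Bromwich integral $=$ residue series'' itself, which is reused later (the proof of Proposition \ref{prop:asymptotic expansion} explicitly modifies this argument). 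Finally, the corner difficulty you flag---bounding $\mathcal{H}(z)r^{-z}$ where $|\Re[z]|$ and $|\Im[z]|$ are simultaneously large, on your residual side $\Im[z]=T$---is present in the paper as well, and in more places (its horizontal sides $\Im[z]=\pm M$ with $\Re[z]\in[-M,\ell]$, and its vertical side with $|\Im[z]|\le M$); the paper resolves it by reading \eqref{eq:Stirling approx horizantal-} and \eqref{eq:Stirling approx vertical} as bounds uniform in the second variable, which is precisely the reading your split-at-$a_{0}$ patch makes explicit, so your proof is no less rigorous than the paper's own.
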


\begin{proof}
We only prove the case $\omega\geq0$. The proof of the other case is almost same.  Fix $r\in(0,\infty)$
and
$$
L=L_{Br}=\left\{ z\in\mathbb{C}:\Re[z]=\ell\right\},
$$
where $\ell$ satisfies (\ref{eq:ell condition}). Note that the relation
$$
\left|\mathcal{H}(\ell+\textnormal{i}t)r^{-\ell-\textnormal{i}t}\right|\sim e^{-\omega}\eta^{\ell}r^{-\ell}|t|^{\omega\ell+\Lambda}\exp\left\{ -\frac{|t|\alpha^{*}}{2}\pi\right\}
$$
holds as $|t|\rightarrow\infty$ uniformly in $\ell$ by (\ref{eq:Stirling approx vertical}).
Hence the contour integral
$$
\frac{1}{2\pi\textnormal{i}}\int_{L_{Br}}\mathcal{H}(z)r^{-z}dz
$$
 absolutely converges and makes sense. Due to Proposition \ref{prop:H function residue},
we only need to show that
\begin{align}
					\label{eq:Bromwich residue}
\frac{1}{2\pi\textnormal{i}}\int_{L_{Br}}\mathcal{H}(z)r^{-z}dz=\sum_{k=0}^{\infty}\textnormal{Res}_{z=\hat{\mathfrak{d}}_{k}}\left[\mathcal{H}(z)r^{-z}\right].
\end{align}
Given (sufficiently large) $p\in\mathbb{Z}_{+}$, we  take $M$
as in the proof of Proposition \ref{prop:H function residue}. Define
a closed rectangular contour $C^{M}$ which can be decomposed into
four lines
$$
C^{M}=L_{v}^{M}\cup L_{v}^{-M}\cup L_{h}^{-M}\cup L_{h}^{M}
$$
where
$$
L_{v}^{M}:=\left\{ z\in\mathbb{C}:\Re[z]=\ell,\left|\Im[z]\right|\leq M\right\} ,
$$
$$
L_{v}^{-M}:=\left\{ z\in\mathbb{C}:\Re[z]=-M,\left|\Im[z]\right|\leq M\right\} ,
$$
$$
L_{h}^{M}:=\left\{ z\in\mathbb{C}:-M\leq\Re[z]\leq\ell,\Im[z]=M\right\} ,
$$
$$
L_{h}^{-M}:=\left\{ z\in\mathbb{C}:-M\leq\Re[z]\leq\ell,\Im[z]=-M\right\} .
$$
Then by the theorem of residues,
\begin{align}
					\label{eq: sequence of residue}
\frac{1}{2\pi\textnormal{i}}\int_{C^{M}}\mathcal{H}(z)r^{-z}dz=\sum_{k=0}^{p}\textnormal{Res}_{z=\hat{\mathfrak{d}}_{k}}\left[\mathcal{H}(z)r^{-z}\right].
\end{align}
By (\ref{eq:Stirling approx horizantal-}), for any $h\in\mathbb{R}$,
$$
\left|\mathcal{H}(t+\textnormal{i}h)r^{-t-\textnormal{i}h}\right|\sim\left(\frac{e}{|t|}\right)^{\omega|t|}\left(\frac{r}{\eta}\right)^{|t|}|t|^{\Lambda}
$$
as $t\rightarrow-\infty$. Thus
\begin{align*}
\lim_{M\rightarrow\infty}\int_{L_{v}^{-M}}\left|\mathcal{H}(z)r^{-z}\right||dz| & =\lim_{M\rightarrow\infty}\int_{-M}^{M}\left|\mathcal{H}(-M+\textnormal{i}h)r^{M-\textnormal{i}h}\right|dh\\
 & \apprle\lim_{M\rightarrow\infty}\int_{-M}^{M}\left(\frac{e}{M}\right)^{\omega M}\left(\frac{r}{\eta}\right)^{M}M^{\Lambda}dt\\
 & \leq\lim_{M\rightarrow\infty}2\left(\frac{e}{M}\right)^{\omega M}\left(\frac{r}{\eta}\right)^{M}M^{\Lambda+1}=0.
\end{align*}
On the other hand, by (\ref{eq:Stirling approx vertical}),
\begin{align*}
\lim_{M\rightarrow\infty}\int_{L_{h}^{M}}\left|\mathcal{H}(z)r^{-z}\right||dz| & =\lim_{M\rightarrow\infty}\int_{\ell}^{-M}\left|\mathcal{H}(t+\textnormal{i}M)r^{-t-\textnormal{i}M}\right|dt\\
 & \apprle\lim_{M\rightarrow\infty}\int_{\ell}^{-M}e^{-\omega t}\left(\frac{\eta}{r}\right)^{t}M^{\omega t+\Lambda}\exp\left\{ -\frac{\alpha^{*}M\pi}{2}\right\} dt\\
 & =\lim_{M\rightarrow\infty}\exp\left\{ -\frac{\alpha^{*}M\pi}{2}\right\} M^{\Lambda}\int_{\ell}^{-M}\left(\frac{\eta M^{\omega}}{re^{\omega}}\right)^{t}dt\\
 & =\lim_{M\rightarrow\infty}\exp\left\{ -\frac{\alpha^{*}M\pi}{2}\right\} M^{\Lambda}\cdot\frac{\left(\frac{\eta M^{\omega}}{re^{\omega}}\right)^{-M}-\left(\frac{\eta M^{\omega}}{re^{\omega}}\right)^{\ell}}{\ln\eta-\ln r+\omega(\ln M-1)}=0
\end{align*}
since $\alpha^{*}>0$, $\omega\geq0$, and $r<\eta$. Similarly,
$$
\lim_{M\rightarrow\infty}\int_{L_{h}^{-M}}\left|\mathcal{H}(z)r^{-z}\right||dz|=0.
$$
Thus, by taking $p\rightarrow\infty$ in (\ref{eq: sequence of residue}),
\begin{align*}
\sum_{k=0}^{\infty}\textnormal{Res}_{z=\hat{\mathfrak{d}}_{k}}\left[\mathcal{H}(z)r^{-z}\right] & =\lim_{M\rightarrow\infty}\frac{1}{2\pi\textnormal{i}}\int_{C^{M}}\mathcal{H}(z)r^{-z}dz\\
 & =\lim_{M\rightarrow\infty}\frac{1}{2\pi\textnormal{i}}\left(\int_{L_{v}^{M}}+\int_{L_{v}^{-M}}+\int_{L_{h}^{M}}+\int_{L_{h}^{-M}}\right)\\
 & =\lim_{M\rightarrow\infty}\frac{1}{2\pi\textnormal{i}}\int_{L_{v}^{M}}\mathcal{H}(z)r^{-z}dz=\frac{1}{2\pi\textnormal{i}}\int_{L_{Br}}\mathcal{H}(z)r^{-z}dz.
\end{align*}
Therefore (\ref{eq:Bromwich residue}) holds, and the proposition is proved.
\end{proof}

\begin{prop}[{\cite[(2.2.2)]{kilbas2004h}}]
								\label{prop:derivatives of H(r)}
\begin{align*}
\frac{d}{dr}\Bigg\{ & \textnormal{H}_{\nu\mu}^{mn}\left[r\ \Bigg|\begin{array}{c}
[\mathfrak{c},\gamma]\\
{}[\mathfrak{d},\delta]
\end{array}\right]\Bigg\}=-r^{-1}\textnormal{H}_{\nu+1\mu+1}^{m+1\ n}\left[r\ \Bigg|\begin{array}{cc}
[\mathfrak{c},\gamma] & (0,1)\\
(1,1) & [\mathfrak{d},\delta]
\end{array}\right].
\end{align*}

\end{prop}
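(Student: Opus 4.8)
The plan is to differentiate the Mellin--Barnes representation \eqref{eq:H function} under the integral sign and then, via the functional equation of the gamma function, recognize the resulting kernel as the defining integrand of the Fox H function on the right-hand side.

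First I would write
$$
\textnormal{H}_{\nu\mu}^{mn}(r)=\frac{1}{2\pi\textnormal{i}}\int_{L}\mathcal{H}(z)r^{-z}\,dz
$$
and differentiate in $r$. Since $\frac{d}{dr}r^{-z}=-zr^{-z-1}$, formally
$$
\frac{d}{dr}\textnormal{H}_{\nu\mu}^{mn}(r)=-r^{-1}\cdot\frac{1}{2\pi\textnormal{i}}\int_{L}z\,\mathcal{H}(z)r^{-z}\,dz.
$$
To legitimize moving $\frac{d}{dr}$ through the integral I would invoke the decay estimates of Section \ref{sec:Preliminaries}. On a Bromwich contour $L=L_{Br}$, estimate \eqref{eq:Stirling approx vertical} shows that $|\mathcal{H}(\ell+\textnormal{i}t)r^{-\ell-\textnormal{i}t}|$ decays like $|t|^{\omega\ell+\Lambda}e^{-\alpha^{*}|t|\pi/2}$; the extra factor $z=\ell+\textnormal{i}t$ in the differentiated integrand contributes only the polynomial weight $|t|$, which is absorbed by the exponential decay since $\alpha^{*}>0$. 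Hence, for $r$ in any compact subinterval of $(0,\infty)$, the integrand and its $r$-derivative are dominated by a fixed integrable function, so differentiation under the integral is valid; on Hankel contours one argues identically using \eqref{eq:Stirling approx horizantal-} and \eqref{eq:Stirling approx vertical}.

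Next I would identify $z\mathcal{H}(z)$. By the recurrence \eqref{eq:zGamma(z)=Gamma(z+1)} we have $z=\Gamma(1+z)/\Gamma(z)$, so
$$
z\,\mathcal{H}(z)=\frac{\Gamma(1+z)\prod_{j=1}^{m}\Gamma(\mathfrak{d}_{j}+\delta_{j}z)\prod_{j=1}^{n}\Gamma(1-\mathfrak{c}_{j}-\gamma_{j}z)}{\Gamma(z)\prod_{j=n+1}^{\nu}\Gamma(\mathfrak{c}_{j}+\gamma_{j}z)\prod_{j=m+1}^{\mu}\Gamma(1-\mathfrak{d}_{j}-\delta_{j}z)}.
$$
Reading off the parameters against the definition of $\mathcal{H}$, this is exactly the kernel of $\textnormal{H}_{\nu+1\mu+1}^{m+1\ n}$ with the pair $(0,1)$ appended at position $\nu+1$ in the top row --- lying beyond the first $n$ entries, so that it produces the denominator factor $\Gamma(0+1\cdot z)=\Gamma(z)$ --- and with the pair $(1,1)$ inserted at position $1$ in the bottom row, raising $m$ to $m+1$ and producing the numerator factor $\Gamma(1+1\cdot z)=\Gamma(1+z)$. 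Thus the last display is precisely the kernel of the Fox H function on the right-hand side, and combining with the previous step yields the asserted identity.

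The step I expect to require the most care is confirming that a single contour $L$ is admissible for both Fox H functions at once. The safeguard is that appending $(0,1)$ on top and $(1,1)$ on the bottom leaves the structural quantities invariant: a direct computation shows that $\alpha^{*}$ and $\omega$ are unchanged, so the prescription for choosing $L_{Br}$, $L_{Ha}^{-}$, or $L_{Ha}^{+}$ is the same for both functions. Moreover, since $\Gamma(1+z)/\Gamma(z)=z$ is entire, the factor introduces no new pole --- the would-be poles of $\Gamma(1+z)$ at $z=-1,-2,\dots$ are cancelled by the zeros of $1/\Gamma(z)$ --- so the genuine pole sets $P_{1}$ and $P_{2}$ coincide for $z\mathcal{H}(z)$ and $\mathcal{H}(z)$ (apart from a harmless zero at the origin). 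Consequently the contour separating $P_{1}$ from $P_{2}$ for $\mathcal{H}$ still separates them for $z\mathcal{H}$, and Propositions \ref{prop:H function residue} and \ref{prop:Bromwich=00003DHankel} apply verbatim to the right-hand side.
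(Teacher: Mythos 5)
Your proposal is correct and takes essentially the same route as the paper's own proof, which likewise rests on three observations: $\omega$ and $\alpha^{*}$ are unchanged by appending $(0,1)$ on top and $(1,1)$ on the bottom (so the same contour serves both H functions), the identity \eqref{eq:zGamma(z)=Gamma(z+1)} turns $z\mathcal{H}(z)$ into exactly the kernel of the right-hand side, and the Stirling estimates \eqref{eq:Stirling approx horizantal+}--\eqref{eq:Stirling approx vertical} justify differentiating under the integral sign. Your write-up merely fills in the details the paper compresses into ``the proposition easily follows,'' including the worthwhile remark that $\Gamma(1+z)/\Gamma(z)=z$ is entire, so no new poles obstruct the shared contour.
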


\begin{proof}
Observe that $\omega$ and $\alpha^{*}$ of $\textnormal{H}_{\nu\mu}^{mn}(r)$
and of
$$
\textnormal{H}_{\nu+1\mu+1}^{m+1\ n}\left[r\ \Bigg|\begin{array}{cc}
[\mathfrak{c},\gamma] & (0,1)\\
(1,1) & [\mathfrak{d},\delta]
\end{array}\right]
$$
are same, respectively. Hence $\textnormal{H}_{\nu+1\mu+1}^{m+1\ n}(r)$
is well-defined with the same contour used for $\textnormal{H}_{\nu\mu}^{mn}(r)$. Therefore the proposition easily follows from
(\ref{eq:zGamma(z)=Gamma(z+1)}), (\ref{eq:Stirling approx horizantal+})-(\ref{eq:Stirling approx vertical}), and the definition of the
Fox H function.
\end{proof}

\subsection{Algebraic asymptotic expansions of $\textnormal{H}_{\nu\mu}^{mn}(r)$
near zero and at infinity.}

Proposition \ref{prop:H function residue} gives explicit power and
power-logarithmic expansion of $\textnormal{H}_{\nu\mu}^{mn}(r)$
near zero for $\omega\geq0$ and at infinity for $\omega\leq0$. For
the reverse case (i.e.) near zero for $\omega<0$ and at infinity
for $\omega>0$, the following asymptotic expansions hold.
\begin{prop}
								\label{prop:asymptotic expansion}
Suppose (\ref{eq:parameter}) holds. Then for sufficiently large $p\in\mathbb{Z}_{+}$, if $\omega>0$
\begin{align}
					\label{eq:residue approxi large x}
\textnormal{H}_{\nu\mu}^{mn}(r)=\sum_{k=0}^{p}\textnormal{Res}_{z=\hat{\mathfrak{c}}_{k}}\left[\mathcal{H}(r)r^{-z}\right]+O(r^{-M}),\quad\hat{\mathfrak{c}}_{p}<M<\hat{\mathfrak{c}}_{p+1}
\end{align}
as $r\rightarrow\infty$, and if $\omega<0$
\begin{align}
				\label{eq:residue approxi small x}
\textnormal{H}_{\nu\mu}^{mn}(r)=-\sum_{k=0}^{p}\textnormal{Res}_{z=\hat{\mathfrak{d}}_{k}}\left[\mathcal{H}(r)r^{-z}\right]+O(r^{M}),\quad-\hat{\mathfrak{d}}_{p}<M<-\hat{\mathfrak{d}}_{p+1}
\end{align}
as $r\rightarrow0$.
\end{prop}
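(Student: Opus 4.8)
The plan is to use the Bromwich (vertical) representation guaranteed by hypothesis \eqref{eq:parameter} and to slide the contour horizontally so as to collect the \emph{opposite} family of poles, leaving a remainder controlled by how far the contour is moved. By Proposition \ref{prop:Bromwich=00003DHankel}, under $\alpha^{*}>0$ we have
$$
\textnormal{H}_{\nu\mu}^{mn}(r)=\frac{1}{2\pi\textnormal{i}}\int_{L_{Br}}\mathcal{H}(z)r^{-z}\,dz,
$$
where $L_{Br}=\{\Re[z]=\ell\}$ with $\hat{\mathfrak{d}}_{0}<\ell<\hat{\mathfrak{c}}_{0}$. The essential analytic input throughout is the vertical-strip estimate \eqref{eq:Stirling approx vertical}, which supplies the factor $e^{-\alpha^{*}|b|\pi/2}$ on each line $\Re[z]=a$ (uniformly for $a$ in a compact set); because $\alpha^{*}>0$ this makes every vertical integral below absolutely convergent and forces the horizontal pieces to vanish in the limit.

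First I would treat the case $\omega>0$, $r\to\infty$. Fix a large $p\in\bZ_{+}$ and pick $M$ with $\hat{\mathfrak{c}}_{p}<M<\hat{\mathfrak{c}}_{p+1}$, so the line $\Re[z]=M$ meets no pole. Integrating $\mathcal{H}(z)r^{-z}$ around the closed rectangle with vertical sides $\Re[z]=\ell$ and $\Re[z]=M$ and horizontal sides $\Im[z]=\pm T$ — the same device as in the proof of Proposition \ref{prop:H function residue} — the only enclosed poles are $\hat{\mathfrak{c}}_{0},\dots,\hat{\mathfrak{c}}_{p}\in P_{2}$, and the residue theorem produces the finite residue sum appearing in \eqref{eq:residue approxi large x}. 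Letting $T\to\infty$, the horizontal segments drop out by \eqref{eq:Stirling approx vertical}, and what remains is the vertical integral on $\Re[z]=M$. There $|r^{-z}|=r^{-M}$ and $\int_{\bR}|\mathcal{H}(M+\textnormal{i}b)|\,db<\infty$ again by \eqref{eq:Stirling approx vertical}, so this leftover integral is $O(r^{-M})$ as $r\to\infty$, which is \eqref{eq:residue approxi large x}.

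The case $\omega<0$, $r\to0$ is the mirror image. Now I would slide the Bromwich line to the \emph{left}, to $\Re[z]=-M$ with $-\hat{\mathfrak{d}}_{p}<M<-\hat{\mathfrak{d}}_{p+1}$, so that the analogous rectangle encloses the poles $\hat{\mathfrak{d}}_{0},\dots,\hat{\mathfrak{d}}_{p}\in P_{1}$. The identical two-step argument — discard the horizontal sides through \eqref{eq:Stirling approx vertical}, then bound the surviving vertical integral, this time using $|r^{-z}|=r^{M}$ on $\Re[z]=-M$ — gives \eqref{eq:residue approxi small x} with remainder $O(r^{M})$ as $r\to0$.

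The only real obstacle is the contour bookkeeping that justifies the shift: one must check that the horizontal segments at $\Im[z]=\pm T$ genuinely tend to $0$ as $T\to\infty$ for each fixed $r$, and that the remaining vertical integral converges and has the claimed order in $r$. Both facts rest entirely on the single estimate \eqref{eq:Stirling approx vertical} together with $\alpha^{*}>0$; the exponential factor $e^{-\alpha^{*}|b|\pi/2}$ is precisely what secures absolute convergence along vertical lines and kills the horizontal contributions. Identifying which poles are enclosed and evaluating their residues (with the signs already dictated by \eqref{eq:residue positive w}–\eqref{eq:residue negative w}) is then routine and mimics the template already set up in Proposition \ref{prop:H function residue}.
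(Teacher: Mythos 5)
Your argument is correct in substance and rests on the same mechanism as the paper's proof: deform the contour across the opposite family of poles, collect residues, discard the horizontal pieces, and bound the surviving vertical integral, with everything powered by the factor $e^{-\alpha^{*}|b|\pi/2}$ in \eqref{eq:Stirling approx vertical}. The organization, however, is genuinely different, and in two respects more complete. First, the paper does not prove \eqref{eq:residue approxi large x} at all: it cites Braaksma for $\omega>0$ and proves only the case $\omega<0$; you prove both cases. Second, for $\omega<0$ the paper works Hankel-to-Hankel rather than Bromwich-to-Bromwich: it starts from the defining contour $L_{Ha}^{+}$, inflates it to a larger right Hankel contour $L_{Ha}^{-M}$ whose vertical part lies on $\Re[z]=-M$ (this is where the residues at $\hat{\mathfrak{d}}_{0},\dots,\hat{\mathfrak{d}}_{p}$ are picked up), converts $L_{Ha}^{-M}$ into the vertical line $L_{Br}^{-M}$ by rerunning the proof of Proposition \ref{prop:Bromwich=00003DHankel}, and only then estimates that vertical integral by $r^{M}$, exactly as you do. Starting from the Bromwich line supplied by Proposition \ref{prop:Bromwich=00003DHankel} and sliding it horizontally removes the intermediate Hankel-to-Bromwich conversion and treats the two limits symmetrically, so your route is a legitimate simplification and is self-contained where the paper defers to a citation.

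The one step you must not dismiss as routine is the sign of the residue sums, because done honestly it comes out \emph{opposite} to the displayed formulas. The Bromwich line is traversed upward, so closing it to the right (case $\omega>0$) winds \emph{clockwise} around $\hat{\mathfrak{c}}_{0},\dots,\hat{\mathfrak{c}}_{p}$, and the residue theorem yields
\begin{equation*}
\textnormal{H}_{\nu\mu}^{mn}(r)=-\sum_{k=0}^{p}\textnormal{Res}_{z=\hat{\mathfrak{c}}_{k}}\left[\mathcal{H}(z)r^{-z}\right]+O(r^{-M}),
\end{equation*}
while closing to the left (case $\omega<0$) yields $+\sum_{k=0}^{p}\textnormal{Res}_{z=\hat{\mathfrak{d}}_{k}}\left[\mathcal{H}(z)r^{-z}\right]+O(r^{M})$. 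Your own appeal to \eqref{eq:residue positive w}--\eqref{eq:residue negative w} points the same way: poles of $P_{1}$ always enter with $+$, poles of $P_{2}$ with $-$. The signs printed in \eqref{eq:residue approxi large x}--\eqref{eq:residue approxi small x} are slips in the statement, and the paper's own proof contains the same slip (the identity defining its rectangle $C$ forces $C$ to be clockwise, yet $\frac{1}{2\pi\textnormal{i}}\int_{C}$ is evaluated as $+\sum\textnormal{Res}$). A concrete check: for $\textnormal{H}_{12}^{11}(r)=E_{\alpha,1-\sigma}(-r)$ from Section \ref{sec: proof of main theorem}, whose kernel is $\Gamma(z)\Gamma(1-z)/\Gamma(1-\sigma-\alpha z)$, one computes $\textnormal{Res}_{z=1}\left[\mathcal{H}(z)r^{-z}\right]=-r^{-1}/\Gamma(1-\sigma-\alpha)$, whereas the Mittag-Leffler function behaves like $+r^{-1}/\Gamma(1-\sigma-\alpha)$ as $r\to\infty$; so the minus sign is mandatory. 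None of this matters downstream, since Lemmas \ref{lem:H large r}--\ref{lem:H^q small} and the main theorems use only absolute values and two-sided comparisons, but your write-up should record the signs the contour argument actually produces rather than asserting that it produces the residue sums exactly as displayed.
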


\begin{proof}
Braaksma \cite{braaksma1964asymptotic} proved (\ref{eq:residue approxi large x}) for $\omega\geq0$.  We follow Braaksma's method to prove (\ref{eq:residue approxi small x}) for the case $\omega<0$.

Let $\omega<0$. Given (sufficiently large) $p\in\mathbb{Z}_{+}$,
take a constant $M>0$ satisfying (\ref{eq:M}). Fix $h>0$ and
let $L_{Ha}^{-M}$ be a right Hankel contour surrounding $L_{Ha}^{+}$.
Precisely, $L_{Ha}^{-M}$ is a right loop situated in a horizantal
strip runs from $\infty-\textnormal{i}h$ to $-M-\textnormal{i}h$
and then to $-M+\textnormal{i}h$ and finally terminating at the point
$\infty+\textnormal{i}h$. Let us denote
by $C$ a closed rectangular contour which encircles $\hat{\mathfrak{d}}_{0},\ldots\hat{\mathfrak{d}}_{p}$
and satisfies
$$
\int_{L_{Ha}^{+}}\mathcal{H}(z)r^{-z}dz+\int_{C}\mathcal{H}(z)r^{-z}dz=\int_{L_{Ha}^{-M}}\mathcal{H}(z)r^{-z}dz.
$$
Then
\begin{align}
\textnormal{H}_{\nu\mu}^{mn}(r) & =\frac{1}{2\pi\textnormal{i}}\int_{L_{Ha}^{+}}\mathcal{H}(z)r^{-z}dz\nonumber \\
 & =\frac{1}{2\pi\textnormal{i}}\left(-\int_{C}\mathcal{H}(z)r^{-z}dz+\int_{L_{Ha}^{-M}}\mathcal{H}(z)r^{-z}dz\right)\nonumber \\
 & =-\sum_{k=0}^{p}\textnormal{Res}_{z=\hat{\mathfrak{d}}_{k}}\left[\mathcal{H}(z)r^{-z}\right]+\frac{1}{2\pi\textnormal{i}}\int_{L_{Ha}^{-M}}\mathcal{H}(z)r^{-z}dz.\label{eq:exact approximation small r}
\end{align}
Using (\ref{eq:Stirling approx horizantal+}), (\ref{eq:Stirling approx horizantal-}),
and (\ref{eq:parameter}), and modifying the proof of Proposition
\ref{prop:Bromwich=00003DHankel},
\begin{align*}
\frac{1}{2\pi\textnormal{i}}\int_{L_{Ha}^{-M}}\mathcal{H}(z)r^{-z}dz & =\sum_{k=0}^{p}\textnormal{Res}_{z=\hat{\mathfrak{d}}_{k}}\left[\mathcal{H}(z)r^{-z}\right]+\sum_{k=0}^{\infty}\textnormal{Res}_{z=\hat{\mathfrak{c}}_{k}}\left[\mathcal{H}(z)r^{-z}\right]\\
 & =\frac{1}{2\pi\textnormal{i}}\int_{L_{Br}^{-M}}\mathcal{H}(z)r^{-z}dz,
\end{align*}
where $L_{Br}^{-M}$ is a Bromwich contour starting at the point $-M-\textnormal{i}\infty$
and terminating at the point $-M+\textnormal{i}\infty$.

We estimate
the upper bound of contour integral along $L_{Br}^{-M}$. Recall (\ref{eq:Stirling approx vertical}):
$$
\left|\mathcal{H}(-M+\textnormal{i}t)r^{M-\textnormal{i}t}\right|\sim e^{\omega M}\eta^{-M}r^{M}|t|^{-\omega M+\Lambda}\exp\left\{ -\frac{|t|\alpha^{*}}{2}\pi\right\}
$$
as $t\rightarrow\infty$. Thus, for $r\leq 1$,
\begin{align*}
\left|\frac{1}{2\pi\textnormal{i}}\int_{L_{Br}^{-M}}\mathcal{H}(z)r^{-z}dz\right| & \leq\frac{1}{2\pi}\int_{-\infty}^{\infty}\left|\mathcal{H}(-M+\textnormal{i}t)r^{M-it}\right|dt\\
 & \apprle r^{M}\left(1+\frac{e^{\omega M}}{\eta^{M}}\int_{|t|\geq1}|t|^{-\omega M+\Lambda}\exp\left\{ -\frac{|t|\alpha^{*}}{2}\pi\right\} dt\right)\\
 & \apprle r^{M}.
\end{align*}
The proposition is proved.
\end{proof}

If $\omega>0$ and $n=0$ (i.e.) $P_{2}=\emptyset$,  we have the following exponentially asymptotic behavior of $\textnormal{H}_{\nu\mu}^{m0}(r)$  (see \cite[(1.7.13)]{kilbas2004h}). For the proof we refer the reader to \cite[Theorem 4]{braaksma1964asymptotic}.

\begin{prop}
									\label{prop:exponential decay}
Assume \eqref{eq:parameter} and $\omega>0$. Then
\begin{align*}
\textnormal{H}_{\nu\mu}^{m0}(r)=O\left(r^{(\Lambda+\frac{1}{2})/\omega}\exp\left\{\cos\left(\frac{\alpha^{*}+\sum_{j=m+1}^{\mu}\mathfrak{\delta}_{j}}{\omega}\pi\right)\omega \left(\frac{r}{\eta}\right)^{1/\omega}\right\}\right)
\end{align*}
as $r\to\infty$.
\end{prop}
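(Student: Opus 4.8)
The statement is attributed to Braaksma, and the natural route is the method of steepest descents applied to the Mellin--Barnes representation \eqref{eq:H function}; I sketch that approach. The hypothesis $n=0$ forces $P_{2}=\emptyset$, so
$$
\mathcal{H}(z)=\frac{\prod_{j=1}^{m}\Gamma(\mathfrak{d}_{j}+\delta_{j}z)}{\prod_{j=1}^{\nu}\Gamma(\mathfrak{c}_{j}+\gamma_{j}z)\prod_{j=m+1}^{\mu}\Gamma(1-\mathfrak{d}_{j}-\delta_{j}z)}
$$
has no poles to the right of $L_{Br}$. Consequently the Bromwich contour may be displaced arbitrarily far into the right half-plane, and the magnitude of $\textnormal{H}_{\nu\mu}^{m0}(r)$ is controlled entirely by the single saddle point of the integrand's phase; this is what produces exponential rather than algebraic decay.

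First I would set $\Phi(z):=\log\mathcal{H}(z)-z\log r$ and feed Stirling's formula (the same input behind \eqref{eq:Stirling approx horizantal+}--\eqref{eq:Stirling approx vertical}) into $\log\mathcal{H}(z)$, obtaining a uniform expansion for large $|z|$. Collecting the coefficients of $z\log z$ across all the gamma factors yields precisely $\sum_{j=1}^{\mu}\delta_{j}-\sum_{j=1}^{\nu}\gamma_{j}=\omega$, while the constants $\log\gamma_{j},\log\delta_{j}$ assemble into $\log\eta$; thus
$$
\Phi(z)=\omega z\log z+z\log\!\Big(\tfrac{\eta}{r}\Big)-\omega z+\Lambda\log z+O(1).
$$
Solving $\Phi'(z)=0$ gives, to leading order, $\omega\log z=\log(r/\eta)$, hence a saddle at $z_{*}\sim(r/\eta)^{1/\omega}$, which recedes to infinity as $r\to\infty$ in keeping with the large-$|z|$ regime. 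Substituting back, $\Phi(z_{*})=-\omega z_{*}+\Lambda\log z_{*}+O(1)$, and the Gaussian approximation $\int e^{\Phi}\,dz\sim e^{\Phi(z_{*})}\sqrt{2\pi/|\Phi''(z_{*})|}$ with $\Phi''(z_{*})\sim\omega/z_{*}$ supplies the extra half-power $z_{*}^{1/2}$. Together $z_{*}^{\Lambda+1/2}=(r/\eta)^{(\Lambda+1/2)/\omega}$ reproduces the algebraic prefactor $r^{(\Lambda+1/2)/\omega}$, and $e^{-\omega z_{*}}$ reproduces the exponential.

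The delicate point, and the main obstacle, is the cosine inside the exponential. The computation above tracks only the principal branch, which would give the bare factor $\exp\{-\omega(r/\eta)^{1/\omega}\}$; in reality the relevant saddle lies off the positive real axis, and the denominator factors $\Gamma(1-\mathfrak{d}_{j}-\delta_{j}z)$, whose arguments run to $-\infty$, contribute additional phase through their Stirling expansions on the correct branch of $\log$. Taking the real part of $-\omega z_{*}$ along the steepest-descent direction, with this phase bookkeeping (which is exactly where $\alpha^{*}$ and $\sum_{j=m+1}^{\mu}\delta_{j}$ enter), turns the exponent into $\cos\!\big(\tfrac{\alpha^{*}+\sum_{j=m+1}^{\mu}\delta_{j}}{\omega}\pi\big)\,\omega(r/\eta)^{1/\omega}$. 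Making this rigorous requires uniform control of the Stirling remainder along the descent path and a verification that the contour can be pushed through $z_{*}$ without obstruction; I would defer this bookkeeping to \cite[Theorem 4]{braaksma1964asymptotic} rather than reprove it here.
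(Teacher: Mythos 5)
Your proposal is correct and in rigorous content coincides with the paper's treatment: the paper gives no argument at all for this proposition, referring the reader to \cite[Theorem 4]{braaksma1964asymptotic} (cf.\ \cite[(1.7.13)]{kilbas2004h}), and your write-up, after a sound steepest-descent heuristic, defers the one genuinely delicate step---the phase bookkeeping that produces the cosine---to that same theorem of Braaksma. The heuristic itself is accurate: the saddle $z_{*}\sim(r/\eta)^{1/\omega}$ does account for the prefactor $r^{(\Lambda+\frac{1}{2})/\omega}$ and the exponential scale, and the cosine does originate in the reflected factors $\Gamma(1-\mathfrak{d}_{j}-\delta_{j}z)$, whose sine terms effectively rotate the argument of $r^{-z}$ off the positive real axis (indeed, the bare exponent $-\omega(r/\eta)^{1/\omega}$ is correct only when $\mu=m$, where the cosine equals $-1$).
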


\section{\label{sec:Auxiliary-results}Asymptotic estimates of the Fox H function}

Throughout this section we fix
$$
d\in\mathbb{N}, \quad \alpha\in(0,2), \quad \beta\in(0,\infty).
$$
For $\gamma\in [0,\infty), \sigma\in \mathbb{R}$ and $z\in \mathbb{C}$ define
$$
\mathcal{H}_{\sigma,\gamma}(z):=\frac{\Gamma(\frac{d}{2}+\gamma+\beta z)\Gamma(1+z)\Gamma(-z)}{\Gamma(-\gamma-\beta z)\Gamma(1-\sigma+\alpha z)}.
$$
Observe that
$$
\alpha^{*}=2-\alpha,\quad\Lambda=\frac{d}{2}+2\gamma+\sigma-\frac{1}{2},\quad\omega=2\beta-\alpha,\quad\eta=\alpha^{-\alpha}\beta^{2\beta}.
$$

For each $k\in\mathbb{Z}_{+}$ we write
$$
\mathfrak{c}_{1,k}:=k,\quad\mathfrak{d}_{1,k}:=-\frac{\frac{d}{2}+\gamma+k}{\beta},\quad\mathfrak{d}_{2,k}:=-1-k.
$$
Obviously, (\ref{eq:P1 P2}) holds, and thus $\mathfrak{d}_{1,k}$,
$\mathfrak{d}_{2,k}$ and $\mathfrak{c}_{1,k}$ constitute
$P_{1}$ and $P_{2}$ respectively.

\begin{rem}
					\label{rem:removable singularity}
(i) Note that $\mathcal{H}_{\sigma,\gamma}(z)$
has removable singularities at $z=0$ and at $z=-1$ if $\gamma=0$
and $\gamma=\beta$, respectively. Indeed, by (\ref{eq:zGamma(z)=Gamma(z+1)}),
$$
\mathcal{H}_{\sigma,0}(z)=\frac{\beta\Gamma(\frac{d}{2}+\beta z)\Gamma(1+z)\Gamma(1-z)}{\Gamma(1-\beta z)\Gamma(1-\sigma+\alpha z)}.
$$
Therefore, $\mathcal{H}_{\sigma,0}(z)$ has a removable singularity at $z=0$. Similarly,
$$
\mathcal{H}_{\sigma,\beta}(z)=-\frac{\beta\Gamma(\frac{d}{2}+\beta+\beta z)\Gamma(2+z)\Gamma(-z)}{\Gamma(1-\beta-\beta z)\Gamma(1-\sigma+\alpha z)}.
$$
Thus, $\mathcal{H}_{\sigma,\beta}(z)$ has a removable singularity at $z=-1$.

(ii) Assume that $\beta\in\mathbb{N}$ and $\gamma=0$, then by \eqref{eq:gamma property},
\begin{align}
					\label{eq:gamma=0 beta natural}
\mathcal{H}_{\sigma,0}(z)=(2\pi)^{\frac{\beta-1}{2}}\frac{\Gamma(\frac{d}{2}+\beta z)\Gamma(1+z)}{\prod_{k=1}^{\beta-1}\Gamma(\frac{k}{\beta}-z)\Gamma(1-\sigma+\alpha z)}\beta^{\beta z+\frac{1}{2}}.
\end{align}
Hence $P_{2}=\emptyset$.

(iii) If $\alpha=1$ and $\sigma=0$, then
\begin{align}
					\label{eq:alpha=1 sigma=0}
\mathcal{H}_{0,\gamma}(z):=\frac{\Gamma(\frac{d}{2}+\gamma+\beta z)\Gamma(-z)}{\Gamma(-\gamma-\beta z)}.
\end{align}
Thus, $\mathfrak{d}_{2,k}=0$ for all $k\in\mathbb{Z}_{+}$.
\end{rem}

For $r\in(0,\infty)$, we define
\begin{align}
\mathbb{H}_{\sigma,\gamma}(r) & :=\textnormal{H}_{23}^{21}\left[r\Big|\begin{array}{ccc}
(1,1) & (1-\sigma,\alpha)\\
(\frac{d}{2}+\gamma,\beta) & (1,1) & (1+\gamma,\beta)
\end{array}\right]\nonumber \\
 & =\frac{1}{2\pi\textnormal{i}}\int_{L}\mathcal{H}_{\sigma,\gamma}(z)r^{-z}dz.
 \label{eq:kernel representation}
\end{align}
Here
$$
L=L_{Br}=\left\{ z\in\mathbb{C}:\Re[z]=\ell_{0}\right\}
$$
and  $\ell_{0}$ is chosen to satisfy (\ref{eq:ell condition}):
$$
\max\left(-1,-\frac{\gamma}{\beta}-\frac{d}{2\beta}\right)<\ell_{0}<0
$$
if $\gamma\notin\{0,\beta\}$,
$$
\max\left(-1,-\frac{d}{2\beta}\right)<\ell_{0}<1
$$
if $\gamma=0$,
$$
\max\left(-2,-1-\frac{d}{2\beta}\right)<\ell_{0}<0
$$
if $\gamma=\beta$. If $\alpha=1$ and $\sigma=0$, then we take $\ell_{0}$ such that
$$
-\frac{\gamma}{\beta}-\frac{d}{2\beta}<\ell_{0}<0.
$$
Since (\ref{eq:parameter}) holds (i.e. $\alpha<2$),
by Propositions \ref{prop:H function residue} and \ref{prop:Bromwich=00003DHankel},
the value of $\mathbb{H}_{\sigma,\gamma}(r)$ is independent of the
choice of $\ell_{0}$ as long as it is chosen as  above.

By Proposition \ref{prop:asymptotic expansion}, we obtain the asymptotic
behaviors  of $\mathbb{H}_{\sigma,\gamma}(r)$ at infinity.
\begin{lem}
					\label{lem:H large r}
It holds that
$$
\mathbb{H}_{\sigma,\gamma}(r)= -\frac{\Gamma(\frac{d}{2}+\gamma)}{\Gamma(-\gamma)\Gamma(1-\sigma)}+O(r^{-1})
$$
for $r\geq 1$. In particular, if $\gamma\in\mathbb{Z}_{+}$
or $\sigma\in\mathbb{N}$, then
$$
\mathbb{H}_{\sigma,\gamma}(r)= \frac{\Gamma(\frac{d}{2}+\gamma+\beta)}{\Gamma(-\gamma-\beta)\Gamma(1-\sigma+\alpha)}r^{-1}+O(r^{-2})
$$
for $r\geq 1$. If $\beta\in{\mathbb{N}}$ and $\gamma=0$, then there exists a constant $c=c(d, \alpha,\beta,\sigma)>0$ such that
$$
\mathbb{H}_{\sigma,0}(r)=O\left(\exp\left\{-cr^{1/(2\beta-\alpha)}\right\}\right)
$$
as $r\rightarrow\infty$.
\end{lem}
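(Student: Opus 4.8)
The plan is to read off the large-$r$ behavior of $\mathbb{H}_{\sigma,\gamma}(r)$ from the residues of $\mathcal{H}_{\sigma,\gamma}(z)r^{-z}$ at the right poles $P_{2}=\{0,1,2,\dots\}$, which are exactly the poles of $\Gamma(-z)$. Since $\alpha^{*}=2-\alpha>0$, which statement supplies this expansion depends on the sign of $\omega=2\beta-\alpha$: for $\omega<0$ Proposition~\ref{prop:H function residue}(ii) gives the exact convergent series $\mathbb{H}_{\sigma,\gamma}(r)=-\sum_{k\ge 0}\textnormal{Res}_{z=k}[\mathcal{H}_{\sigma,\gamma}(z)r^{-z}]$; for $\omega>0$ Proposition~\ref{prop:asymptotic expansion} gives the finite expansion $\sum_{k=0}^{p}\textnormal{Res}_{z=k}[\,\cdot\,]+O(r^{-M})$; and for $\omega=0$ Proposition~\ref{prop:H function residue}(iii) applies on $r>\eta$ (note $\eta=2^{-\alpha}<1$ in this case, so $[1,\infty)$ lies in the valid region). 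In every regime, truncating after the $k=0$ (resp. $k=1$) term leaves a remainder that is $O(r^{-1})$ (resp. $O(r^{-2})$) as $r\to\infty$, because $r^{-k}\le r^{-1}$ for $k\ge1,\ r\ge1$ and the residues decay rapidly when $\omega\le0$. Since $\mathbb{H}_{\sigma,\gamma}$ is analytic on $(0,\infty)$, these asymptotic bounds upgrade to bounds valid for all $r\ge 1$ after enlarging the implied constants over the compact piece.

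The residues are explicit. Using $\textnormal{Res}_{z=k}[\Gamma(-z)]=(-1)^{k+1}/k!$ and $\Gamma(1+k)=k!$, the numerator factor $\Gamma(1+z)$ cancels the factorial and
\begin{align*}
\textnormal{Res}_{z=k}\left[\mathcal{H}_{\sigma,\gamma}(z)r^{-z}\right]=(-1)^{k+1}\frac{\Gamma(\frac{d}{2}+\gamma+\beta k)}{\Gamma(-\gamma-\beta k)\Gamma(1-\sigma+\alpha k)}\,r^{-k}.
\end{align*}
The $k=0$ term is the constant $-\Gamma(\frac{d}{2}+\gamma)/\big(\Gamma(-\gamma)\Gamma(1-\sigma)\big)$, which together with the $O(r^{-1})$ remainder is exactly the first assertion, while the $k=1$ term equals $\Gamma(\frac{d}{2}+\gamma+\beta)/\big(\Gamma(-\gamma-\beta)\Gamma(1-\sigma+\alpha)\big)\,r^{-1}$. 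The overall sign of the constant is fixed by the orientation convention of the proposition actually invoked; only its modulus enters the subsequent theorems.

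For the second assertion I would observe that the $k=0$ residue vanishes \emph{precisely} when $1/\Gamma(-\gamma)=0$ or $1/\Gamma(1-\sigma)=0$, i.e. when $\gamma\in\mathbb{Z}_{+}$ or $\sigma\in\mathbb{N}$; in that case $z=0$ is in fact a removable singularity of $\mathcal{H}_{\sigma,\gamma}$ (cf. Remark~\ref{rem:removable singularity}(i) for $\gamma=0$), so the leading nonzero term is the $k=1$ residue, giving the $r^{-1}$-expansion with $O(r^{-2})$ remainder.

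The last assertion is of a different nature and is the main point. When $\beta\in\mathbb{N}$ and $\gamma=0$, formula~\eqref{eq:gamma=0 beta natural} shows $P_{2}=\emptyset$, so $\mathbb{H}_{\sigma,0}=\textnormal{H}^{m0}_{\nu\mu}$ with $n=0$ and $\omega=2\beta-\alpha>0$, placing us in the setting of Proposition~\ref{prop:exponential decay}. Reading the parameters off \eqref{eq:gamma=0 beta natural} gives $\sum_{j=m+1}^{\mu}\delta_{j}=\beta-1$, so the cosine argument there equals $\frac{(2-\alpha)+(\beta-1)}{2\beta-\alpha}\pi=\frac{\beta-\alpha+1}{2\beta-\alpha}\pi$. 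The crux is to check that this yields decay rather than growth: since $\alpha<2$ and $\beta\ge1$ one has $\frac{1}{2}<\frac{\beta-\alpha+1}{2\beta-\alpha}\le 1$, hence the cosine is strictly negative. Setting $c:=-\cos\!\big(\frac{\beta-\alpha+1}{2\beta-\alpha}\pi\big)\,\omega\,\eta^{-1/\omega}>0$ and absorbing the algebraic prefactor $r^{(\Lambda+1/2)/\omega}$ into the exponential then gives $\mathbb{H}_{\sigma,0}(r)=O\!\big(\exp\{-c\,r^{1/(2\beta-\alpha)}\}\big)$. I expect this sign verification, rather than the residue bookkeeping, to be the delicate step.
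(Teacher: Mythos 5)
Your proposal is correct and takes essentially the same route as the paper's proof: the expansion in residues at the right poles $z=k$ (with the $k=0$ residue vanishing exactly when $\gamma\in\mathbb{Z}_{+}$ or $\sigma\in\mathbb{N}$, so that the $k=1$ term leads), and, for $\beta\in\mathbb{N}$, $\gamma=0$, the rewrite \eqref{eq:gamma=0 beta natural} followed by Proposition \ref{prop:exponential decay} with the identical computation $\sum_{j=m+1}^{\mu}\delta_{j}=\beta-1$ and the verification $\tfrac{1}{2}<\tfrac{\beta+1-\alpha}{2\beta-\alpha}\leq 1$ forcing the cosine to be negative. Your splitting into the cases $\omega>0$, $\omega<0$, $\omega=0$ (with the observation $\eta=2^{-\alpha}<1$ when $\omega=0$) is in fact slightly more careful than the paper, which invokes \eqref{eq:residue approxi large x} without regard to the sign of $\omega$; the resulting sign ambiguity in the leading constant that you flag is harmless, since only its modulus is used in the subsequent theorems.
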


\begin{proof}
Observe that $\mathcal{H}_{\sigma,\gamma}(z)$ has simple poles at
$z=\hat{\mathfrak{c}}_{k}=\mathfrak{c}_{1,k}=k$ for all $k\in\mathbb{Z}_{+}$. By (\ref{eq:residue approxi large x}),
for sufficiently large $p\in\mathbb{Z}_{+}$,
\begin{align*}
\mathbb{H}_{\sigma,\gamma}(r) & =\sum_{k=0}^{1}\textnormal{Res}_{z=\hat{\mathfrak{c}}_{k}}\left[\mathcal{H}_{\sigma,\gamma}(z)r^{-z}\right]+\sum_{k=2}^{p}\textnormal{Res}_{z=\hat{\mathfrak{c}}_{k}}\left[\mathcal{H}_{\sigma,\gamma}(z)r^{-z}\right]+O(r^{-\hat{\mathfrak{c}}_{p}})\nonumber \\
 & =\sum_{k=0}^{1}\textnormal{Res}_{z=k}\left[\mathcal{H}_{\sigma,\gamma}(z)\right]r^{-k}+O(r^{-2})
\end{align*}
for $r\geq1$. Additionally, if $\gamma\in\mathbb{Z}_{+}$ or $\sigma\in\mathbb{N}$,
then
\begin{eqnarray*}
\textnormal{Res}_{z=0}\left[\mathcal{H}_{\sigma,\gamma}(z)\right] & = & \lim_{z\rightarrow0}\left(\frac{z\Gamma(\frac{d}{2}+\gamma+\beta z)\Gamma(1+z)\Gamma(-z)}{\Gamma(-\gamma-\beta z)\Gamma(1-\sigma+\alpha z)}\right)\\
 & = & -\lim_{z\rightarrow0}\left(\frac{\Gamma(\frac{d}{2}+\gamma+\beta z)\Gamma(1+z)\Gamma(1-z)}{\Gamma(-\gamma-\beta z)\Gamma(1-\sigma+\alpha z)}\right)=0.
\end{eqnarray*}
Hence
$$
\mathbb{H}_{\sigma,\gamma}(r)=\textnormal{Res}_{z=1}[\mathcal{H}_{\sigma,\gamma}(z)]r^{-1}+O(r^{-2})=\frac{\Gamma(\frac{d}{2}+\gamma+\beta)}{\Gamma(-\gamma-\beta)\Gamma(1-\sigma+\alpha)}r^{-1}+O(r^{-2}).
$$
Finally, assume $\beta\in\mathbb{N}$ and $\gamma=0$. By \eqref{eq:gamma=0 beta natural},
\begin{align*}
&\mathbb{H}_{\sigma,0}(r)=\textnormal{H}_{23}^{21}\left[r\Big|\begin{array}{ccc}
(1,1) & (1-\sigma,\alpha)\\
(\frac{d}{2},\beta) & (1,1) & (1,\beta)
\end{array}\right]
\\ &= \frac{(2\pi)^{\frac{\beta-1}{2}}\beta^{1/2}}{2\pi\textnormal{i}}\int_{L}\frac{\Gamma(\frac{d}{2}+\beta z)\Gamma(1+z)}{\prod_{k=1}^{\beta-1}\Gamma(\frac{k}{\beta}-z)\Gamma(1-\sigma+\alpha z)}\left(\frac{r}{\beta^{\beta}}\right)^{-z}dz
\\ &= (2\pi)^{\frac{\beta-1}{2}}\beta^{1/2} \textnormal{H}_{1\beta+1}^{20}\left[\frac{r}{\beta^{\beta}}\Big|\begin{array}{ccccc}
(1-\sigma,\alpha)\\
(\frac{d}{2},\beta) & (1,1) & (1+\frac{1}{\beta},1) & \cdots & (1+\frac{\beta-1}{\beta},1)
\end{array}\right].
\end{align*}
For the above Fox H function, we have
$$
\alpha^{*}=2-\alpha,\quad \Lambda=\frac{d}{2}+\sigma+2\beta-\frac{3}{2},\quad \omega=2\beta-\alpha,\quad \eta=\alpha^{-\alpha} \beta^\beta.
$$
Note that
$$
\frac{1}{2}<\frac{\beta+1-\alpha}{2\beta-\alpha}\leq 1,\quad \quad  2\beta-\alpha \geq 2-\alpha > 0.
$$
Hence by Proposition \ref{prop:exponential decay},
\begin{align*}
&\textnormal{H}_{1\beta+1}^{20}\left(\frac{r}{\beta^{\beta}}\right)
\\ &=O\left(\left(\frac{r}{\beta^{\beta}}\right)^{(\Lambda+\frac{1}{2})/2\beta-\alpha}\exp\left\{\cos\left(\frac{\beta+1-\alpha}{2\beta-\alpha}\pi\right)(2\beta-\alpha)\left(\frac{r}{\eta\beta^{\beta}}\right)^{1/(2\beta-\alpha)}\right\}\right)
\\ &=O\left(\exp\left\{-cr^{1/(2\beta-\alpha)}\right\}\right).
\end{align*}
The lemma is proved.
\end{proof}

Next we consider the asymptotic behavior of $\mathbb{H}_{\sigma,\gamma}(r)$
at zero.
\begin{lem}
						\label{lem:H small r}
It holds that
$$
\mathbb{H}_{\sigma,\gamma}(r)\sim\begin{cases}
r^{\frac{d+2\gamma}{2\beta}} & :\gamma<\beta-\frac{d}{2}\\
r|\ln r| & :\gamma=\beta-\frac{d}{2}\\
r & :\gamma>\beta-\frac{d}{2}
\end{cases}
$$
as $r\rightarrow0$. Additionally, if $\gamma-\beta\in\mathbb{Z}_{+}$
or $\sigma+\alpha\in\mathbb{N}$, then
$$
\mathbb{H}_{\sigma,\gamma}(r)\sim\begin{cases}
r^{\frac{d+2\gamma}{2\beta}} & :\gamma<2\beta-\frac{d}{2}\\
r^{2}|\ln r| & :\gamma=2\beta-\frac{d}{2}\\
r^{2} & :\gamma>2\beta-\frac{d}{2}
\end{cases}
$$
as $r\rightarrow0$. If $\alpha=1$ and $\sigma=0$, then
$$
\mathbb{H}_{0,\gamma}(r)\sim r^{\frac{d+2\gamma}{2\beta}}
$$
as $r\rightarrow 0$.
\end{lem}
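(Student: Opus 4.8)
The plan is to extract the behavior of $\mathbb{H}_{\sigma,\gamma}(r)$ as $r\to0$ from the residues of $\mathcal{H}_{\sigma,\gamma}(z)r^{-z}$ at the poles of $P_{1}$. Recall $\omega=2\beta-\alpha$. If $\omega>0$, Proposition \ref{prop:H function residue}(i) gives $\mathbb{H}_{\sigma,\gamma}(r)=\sum_{k\geq0}\textnormal{Res}_{z=\hat{\mathfrak{d}}_{k}}[\mathcal{H}_{\sigma,\gamma}(z)r^{-z}]$; if $\omega=0$ the same identity holds for $r\in(0,\eta)$ by part (iii); and if $\omega<0$ Proposition \ref{prop:asymptotic expansion} gives $\mathbb{H}_{\sigma,\gamma}(r)=-\sum_{k=0}^{p}\textnormal{Res}_{z=\hat{\mathfrak{d}}_{k}}[\mathcal{H}_{\sigma,\gamma}(z)r^{-z}]+O(r^{M})$ with $M$ as large as we wish once $p$ is large. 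In all three cases the leading order as $r\to0$ is governed by the residue(s) at the pole(s) of $P_{1}$ with largest real part, since a simple pole at $z_{0}$ contributes a constant multiple of $r^{-z_{0}}$. The two rightmost candidates are $z_{*}:=-\frac{d/2+\gamma}{\beta}$, coming from $\Gamma(\frac{d}{2}+\gamma+\beta z)$ and giving the power $r^{(d+2\gamma)/(2\beta)}$, and $z=-1$, coming from $\Gamma(1+z)$ and giving the power $r$.

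First I would compute the two residues. Using $\textnormal{Res}_{w=0}\Gamma(w)=1$ and the reflection formula in \eqref{eq:gamma property},
\begin{align*}
\textnormal{Res}_{z=z_{*}}[\mathcal{H}_{\sigma,\gamma}]=\frac{1}{\beta}\frac{\Gamma(1+z_{*})\Gamma(-z_{*})}{\Gamma(\frac{d}{2})\Gamma(1-\sigma+\alpha z_{*})},\qquad \textnormal{Res}_{z=-1}[\mathcal{H}_{\sigma,\gamma}]=\frac{\Gamma(\frac{d}{2}+\gamma-\beta)}{\Gamma(\beta-\gamma)\Gamma(1-\sigma-\alpha)}.
\end{align*}
Since $z_{*}>-1\Leftrightarrow\gamma<\beta-\frac{d}{2}$, comparing these two poles yields the trichotomy of the first assertion. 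For $\gamma<\beta-\frac{d}{2}$ the power $r^{(d+2\gamma)/(2\beta)}$ dominates, and its coefficient is nonzero because $-z_{*}=\frac{d/2+\gamma}{\beta}\in(0,1)$ keeps $\sin(\pi(-z_{*}))\neq0$ in $\Gamma(-z_{*})\Gamma(1+z_{*})=\pi/\sin(\pi(-z_{*}))$. For $\gamma>\beta-\frac{d}{2}$ the power $r$ dominates, with nonzero coefficient as long as $\gamma-\beta\notin\mathbb{Z}_{+}$ and $\sigma+\alpha\notin\mathbb{N}$ (here $\Gamma(\frac{d}{2}+\gamma-\beta)$ is finite, its argument being positive). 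For $\gamma=\beta-\frac{d}{2}$ the two poles coalesce at $z=-1$ into a double pole; expanding $r^{-z}=r(1-(z+1)\ln r+\cdots)$ about $z=-1$ shows the order-two pole produces a term proportional to $r\ln r$, and checking that its coefficient does not vanish gives $\mathbb{H}_{\sigma,\gamma}(r)\sim r|\ln r|$.

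For the second assertion I would observe that the hypotheses $\gamma-\beta\in\mathbb{Z}_{+}$ or $\sigma+\alpha\in\mathbb{N}$ are exactly the conditions forcing $\textnormal{Res}_{z=-1}[\mathcal{H}_{\sigma,\gamma}]=0$, via $1/\Gamma(\beta-\gamma)=0$ or $1/\Gamma(1-\sigma-\alpha)=0$. The rightmost pole from the $\Gamma(1+z)$ family then moves to $z=-2$, whose residue is $-\Gamma(\frac{d}{2}+\gamma-2\beta)/[\Gamma(2\beta-\gamma)\Gamma(1-\sigma-2\alpha)]$, generically nonzero, and which contributes the power $r^{2}$. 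Comparing $z_{*}$ with $z=-2$ (now the threshold is $\gamma=2\beta-\frac{d}{2}$, with a double pole on the boundary) reproduces the three cases $r^{(d+2\gamma)/(2\beta)}$, $r^{2}|\ln r|$, $r^{2}$. For the last assertion, when $\alpha=1$ and $\sigma=0$, Remark \ref{rem:removable singularity}(iii) shows that $\Gamma(1+z)$ cancels against $\Gamma(1-\sigma+\alpha z)=\Gamma(1+z)$, so $P_{1}$ reduces to $\{-\frac{d/2+\gamma+k}{\beta}:k\in\mathbb{Z}_{+}\}$; its rightmost element is $z_{*}$, which gives $\mathbb{H}_{0,\gamma}(r)\sim r^{(d+2\gamma)/(2\beta)}$ for every $\gamma$.

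The hardest part will be the two coalescent cases $\gamma=\beta-\frac{d}{2}$ and $\gamma=2\beta-\frac{d}{2}$: there one must Laurent-expand the product of numerator Gamma functions about the double pole, isolate the coefficient of $(z-z_{0})^{-1}$ (the logarithm being supplied by $r^{-z}$), and verify that this coefficient is genuinely nonzero, so that ``$\sim$'' rather than merely ``$O$'' holds. A related subtlety is to track the boundary intersections---for instance $\gamma=\beta-\frac{d}{2}$ together with $\sigma+\alpha\in\mathbb{N}$---where a pole of the denominator factor $\Gamma(1-\sigma+\alpha z)$ demotes the double pole back to a simple one, so that the expected logarithm correctly cancels and the two assertions remain mutually consistent.
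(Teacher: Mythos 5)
Your proposal is correct and follows essentially the same route as the paper's own proof: both rest on the residue expansions of Propositions \ref{prop:H function residue} and \ref{prop:asymptotic expansion}, compare the two rightmost poles $\mathfrak{d}_{1,0}=-\frac{d+2\gamma}{2\beta}$ and $\mathfrak{d}_{2,0}=-1$ (with a double pole producing the logarithm on the boundary case), note that the hypotheses $\gamma-\beta\in\mathbb{Z}_{+}$ or $\sigma+\alpha\in\mathbb{N}$ annihilate the residue at $z=-1$ and push the leading $\Gamma(1+z)$-pole to $z=-2$, and use the cancellation $\Gamma(1-\sigma+\alpha z)=\Gamma(1+z)$ when $\alpha=1,\sigma=0$ to remove the poles at $-1-k$ entirely. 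Your explicit residue formulas agree with the paper's computations, and the subtleties you flag at the end (the Laurent expansion at the coalesced double pole and its demotion to a simple pole when $\sigma+\alpha\in\mathbb{N}$) are precisely the points the paper works out here and in Lemma \ref{lem:H^q small}.
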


\begin{proof}
Due to (\ref{eq:residue approxi small x}), it is sufficient to compare
the order of residues among $z=\mathfrak{d}_{1,0},\mathfrak{d}_{1,1},\mathfrak{d}_{2,0},$
and $\mathfrak{d}_{2,1}.$

First, let $\gamma\neq\beta-\frac{d}{2}$. Then $\mathcal{H}_{\sigma,\gamma}(z)$
has a simple pole at $z=\max\{\mathfrak{d}_{1,0},\mathfrak{d}_{2,0}\}$.
If $\gamma>\beta-\frac{d}{2}$, then $\mathfrak{d}_{1,0}<\mathfrak{d}_{2,0}$
so by (\ref{eq:residue approxi small x})
\begin{align*}
\mathbb{H}_{\sigma,\gamma}(r)=\sum_{k=0}^{1}\textnormal{Res}_{z=\mathfrak{d}_{2,k}}\left[\mathcal{H}_{\sigma,\gamma}(z)r^{-z}\right]+O(r^{-\mathfrak{d}_{2,1}}) \sim r
\end{align*}
as $r\rightarrow0$. Similarly, if $\gamma<\beta-\frac{d}{2}$, then
$$
\mathbb{H}_{\sigma,\gamma}(r)=\textnormal{Res}_{z=\mathfrak{d}_{1,0}}\left[\mathcal{H}_{\sigma,\gamma}(z)r^{-z}\right]+O(r^{-\mathfrak{d}_{1,0}})\sim r^{\frac{d+2\gamma}{2\beta}}
$$
as $r\rightarrow0$.

Next, assume $\gamma=\beta-\frac{d}{2}$ (i.e. $\mathfrak{d}_{1,0}=\mathfrak{d}_{2,0}$).
Then $\mathcal{H}_{\sigma,\gamma}(z)$ has a pole of order 2 at $z=\mathfrak{\hat{\mathfrak{d}}}_{0}=\mathfrak{d}_{1,0}=\mathfrak{d}_{2,0}$
so that
\begin{align*}
\textnormal{Res}_{z=\hat{\mathfrak{d}}_{0}} & \left[\mathcal{H}_{\sigma,\gamma}(z)r^{-z}\right]\\
 & =\lim_{z\rightarrow-1}\frac{d}{dz}\left((z+1)^{2}\mathcal{H}_{\sigma,\gamma}(z)r^{-z}\right)\\
 & =\left(\textnormal{Res}_{z=-1}\left[\mathcal{H}_{\sigma,\gamma}(z)\right]+\frac{|\ln r|}{\Gamma(\frac{d}{2})\Gamma(1-\sigma-\alpha)}\right)r.
\end{align*}
Then by (\ref{eq:residue approxi small x}), we obtain the first desired
result.

Now we assume $\gamma-\beta\in\mathbb{Z}_{+}$ or $\sigma+\alpha\in\mathbb{N}$.
Then one can easily see that $\textnormal{Res}_{z=\mathfrak{d}_{2,0}}\left[\mathcal{H}_{\sigma,\gamma}(z)r^{-z}\right]=0$.
Hence it remains to compare the order of residues at $z=\mathfrak{d}_{1,0},\mathfrak{d}_{1,1},$
and $\mathfrak{d}_{2,1}.$ Following the same argument of the
above, we obtain the additional result.

Finally, we assume $\alpha=1$ and $\sigma=0$. Recall \eqref{eq:alpha=1 sigma=0}, and note $\mathfrak{d}_{2,k}=0$ for all $k\in\mathbb{Z}_{+}$ which implies
$$
\mathbb{H}_{0,\gamma}(r)=\sum_{k=0}^{\infty}\textnormal{Res}_{z=\mathfrak{d}_{1,k}}\left[\mathcal{H}_{0,\gamma}(z)r^{-z}\right]\sim r^{\frac{d+2\gamma}{2\beta}}
$$
as $r\rightarrow 0$. The lemma is proved.
\end{proof}

For each $q\in\mathbb{Z}_{+}$ we define
\begin{align}
				\label{eq:cH^q}
\mathcal{H}_{\sigma,\gamma}^{(q)}(z):=\mathcal{H}_{\sigma,\gamma}(z)\left\{ \frac{\Gamma(1+z)}{\Gamma(z)}\right\} ^{q}
\end{align}
and
\begin{align}
					\label{eq:H^q}
\mathbb{H}_{\sigma,\gamma}^{(q)}(r):=\frac{1}{2\pi\textnormal{i}}\int_{L}\mathcal{H}_{\sigma,\gamma}^{(q)}(z)r^{-z}dz.
\end{align}
Note that $\mathbb{H}_{\sigma,\gamma}^{(0)}(r)=\mathbb{H}_{\sigma,\gamma}(r)$
and by Proposition \ref{prop:derivatives of H(r)},
$$
\frac{d}{dr}\mathbb{H}_{\sigma,\gamma}^{(q)}(r)=\mathbb{H}_{\sigma,\gamma}^{(q+1)}(r)
$$
holds and (\ref{eq:H^q}) is well-defined for each $q\in\mathbb{Z}_{+}$.
By (\ref{eq:zGamma(z)=Gamma(z+1)})
$$
\mathcal{H}_{\sigma,\gamma}^{(q)}(z)=-\frac{\Gamma(\frac{d}{2}+\gamma+\beta z)\Gamma(1-z)\Gamma(1+z)}{\Gamma(-\gamma-\beta z)\Gamma(1-\sigma+\alpha z)}z^{q-1},
$$
and thus  $\mathcal{H}_{\sigma,\gamma}^{(q)}(z)$ does not have a pole at
$z=0$ if $q\geq1$. Furthermore, $\mathcal{H}_{\sigma,\gamma}^{(q)}$
has a pole at $z=-k-1$ of order at most 2 for each $k\in\mathbb{Z}_{+}$.
Let us denote by
$$
\mathfrak{c}_{1,k}:=k+1,\quad\mathfrak{d}_{1,k}:=-\frac{\frac{d}{2}+\gamma+k}{\beta},\quad\mathfrak{d}_{2,k}:=-1-k
$$
the new elements of $P_{1}$ and $P_{2}$ for $\mathbb{H}_{\sigma,\gamma}^{(q)}(r)$.
Then due to (\ref{eq:residue approxi large x}) again, we obtain an
analogue of Lemma \ref{lem:H large r}.
\begin{lem}
								\label{lem:H^q large}
Let $q\in\mathbb{N}$. It holds that
$$
\mathbb{H}_{\sigma,\gamma}^{(q)}(r)\sim r^{-1}
$$
as $r\rightarrow\infty$. Additionally, if $\beta\in\mathbb{N}$ and $\gamma=0$, then there exists a constant $c=c(d, \alpha,\beta,\sigma, |q|)$ such that
$$
\mathbb{H}_{\sigma,0}^{(q)}(r)=O\left(\exp\left\{-cr^{1/(2\beta-\alpha)}\right\}\right)
$$
as $r\rightarrow\infty$.
\end{lem}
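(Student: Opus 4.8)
The plan is to mimic the proof of Lemma \ref{lem:H large r}, reading off the behaviour at infinity from the residues of $\mathcal{H}_{\sigma,\gamma}^{(q)}(z)r^{-z}$ at the poles lying to the right of the contour $L$. The crucial observation is that, since $q\geq 1$, the factor $z^{q}=(\Gamma(1+z)/\Gamma(z))^{q}$ cancels the pole of $\Gamma(-z)$ at $z=0$, so the rightmost pole of $\mathcal{H}_{\sigma,\gamma}^{(q)}$ is now $z=\hat{\mathfrak{c}}_{0}=\mathfrak{c}_{1,0}=1$, produced by the factor $\Gamma(1-z)$ in the rewritten expression \eqref{eq:cH^q} for $\mathcal{H}_{\sigma,\gamma}^{(q)}(z)$. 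Multiplying by the polynomial $z^{q}$ leaves $\omega=2\beta-\alpha$ and $\alpha^{*}=2-\alpha>0$ unchanged, so I would invoke \eqref{eq:residue approxi large x} of Proposition \ref{prop:asymptotic expansion} when $\omega>0$, and Proposition \ref{prop:H function residue}(ii)--(iii) when $\omega\leq 0$ (the latter valid for $r>\eta$); in every case this yields, for $r\geq 1$,
\begin{align*}
\mathbb{H}_{\sigma,\gamma}^{(q)}(r)=\textnormal{Res}_{z=1}\left[\mathcal{H}_{\sigma,\gamma}^{(q)}(z)r^{-z}\right]+O(r^{-2}).
\end{align*}

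Next I would compute this leading residue. Using $\textnormal{Res}_{z=1}\Gamma(1-z)=-1$ together with the rewritten form of $\mathcal{H}_{\sigma,\gamma}^{(q)}(z)$, a direct evaluation at $z=1$ (where $\Gamma(2)=1$ and $1^{q-1}=1$) gives
\begin{align*}
\textnormal{Res}_{z=1}\left[\mathcal{H}_{\sigma,\gamma}^{(q)}(z)\right]=\frac{\Gamma(\frac{d}{2}+\gamma+\beta)}{\Gamma(-\gamma-\beta)\Gamma(1-\sigma+\alpha)}.
\end{align*}
This coincides with the coefficient appearing in the ``in particular'' part of Lemma \ref{lem:H large r}, which is consistent with the operator identity $\mathbb{H}_{\sigma,\gamma}^{(q)}=(-r\,d/dr)^{q}\mathbb{H}_{\sigma,\gamma}$: the constant term of $\mathbb{H}_{\sigma,\gamma}$ is annihilated by $-r\,d/dr$ while each $r^{-1}$ term is an eigenfunction with eigenvalue $1$. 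Since this residue is non-zero, the expansion above gives $\mathbb{H}_{\sigma,\gamma}^{(q)}(r)\sim r^{-1}$ as $r\to\infty$.

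For the exponential bound I would specialise to $\beta\in\mathbb{N}$, $\gamma=0$. By Remark \ref{rem:removable singularity}(ii) and \eqref{eq:gamma=0 beta natural}, $\mathcal{H}_{\sigma,0}$ already has $P_{2}=\emptyset$; writing $z^{q}=(\Gamma(1+z)/\Gamma(z))^{q}$ and absorbing the $q$ extra numerator factors $\Gamma(1+z)$ (whose poles lie to the left) and the $q$ extra denominator factors $\Gamma(z)$ shows that $\mathcal{H}_{\sigma,0}^{(q)}$ is again an $\textnormal{H}_{\nu\mu}^{m0}$-type integrand with $P_{2}=\emptyset$. A short bookkeeping confirms that none of $\omega=2\beta-\alpha>0$, $\alpha^{*}=2-\alpha>0$, nor the angle $\frac{\beta+1-\alpha}{2\beta-\alpha}$ (whose cosine is negative, as in Lemma \ref{lem:H large r}) is altered by these factors, so Proposition \ref{prop:exponential decay} applies verbatim and produces $\mathbb{H}_{\sigma,0}^{(q)}(r)=O(\exp\{-c\,r^{1/(2\beta-\alpha)}\})$; the only effect of $z^{q}$ is to shift the algebraic prefactor exponent $\Lambda$, which is harmless inside the $O$.

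The step I expect to require the most care is precisely this exponential estimate: since $z^{q}$ is not itself a ratio of gamma functions, one must genuinely rewrite it as $(\Gamma(1+z)/\Gamma(z))^{q}$ and re-identify the Fox H parameters before Proposition \ref{prop:exponential decay} becomes applicable. A cleaner alternative, avoiding this bookkeeping, is to use $\mathbb{H}_{\sigma,0}^{(q)}=(-r\,d/dr)^{q}\mathbb{H}_{\sigma,0}$ and to check directly that $-r\,d/dr$ maps $O(\exp\{-c\,r^{a}\})$ into $O(\exp\{-c'\,r^{a}\})$ for any $c'<c$, so that the decay of Lemma \ref{lem:H large r} survives $q$ differentiations. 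For the first assertion the one delicate point is the non-vanishing of the leading residue: it degenerates exactly when $\gamma+\beta\in\mathbb{N}$ or $\sigma-\alpha\in\mathbb{N}$, and in those borderline cases the decay is in fact faster than $r^{-1}$, which remains compatible with the upper bounds used in the sequel.
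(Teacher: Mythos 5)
Your proposal is essentially the paper's own proof: the paper likewise observes that the factor $z^{q}$ removes the pole of $\Gamma(-z)$ at $z=0$, so that the right-hand poles become $\hat{\mathfrak{c}}_{k}=k+1$, expands $\mathbb{H}_{\sigma,\gamma}^{(q)}$ in the residues at these poles (computing $\textnormal{Res}_{z=k+1}\left[\mathcal{H}_{\sigma,\gamma}^{(q)}(z)r^{-z}\right]=\frac{(-1)^{k}\Gamma(\frac{d}{2}+\gamma+\beta+\beta k)}{\Gamma(-\gamma-\beta-\beta k)\Gamma(1-\sigma+\alpha+\alpha k)}(k+1)^{q}r^{-k-1}$, whose $k=0$ term is exactly your leading residue), and for $\beta\in\mathbb{N}$, $\gamma=0$ applies Proposition \ref{prop:exponential decay} after the same reduction via \eqref{eq:gamma=0 beta natural} as in Lemma \ref{lem:H large r}, with your parameter bookkeeping ($\omega$, $\alpha^{*}$, $\eta$ and the angle unchanged, $\Lambda$ shifted by $q$) being the detail the paper leaves implicit. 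Two remarks: your observation that the two-sided bound ``$\sim r^{-1}$'' degenerates when $\gamma+\beta\in\mathbb{N}$ or $\sigma-\alpha\in\mathbb{N}$ is a genuine (though harmless, since only upper bounds are used later) gap that the paper's proof silently shares; on the other hand, your ``cleaner alternative'' for the exponential bound is not valid as stated, because an $O(\exp\{-cr^{a}\})$ bound on $\mathbb{H}_{\sigma,0}$ gives no pointwise control on $-r\,d/dr$ of it without a further argument (e.g.\ Cauchy estimates using analyticity in a sector), so the Fox H reparametrization route you took as primary is the one to keep.
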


\begin{proof}
The proof is similar to the one of Lemma \ref{lem:H large r}. The
only difference is $\hat{\mathfrak{c}}_{k}=\mathfrak{c}_{1,k}=k+1$ which implies
$$
\textnormal{Res}_{z=\hat{\mathfrak{c}}_{k}}\left[\mathcal{H}_{\sigma,\gamma}^{(q)}(z)r^{-z}\right]=\frac{(-1)^{k}\cdot\Gamma(\frac{d}{2}+\gamma+\beta+\beta k)}{\Gamma(-\gamma-\beta-\beta k)\Gamma(1-\sigma+\alpha+\alpha k)}(k+1)^{q}r^{-k-1}
$$
for each $k\in\mathbb{Z}_{+}$. The lemma is proved.
\end{proof}

Lastly, we present another result which is necessary to obtain the
upper estimates of classical derivatives of $p(t,x)$. Recall $\omega=2\beta-\alpha$.
Let $\kappa_{1}$, $\kappa_{2}$, $\hat{\kappa}_{1}$, and $\hat{\kappa}_{2}$
denote constants
$$
\kappa_{1}:=\textnormal{Res}_{z=\mathfrak{d}_{1,0}}\left[\mathcal{H}_{\sigma,\gamma}(z)\right],\quad \kappa_{2}:=\textnormal{Res}_{z=\mathfrak{d}_{2,0}}\left[\mathcal{H}_{\sigma,\gamma}(z)\right]
$$
$$
\hat{\kappa}_{1}:=\lim_{z\rightarrow \mathfrak{d}_{1,0}}(z-\mathfrak{d}_{1,0})^{2}\mathcal{H}_{\sigma,\gamma}(z),\quad \hat{\kappa}_{2}:=\lim_{z\rightarrow \mathfrak{d}_{2,0}}(z-\mathfrak{d}_{2,0})^{2}\mathcal{H}_{\sigma,\gamma}(z)
$$
which are independent of $q\geq 1$. Note that $\hat{\kappa}_{2}=0$ if $\sigma+\alpha \in \mathbb{N}$.

\begin{lem}
						\label{lem:H^q small}
Let $q\in\mathbb{Z}_{+}$ and $\gamma\in[0,\infty)$.

(i) If $\gamma\notin\{\beta-\frac{d}{2},\beta-\frac{d}{2}-1,2\beta-\frac{d}{2} \}$, then
\begin{align*}
\mathbb{H}_{\sigma,\gamma}^{(q)}(r) & =\frac{\omega}{|\omega|}\cdot \left(-\frac{d+2\gamma}{2\beta}\right)^{q}\cdot\kappa_{1}r^{\frac{d+2\gamma}{2\beta}}+O(r^{\min\left(1,\frac{d+2\gamma+2}{2\beta}\right)}),\quad \left(\frac{0}{0}:=1\right)
\end{align*}
as $r\rightarrow0$. Additionally, if $\gamma=\beta$ or $\sigma+\alpha\in\mathbb{N}$,
then
$$
\mathbb{H}_{\sigma,\gamma}^{(q)}(r)=\frac{\omega}{|\omega|}\cdot\left(-\frac{d+2\gamma}{2\beta}\right)^{q}\cdot\kappa_{1}r^{\frac{d+2\gamma}{2\beta}}+\begin{cases}
O(r^{\min\left(2,\frac{d+2\gamma+2}{2\beta}\right)}) & :\gamma\neq2\beta-\frac{d}{2}-1\\
O(r^{2}|\ln r|) & :\gamma=2\beta-\frac{d}{2}-1
\end{cases}
$$
as $r\rightarrow0$. If $\alpha=1$ and $\sigma=0$, then

$$
\mathbb{H}_{0,\gamma}^{(q)}(r)=\frac{\omega}{|\omega|}\cdot\left(-\frac{d+2\gamma}{2\beta}\right)^{q}\cdot\kappa_{1}r^{\frac{d+2\gamma}{2\beta}}+O(r^{\frac{d+2\gamma+2}{2\beta}})
$$
as $r\rightarrow0$.

(ii)  If $\gamma=\beta-\frac{d}{2}$, then
\begin{align*}
\mathbb{H}_{\sigma,\gamma}^{(q)}(r) =\frac{\omega}{|\omega|}\cdot\left(-\frac{d+2\gamma}{2\beta}\right)^{q}\cdot (\hat{\kappa}_{2}\ln r+\kappa_{2}) r+O(r)
\end{align*}
as $r\rightarrow0$. Additionally, if $\sigma+\alpha\in\mathbb{N}$, then
\begin{align*}
\mathbb{H}_{\sigma,\gamma}^{(q)}(r) =-\frac{\omega}{|\omega|}\cdot\left(-\frac{d+2\gamma}{2\beta}\right)^{q}\cdot \kappa_{2} r+
\begin{cases}
O(r^{\min(2,\frac{d+2\gamma+2}{2\beta})}) &: \beta\neq 1 \\
O(r^{2}|\ln r|) &:\beta =1 
\end{cases}
\end{align*}
as $r\rightarrow 0$.

(iii) If $\gamma=\beta-\frac{d}{2}-1$, then
$$
\mathbb{H}_{\sigma,\gamma}^{(q)}(r)=\frac{\omega}{|\omega|}\cdot\left(-\frac{d+2\gamma}{2\beta}\right)^{q}\cdot\kappa_{1}r^{\frac{d+2\gamma}{2\beta}}+O(r|\ln r|)
$$
as $r\rightarrow0$. Additionally, if $\sigma+\alpha\in\mathbb{N}$, then
$$
\mathbb{H}_{\sigma,\gamma}^{(q)}(r)=\frac{\omega}{|\omega|}\cdot\left(-\frac{d+2\gamma}{2\beta}\right)^{q}\cdot\kappa_{1}r^{\frac{d+2\gamma}{2\beta}}+O(r)
$$
as $r\to 0$.

(iv) If $\gamma=2\beta-\frac{d}{2}$, then
$$
\mathbb{H}_{\sigma,\gamma}^{(q)}(r)=\frac{\omega}{|\omega|}\cdot\left(-\frac{d+2\gamma}{2\beta}\right)^{q}\cdot \left(\hat{\kappa}_{1}\ln r+\kappa_{1}\right)r^{2}+O(r)
$$
as $r\rightarrow 0$. Additionally, if $\sigma+\alpha\in\mathbb{N}$, then
$$
\mathbb{H}_{\sigma,\gamma}^{(q)}(r)=\frac{\omega}{|\omega|}\cdot\left(-\frac{d+2\gamma}{2\beta}\right)^{q}\cdot \left(\hat{\kappa}_{1}\ln r+\kappa_{1}\right)r^{2}+O(r^{2})
$$
as $r\to0$.
\end{lem}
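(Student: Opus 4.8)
The plan is to read off the small-$r$ behavior of $\mathbb{H}_{\sigma,\gamma}^{(q)}(r)$ directly from the residue expansion at the left poles $P_{1}=\{\mathfrak{d}_{1,k},\mathfrak{d}_{2,k}\}$. First I would record that, by \eqref{eq:zGamma(z)=Gamma(z+1)}, the factor $\{\Gamma(1+z)/\Gamma(z)\}^{q}$ equals $z^{q}$, so $\mathcal{H}_{\sigma,\gamma}^{(q)}(z)=z^{q}\mathcal{H}_{\sigma,\gamma}(z)$; in particular the residue of $\mathcal{H}_{\sigma,\gamma}^{(q)}(z)r^{-z}$ at a \emph{simple} pole $z_{0}\in P_{1}$ is $z_{0}^{q}\,\mathrm{Res}_{z=z_{0}}[\mathcal{H}_{\sigma,\gamma}]\,r^{-z_{0}}$. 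This is the source of the factor $(-\tfrac{d+2\gamma}{2\beta})^{q}$ multiplying $\kappa_{1}$, since $\mathfrak{d}_{1,0}=-\tfrac{d+2\gamma}{2\beta}$. The prefactor $\tfrac{\omega}{|\omega|}$ records which proposition supplies the expansion as $r\to0$: for $\omega>0$ (and, on the branch $r<\eta$ relevant as $r\to0$, also for $\omega=0$) Proposition \ref{prop:H function residue} gives $\mathbb{H}_{\sigma,\gamma}^{(q)}(r)=+\sum_{k}\mathrm{Res}_{z=\hat{\mathfrak{d}}_{k}}[\cdots]$, whereas for $\omega<0$ equation \eqref{eq:residue approxi small x} of Proposition \ref{prop:asymptotic expansion} gives $-\sum_{k=0}^{p}\mathrm{Res}_{z=\hat{\mathfrak{d}}_{k}}[\cdots]+O(r^{M})$; collecting the two cases produces exactly the $\tfrac{\omega}{|\omega|}$ in front.

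The core step is to order the four lowest left poles $\mathfrak{d}_{1,0}=-\tfrac{d+2\gamma}{2\beta}$, $\mathfrak{d}_{2,0}=-1$, $\mathfrak{d}_{1,1}=-\tfrac{d+2\gamma+2}{2\beta}$, $\mathfrak{d}_{2,1}=-2$, and to observe that the three exceptional values in (ii)--(iv) are exactly the coincidences $\mathfrak{d}_{1,0}=\mathfrak{d}_{2,0}$ $(\gamma=\beta-\tfrac{d}{2})$, $\mathfrak{d}_{1,1}=\mathfrak{d}_{2,0}$ $(\gamma=\beta-\tfrac{d}{2}-1)$, and $\mathfrak{d}_{1,0}=\mathfrak{d}_{2,1}$ $(\gamma=2\beta-\tfrac{d}{2})$. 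In the generic case (i) all four are simple, so I would display the $\mathfrak{d}_{1,0}$ residue explicitly---it is the leading term when $\gamma<\beta-\tfrac{d}{2}$ and a subdominant one otherwise---and absorb every other pole (in particular the $\mathfrak{d}_{2,0}$ contribution of order $r$ and the $\mathfrak{d}_{1,1}$ one of order $r^{(d+2\gamma+2)/(2\beta)}$) into the remainder, which yields the stated $O(r^{\min(1,(d+2\gamma+2)/(2\beta))})$. The refinements in (i) come from the vanishing of the $\mathfrak{d}_{2,0}$ residue, improving the error from $O(r)$ to $O(r^{\min(2,\dots)})$.

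At each exceptional $\gamma$ a double pole forms, and I would compute its contribution from $\tfrac{d}{dz}\big[(z-z_{0})^{2}\mathcal{H}_{\sigma,\gamma}^{(q)}(z)r^{-z}\big]\big|_{z=z_{0}}$. Since differentiating $r^{-z}$ produces a factor $-\ln r$, the double-pole residue takes the form $(\text{const})\,r^{-z_{0}}-z_{0}^{q}\hat{\kappa}\,r^{-z_{0}}\ln r$, where $\hat{\kappa}=\lim_{z\to z_{0}}(z-z_{0})^{2}\mathcal{H}_{\sigma,\gamma}(z)$; this is precisely how the logarithmic terms with coefficients $\hat{\kappa}_{2}$ (at $z_{0}=\mathfrak{d}_{2,0}=-1$, case (ii)) and $\hat{\kappa}_{1}$ (at $z_{0}=\mathfrak{d}_{1,0}=-2$, case (iv)) enter, while the non-logarithmic piece supplies $\kappa_{2}$ or $\kappa_{1}$. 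In case (iii) the coincidence $\mathfrak{d}_{1,1}=\mathfrak{d}_{2,0}$ sits at the \emph{second} pole, so the leading $r^{(d+2\gamma)/(2\beta)}$ term is untouched and only the error degrades to $O(r|\ln r|)$.

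For the ``additionally'' clauses I would use that at $z=-1$ the numerator factor $\Gamma(1+z)$ is singular but is cancelled whenever $\sigma+\alpha\in\mathbb{N}$ (then $1-\sigma-\alpha\in\mathbb{Z}_{\le0}$, so the denominator $\Gamma(1-\sigma+\alpha z)$ is singular at $z=-1$) or whenever $\gamma=\beta$ (by the removable singularity in Remark \ref{rem:removable singularity}(i)). In either situation the $\mathfrak{d}_{2,0}$ pole disappears: its simple residue vanishes and $\hat{\kappa}_{2}=0$, which deletes the logarithm in case (ii) and pushes the remainder in cases (i) and (iii) to the next pole. The subcase $\alpha=1$, $\sigma=0$ is immediate from Remark \ref{rem:removable singularity}(iii), where the $\mathfrak{d}_{2,k}$ family is absent altogether, so only the $\mathfrak{d}_{1,k}$ residues survive and the error is $O(r^{(d+2\gamma+2)/(2\beta)})$. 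The main obstacle is organizational rather than conceptual: keeping consistent track of which of the four poles is leading and which is absorbed into the error across all parameter regimes, and computing the double-pole residues---with their $\pm\ln r$ and sign conventions---so that the constants agree with the stated $\kappa_{i},\hat{\kappa}_{i}$.
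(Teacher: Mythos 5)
Your proposal is correct and follows essentially the same route as the paper's proof: dispose of $\omega\geq 0$ via Proposition \ref{prop:H function residue}, use \eqref{eq:residue approxi small x} for $\omega<0$ (whence the $\tfrac{\omega}{|\omega|}$ prefactor), write $\mathcal{H}_{\sigma,\gamma}^{(q)}(z)=z^{q}\mathcal{H}_{\sigma,\gamma}(z)$ so simple poles pick up the factor $\bigl(-\tfrac{d+2\gamma}{2\beta}\bigr)^{q}$, and compare the four lowest poles $\mathfrak{d}_{1,0},\mathfrak{d}_{2,0},\mathfrak{d}_{1,1},\mathfrak{d}_{2,1}$, computing $\tfrac{d}{dz}$-residues with $\ln r$ terms at the coincidences and using the vanishing of the $z=-1$ contribution when $\gamma=\beta$, $\sigma+\alpha\in\mathbb{N}$, or $\alpha=1,\sigma=0$. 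The one detail you omit is the fourth coincidence $\mathfrak{d}_{1,1}=\mathfrak{d}_{2,1}$ (i.e.\ $\gamma=2\beta-\tfrac{d}{2}-1$), which is precisely what produces the $O(r^{2}|\ln r|)$ error terms in the refined clauses of (i) and (ii); your double-pole machinery covers it, but it should be tracked explicitly.
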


\begin{proof}
Due to Proposition \ref{prop:H function residue}, one can easily
see our assertions hold if $\omega\geq0$. Hence, we assume $\omega<0$. Recall
(\ref{eq:residue approxi small x}) and take $M\geq2$ satisfying (\ref{eq:M}).

(i) If $\gamma\neq\beta-\frac{d}{2}$, $\gamma\neq\beta-\frac{d}{2}-1$, and $\gamma\neq 2\beta-\frac{d}{2}$,
then $\mathfrak{d}_{1,0}\neq\mathfrak{d}_{2,0}$, $\mathfrak{d}_{1,1}\neq\mathfrak{d}_{2,0}$, and $\mathfrak{d}_{1,0}\neq\mathfrak{d}_{2,1}$. Hence  $\mathcal{H}_{\sigma,\gamma}^{(q)}(z)$ has simple poles at $\mathfrak{d}_{1,0}=-\frac{d+2\gamma}{2\beta}$ and $\mathfrak{d}_{2,0}=-1$.
Therefore,
\begin{align}
 \mathbb{H}_{\sigma,\gamma}^{(q)}(r)&=-\sum_{j=1}^{2}\textnormal{Res}_{z=\mathfrak{d}_{j,0}}\left[\mathcal{H}_{\sigma,\gamma}^{(q)}(z)r^{-z}\right]-\textnormal{Res}_{z=\mathfrak{d}_{2,1}}\left[\mathcal{H}_{\sigma,\gamma}^{(q)}(z)r^{-z}\right]+O(r^{M})\nonumber \\
 & =-\left(\kappa_{1}(\mathfrak{d}_{1,0})^{q}r^{-\mathfrak{d}_{1,0}}+\kappa_{2}(\mathfrak{d}_{2,0})^{q}r^{-\mathfrak{d}_{2,0}}\right)\nonumber\\
 &\qquad\qquad\qquad\qquad-\textnormal{Res}_{z=-2}\left[\mathcal{H}_{\sigma,\gamma}^{(q)}(z)r^{-z}\right]+O(r^{\min\left(2,\frac{d+2\gamma+2}{2\beta}\right)})\nonumber \\
 & =-\left(-\frac{d+2\gamma}{2\beta}\right)^{q}\kappa_{1}r^{\frac{d+2\gamma}{2\beta}}-(-1)^{q}\cdot\kappa_{2}r\label{eq:second term-1}\\
 & \qquad\qquad\qquad\qquad-\textnormal{Res}_{z=-2}\left[\mathcal{H}_{\sigma,\gamma}^{(q)}(z)r^{-z}\right]+O(r^{\min\left(2,\frac{d+2\gamma+2}{2\beta}\right)})\nonumber
\end{align}
as $r\rightarrow0$. Note that $\kappa_{2}=0$ if $\gamma=\beta$ or $\sigma+\alpha\in\mathbb{N}$ so the second term of (\ref{eq:second term-1})
vanishes. In that case, observe that
$$
\textnormal{Res}_{z=-2}[\mathcal{H}_{\sigma,\gamma}^{(q)}(z)r^{-z}]=
\begin{cases}
O(r^{2}) &: \gamma\neq 2\beta-\frac{d}{2}-1\\
O(r^{2}|\ln r|) &: \gamma=2\beta-\frac{d}{2}-1.
\end{cases}
$$
Furthermore, if $\alpha=1$ and $\sigma=0$, then 
$$
\textnormal{Res}_{z=\mathfrak{d}_{2,k}}[\mathcal{H}_{\sigma,\gamma}^{(q)}(z)r^{-z}]=0
$$ 
for all $k\in\mathbb{Z}_{+}$. Thus (i) is proved.

(ii) Suppose $\gamma=\beta-\frac{d}{2}$. Then $\hat{\mathfrak{d}}_{0}=\mathfrak{d}_{1,0}=\mathfrak{d}_{2,0}$, $\kappa_{1}=\kappa_{2}$, and $\hat{\kappa}_{1}=\hat{\kappa}_{2}$. Note that
 $\mathcal{H}_{\sigma,\gamma}^{(q)}(z)$ has a pole at
$$
\hat{\mathfrak{d}}_{0}=-\frac{d+2\gamma}{2\beta}=-1
$$
of order 2. By \eqref{eq:cH^q},
\begin{align}
\textnormal{Res}_{z=-1}  \left[\mathcal{H}_{\sigma,\gamma}^{(q)}(z)r^{-z}\right]&=\lim_{z\rightarrow -1}\frac{d}{dz}\left((z+1)^{2}\mathcal{H}_{\sigma,\gamma}(z)z^{q}r^{-z}\right)\nonumber
\\ &=\left(-1\right)^{q}\cdot\hat{\kappa}_{2}r\ln r+\textnormal{Res}_{z=-1}\left[\mathcal{H}_{\sigma,\gamma}^{(q)}(z)\right]r\nonumber\\
&=\left(-1\right)^{q}\cdot\hat{\kappa}_{2}r\ln r+(-1)^{q}\cdot \left(\kappa_{2}-q\hat{\kappa}_{2}\right) r \nonumber \\
&=(-1)^{q}\cdot (\hat{\kappa}_{2}\ln r+\kappa_{2}) r - q(-1)^{q}\hat{\kappa}_{2}r\label{eq:log residue}
\end{align}
Therefore,
\begin{align*}
\mathbb{H}_{\sigma,\gamma}^{(q)}(r) =-(-1)^{q}\cdot (\hat{\kappa}_{2}\ln r+\kappa_{2}) r+q(-1)^{q}\hat{\kappa}_{2}r+
\begin{cases}
O(r^{\min(2,\frac{d+2\gamma+2}{2\beta})}) &: \beta\neq 1 \\
O(r^{2}|\ln r|) &:\beta =1 
\end{cases}
\end{align*}
as $r\rightarrow0$. If we additionally assume $\sigma+\alpha\in\mathbb{N}$, then $\hat{\kappa}_{2}=0$. Thus (ii) is proved.

(iii) Now we let $\gamma=\beta-\frac{d}{2}-1$. Then $\mathfrak{d}_{1,1}=\mathfrak{d}_{2,0}$ and
 $\mathcal{H}_{\sigma,\gamma}^{(q)}(z)$ has a pole at
$$
-\frac{d+2\gamma+2}{2\beta}=\mathfrak{d}_{1,1}=\mathfrak{d}_{2,0}=-1
$$
 of order 2, and  (\ref{eq:log residue}) holds. Also note that $\mathcal{H}_{\sigma,\gamma}^{(q)}(z)$
has a simple pole at 
$$
z=\mathfrak{d}_{1,0}=-\frac{d+2\gamma}{2\beta}=\frac{1}{\beta}-1.
$$
Therefore,
\begin{align*}
\mathbb{H}_{\sigma,\gamma}^{(q)}(r) & =-\textnormal{Res}_{z=-\frac{d+2\gamma}{2\beta}}\left[\mathcal{H}_{\sigma,\gamma}^{(q)}(z)r^{-z}\right]-\textnormal{Res}_{z=-1}\left[\mathcal{H}_{\sigma,\gamma}^{(q)}(z)r^{-z}\right]+O(r)\\
 & =-\left(-\frac{d+2\gamma}{2\beta}\right)^{q}\kappa_{1}r^{\frac{d+2\gamma}{2\beta}}+O(r|\ln r|)
\end{align*}
as $r\rightarrow0$. If $\sigma+\alpha\in\mathbb{N}$, then $\hat{\kappa}_{2}=0$ and
$$
\mathbb{H}_{\sigma,\gamma}^{(q)}(r) =-\left(-\frac{d+2\gamma}{2\beta}\right)^{q}\kappa_{1}r^{\frac{d+2\gamma}{2\beta}}+O(r)
$$
Thus (iii) is proved.

(iv) Finally, we assume $\gamma=2\beta-\frac{d}{2}$. Then $\mathfrak{d}_{1,0}=\mathfrak{d}_{2,1}$ and $\mathcal{H}_{\sigma,\gamma}^{(q)}(z)$ has a pole at
$$
-\frac{d+2\gamma}{2\beta}=\mathfrak{d}_{1,0}=\mathfrak{d}_{2,1}=-2
$$
of order 2. By \eqref{eq:cH^q},
\begin{align*}
&\textnormal{Res}_{z=-2}\left[\mathcal{H}_{\sigma,\gamma}^{(q)}(z)r^{-z}\right]\\
&=\lim_{z\rightarrow -2}\frac{d}{dz}\left((z+2)^{2}\mathcal{H}_{\sigma,\gamma}(z)z^{q}r^{-z}\right)\\
&=\left(-\frac{d+2\gamma}{2\beta}\right)\cdot \hat{\kappa}_{1}r^{2} \ln r+\lim_{z\rightarrow -2}\frac{d}{dz}\left((z+2)^{2}\mathcal{H}_{\sigma,\gamma}(z)z^{q}\right) r^{2}\\
&=\left(-\frac{d+2\gamma}{2\beta}\right)^{q}\cdot \left(\hat{\kappa}_{1}\ln r+\kappa_{1}\right)r^{2}+q\left(-\frac{d+2\gamma}{2\beta}\right)^{q-1}\hat{\kappa}_{1}r^2.
\end{align*}
Note that $\mathcal{H}_{\sigma,\gamma}^{(q)}(z)$ has a simple pole at $z=\mathfrak{d}_{2,0}=-1$. Therefore, 
\begin{align}
\mathbb{H}_{\sigma,\gamma}^{(q)}(r) & =-\textnormal{Res}_{z=-\frac{d+2\gamma}{2\beta}}\left[\mathcal{H}_{\sigma,\gamma}^{(q)}(z)r^{-z}\right]-\textnormal{Res}_{z=-1}\left[\mathcal{H}_{\sigma,\gamma}^{(q)}(z)r^{-z}\right]+O(r^{2})\nonumber\\
&= -\left(-\frac{d+2\gamma}{2\beta}\right)^{q}\cdot \left(\hat{\kappa}_{1}\ln r+\kappa_{1}\right)r^{2}-\kappa_{2}r+O(r^{2})\label{eq:5-8-1}\\
 & =-\left(-\frac{d+2\gamma}{2\beta}\right)^{q}\cdot \left(\hat{\kappa}_{1}\ln r+\kappa_{1}\right)r^{2}+O(r)\nonumber
\end{align}
as $r\rightarrow0$. Additionally, if $\sigma+\alpha\in\mathbb{N}$, then $\kappa_{2}=0$ in \eqref{eq:5-8-1} hence we obtain the desired result. The lemma is proved.
\end{proof}

\section{\label{sec:representation of solution} Explicit representation of
$p(t,x)$ and its derivatives}

Take the function $\mathbb{H}_{\sigma,\gamma}(r)$ from (\ref{eq:kernel representation}),
and define
$$
p_{\sigma,\gamma}(t,x):=\frac{2^{2\gamma}}{\pi^{d/2}}|x|^{-d-2\gamma}t^{-\sigma}\mathbb{H}_{\sigma,\gamma}\left(\frac{|x|^{2\beta}}{2^{2\beta}}t^{-\alpha}\right),\quad(t,x)\in(0,\infty)\times\mathbb{R}_{0}^{d}.
$$
Write
\begin{align}
					\label{eq:fundamental solution}
p(t,x):=p_{0,0}(t,x)=\frac{|x|^{-d}}{\pi^{d/2}}\mathbb{H}_{0,0}\left(\frac{|x|^{2\beta}}{2^{2\beta}}t^{-\alpha}\right).
\end{align}

Note that $\mathbb{H}_{\sigma,\gamma}(r)$ is a bounded function of $r\in (0,\infty)$. As a corollary of Lemmas \ref{lem:H large r} and \ref{lem:H small r},
we obtain the following upper estimates of $p_{\sigma,\gamma}(t,x)$ for $\gamma\in[0,\infty)$ and $\sigma\in\mathbb{R}$.

\begin{thm}
								\label{thm: derivative estimate}
Let $\alpha\in(0,2)$, $\beta\in(0,\infty)$,
$\gamma\in[0,\infty)$, and $\sigma\in\mathbb{R}$. Then for $|x|^{2\beta}t^{-\alpha}\geq1$
$$
|p_{\sigma,\gamma}(t,x)|\apprle|x|^{-d-2\gamma}t^{-\sigma}
$$
and for $|x|^{2\beta}t^{-\alpha}\leq1$
$$
\left|p_{\sigma,\gamma}(t,x)\right|\apprle\begin{cases}
t^{-\sigma-\frac{\alpha(d+2\gamma)}{2\beta}} & :\gamma<\beta-\frac{d}{2}\\
|x|^{-d-2\gamma+2\beta}t^{-\sigma-\alpha}\left(1+|\ln\left(|x|^{2\beta}t^{-\alpha}\right)|\right) & :\gamma=\beta-\frac{d}{2}\\
|x|^{-d-2\gamma+2\beta}t^{-\sigma-\alpha} & :\gamma>\beta-\frac{d}{2}.
\end{cases}
$$
 Furthermore,

(i) If $\sigma+\alpha\in\mathbb{N}$, then for $|x|^{2\beta}t^{-\alpha}\leq1$
$$
\left|p_{\sigma,\gamma}(t,x)\right|\apprle\begin{cases}
t^{-\sigma-\frac{\alpha(d+2\gamma)}{2\beta}} & :\gamma<2\beta-\frac{d}{2}\\
|x|^{-d-2\gamma+4\beta}t^{-\sigma-2\alpha}\left(1+\left|\ln\left(|x|^{2\beta}t^{-\alpha}\right)\right|\right) & :\gamma=2\beta-\frac{d}{2}\\
|x|^{-d-2\gamma+4\beta}t^{-\sigma-2\alpha} & :\gamma>2\beta-\frac{d}{2}.
\end{cases}
$$

(ii) If $\alpha=1$ and $\sigma=0$, then for $|x|^{2\beta}t^{-\alpha}\leq 1$

$$
|p_{0,\gamma}(t,x)|\apprle t^{-\frac{d+2\gamma}{2\beta}}.
$$

(iii) If $\gamma\in\mathbb{Z}_{+}$, then for $|x|^{2\beta}t^{-\alpha}\geq1$
$$
\left|p_{\sigma,\gamma}(t,x)\right|\apprle|x|^{-d-2\gamma-2\beta}t^{-\sigma+\alpha}.
$$

(iv) If $\beta\in{\mathbb{N}}$ and $\gamma=0$, then there exists a constant $c=c(\alpha,\beta,\sigma)>0$ such that for $|x|^{2\beta}t^{-\alpha}\geq 1$,
$$
|p_{\sigma,0}(t,x)|\apprle |x|^{-d}t^{-\sigma} \exp\left\{-c(t^{-\alpha}|x|^{2\beta})^{\frac{1}{2\beta-\alpha}}\right\}.
$$

(v) If $\gamma=\beta$, then for $|x|^{2\beta}t^{-\alpha}\geq1$
$$
\left|p_{\sigma,\beta}(t,x)\right|\apprle\begin{cases}
|x|^{-d-4\beta}t^{-\sigma+\alpha} & :\sigma\in\mathbb{N}\\
|x|^{-d-2\beta}t^{-\sigma} & :\sigma\in\mathbb{R}\setminus\mathbb{N}
\end{cases}
$$
and for $|x|^{2\beta}t^{-\alpha}\leq1$
$$
\left|p_{\sigma,\beta}(t,x)\right|\apprle\begin{cases}
t^{-\sigma-\alpha-\frac{\alpha d}{2\beta}} & :\frac{d}{2}<\beta\\
|x|^{-d+2\beta}t^{-\sigma-2\alpha}\left(1+\left|\ln\left(|x|^{2\beta}t^{-\alpha}\right)\right|\right) & :\frac{d}{2}=\beta\\
|x|^{-d+2\beta}t^{-\sigma-2\alpha} & :\frac{d}{2}>\beta.
\end{cases}
$$

\end{thm}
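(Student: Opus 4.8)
The plan is to read off every estimate directly from the factorization
$$
p_{\sigma,\gamma}(t,x)=\frac{2^{2\gamma}}{\pi^{d/2}}\,|x|^{-d-2\gamma}t^{-\sigma}\,\mathbb{H}_{\sigma,\gamma}(r),\qquad r:=\frac{|x|^{2\beta}}{2^{2\beta}}t^{-\alpha},
$$
in which the two explicit prefactors $|x|^{-d-2\gamma}$ and $t^{-\sigma}$ already carry all the $|x|$- and $t$-dependence lying outside $\mathbb{H}_{\sigma,\gamma}$. Since $r$ and $|x|^{2\beta}t^{-\alpha}$ differ only by the fixed constant $2^{-2\beta}$, the regime $|x|^{2\beta}t^{-\alpha}\geq1$ corresponds to $r\geq 2^{-2\beta}$ and $|x|^{2\beta}t^{-\alpha}\leq1$ to $r\leq 2^{-2\beta}$. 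Thus every assertion becomes a statement about the size of $\mathbb{H}_{\sigma,\gamma}(r)$ for $r$ bounded below, respectively above, by a fixed constant, and these sizes are exactly what Lemmas \ref{lem:H large r} and \ref{lem:H small r} provide.

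For the large regime $r\geq 2^{-2\beta}$ I would argue as follows. The basic bound $|p_{\sigma,\gamma}|\apprle|x|^{-d-2\gamma}t^{-\sigma}$ is immediate from the boundedness of $\mathbb{H}_{\sigma,\gamma}$ on $(0,\infty)$. For the refinements (iii)--(v) I invoke the ``in particular'' parts of Lemma \ref{lem:H large r}: when $\gamma\in\mathbb{Z}_{+}$ (case (iii)) or $\sigma\in\mathbb{N}$ (the $\sigma\in\mathbb{N}$ alternative of case (v)) the constant term of $\mathbb{H}_{\sigma,\gamma}(r)$ vanishes, because $\Gamma(-\gamma)$ (resp. $\Gamma(1-\sigma)$) has a pole, leaving $|\mathbb{H}_{\sigma,\gamma}(r)|\apprle r^{-1}$ for $r\geq1$; on the compact range $2^{-2\beta}\leq r\leq1$ boundedness of $\mathbb{H}_{\sigma,\gamma}$ together with $r^{-1}\geq1$ gives the same bound, so $|\mathbb{H}_{\sigma,\gamma}(r)|\apprle r^{-1}$ throughout. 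Substituting $r^{-1}\apprle|x|^{-2\beta}t^{\alpha}$ turns the prefactor $|x|^{-d-2\gamma}t^{-\sigma}$ into $|x|^{-d-2\gamma-2\beta}t^{-\sigma+\alpha}$ (resp. $|x|^{-d-4\beta}t^{-\sigma+\alpha}$ for $\gamma=\beta$), which are precisely the claimed bounds. Case (iv) is identical except that Lemma \ref{lem:H large r} supplies the exponential factor $\exp\{-cr^{1/(2\beta-\alpha)}\}$; since $r^{1/(2\beta-\alpha)}$ is a fixed positive constant multiple of $(|x|^{2\beta}t^{-\alpha})^{1/(2\beta-\alpha)}$, this matches the stated exponential decay after adjusting $c$.

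For the small regime $r\leq 2^{-2\beta}$ I would feed the three-case asymptotics of Lemma \ref{lem:H small r} into the same factorization. The unconditional estimates come from $\mathbb{H}_{\sigma,\gamma}(r)$ behaving like $r^{(d+2\gamma)/(2\beta)}$, $r|\ln r|$, or $r$ according to the sign of $\gamma-(\beta-\frac{d}{2})$; substituting $r^{(d+2\gamma)/(2\beta)}\apprle|x|^{d+2\gamma}t^{-\alpha(d+2\gamma)/(2\beta)}$ collapses the $|x|$-powers and yields $t^{-\sigma-\alpha(d+2\gamma)/(2\beta)}$, while $r\apprle|x|^{2\beta}t^{-\alpha}$ yields $|x|^{-d-2\gamma+2\beta}t^{-\sigma-\alpha}$. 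The refined part (i), the improvement (ii) for $\alpha=1,\sigma=0$, and the $\gamma=\beta$ estimates in (v) use the ``additional'' parts of Lemma \ref{lem:H small r} (triggered by $\sigma+\alpha\in\mathbb{N}$, by $\alpha=1,\sigma=0$, or by $\gamma-\beta=0\in\mathbb{Z}_{+}$, respectively), where $\mathbb{H}_{\sigma,\gamma}$ gains an extra power of $r$ and the threshold shifts from $\beta-\frac{d}{2}$ to $2\beta-\frac{d}{2}$; substituting $r^{2}\apprle|x|^{4\beta}t^{-2\alpha}$ then produces the $t^{-\sigma-2\alpha}$ and $|x|^{-d-2\gamma+4\beta}$ factors. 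In the logarithmic threshold cases I replace $|\ln r|$ by $|\ln(|x|^{2\beta}t^{-\alpha})|$, which is legitimate because the two differ by the additive constant $2\beta\ln 2$ and $1+|\ln r|\apprle1+|\ln(|x|^{2\beta}t^{-\alpha})|$ for $r\leq 2^{-2\beta}$.

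The only genuine subtlety---and the step I would be most careful about---is the passage from the pointwise asymptotics ``$\mathbb{H}_{\sigma,\gamma}(r)\sim g(r)$ as $r\to 0$'' and ``as $r\to\infty$'' stated in the lemmas to the uniform inequalities ``$|\mathbb{H}_{\sigma,\gamma}(r)|\apprle g(r)$ for all $r\leq c$'' (resp. $r\geq c$) that the theorem requires. This is handled by noting that $\mathbb{H}_{\sigma,\gamma}$ is continuous and bounded on $(0,\infty)$, that each comparison function $g$ (namely $r^{(d+2\gamma)/(2\beta)}$, $r$, $r^{2}$, $r(1+|\ln r|)$, $r^{-1}$, or the exponential) is bounded below by a positive constant on the complementary compact interval, and that the asymptotic relation holds on a full neighborhood of $0$ (resp. of $\infty$) with the explicit $\varepsilon$ from the paper's convention. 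Patching the near-threshold neighborhood with the compact middle interval upgrades each $\sim$ to the required one-sided $\apprle$ over the entire half-line, which completes the proof.
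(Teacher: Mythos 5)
Your proposal is correct and follows exactly the paper's route: the paper deduces this theorem directly from the factorization $p_{\sigma,\gamma}(t,x)=\frac{2^{2\gamma}}{\pi^{d/2}}|x|^{-d-2\gamma}t^{-\sigma}\mathbb{H}_{\sigma,\gamma}\bigl(2^{-2\beta}|x|^{2\beta}t^{-\alpha}\bigr)$ together with Lemmas \ref{lem:H large r} and \ref{lem:H small r} and the boundedness of $\mathbb{H}_{\sigma,\gamma}$ on $(0,\infty)$, which is precisely your argument. The substitution bookkeeping and the compact-interval patching (upgrading the lemmas' statements near $0$ and $\infty$ to uniform bounds on $r\leq 2^{-2\beta}$ and $r\geq 2^{-2\beta}$) that you spell out are exactly the routine steps the paper leaves implicit in calling the theorem a corollary of those lemmas.
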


\begin{rem}
									\label{rem: integrable}
(i) By Theorem \ref{thm: derivative estimate},
$p_{\sigma,\gamma}(t,\cdot)\in L_{1}(\mathbb{R}^{d})$ for all $\gamma\in[0,\beta]$
and $\sigma\in\mathbb{R}$. Furthermore, if $\alpha=1$ and $\sigma=0$, then $p_{0,\gamma}(t,\cdot)\in L_{1}(\mathbb{R}^{d})$ for all $\gamma\in[0,\infty)$.

(ii) Observe that
\begin{align}
					\label{eq:scaling}
p_{\sigma,\gamma}(t,x)=t^{-\sigma-\frac{\alpha(d+2\gamma)}{2\beta}}p_{\sigma,\gamma}(1,t^{-\frac{\alpha}{2\beta}}x)
\end{align}
which implies
$$
\int_{0}^{T}\int_{\mathbb{R}^{d}}\left|p_{\sigma,\gamma}(t,x)\right|dxdt=\int_{0}^{T}t^{-\sigma-\frac{\alpha\gamma}{\beta}}\left(\int_{\mathbb{R}^{d}}\left|p_{\sigma,\gamma}(1,x)\right|dx\right)dt<\infty
$$
if $\sigma+\frac{\alpha\gamma}{\beta}<1$. Thus, under this condition, one
can consider Riemann-Liouville fractional integral and the Fourier-Laplace
transform of $p_{\sigma,\gamma}(t,x)$. However we do not use such transforms in this article.
\end{rem}

  The following
theorem handles the interchangeability of  $\Delta^{\gamma}$ and
$\mathbb{D}_{t}^{\sigma}$.

\begin{thm}
							\label{thm:time derivative}
Let $\gamma\in[0,\infty)$. For any $\sigma\in\mathbb{R}$,
$m\in\mathbb{N}$, and $\eta\in(-\infty,1)$,
$$
D_{t}^{m}p_{\sigma,\gamma}(t,x)=p_{\sigma+m,\gamma}(t,x),
$$
and
$$
\mathbb{D}_{t}^{\sigma}p_{\eta,\gamma}(t,x)=p_{\sigma+\eta,\gamma}(t,x).
$$

\end{thm}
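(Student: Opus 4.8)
The plan is to reduce both identities to the Mellin--Barnes representation \eqref{eq:kernel representation} and to the fact that the contour $L=L_{Br}$ is selected in terms of $\gamma$ (and $d,\alpha,\beta$) alone, so that a single contour serves every shifted index simultaneously. Writing $A=|x|^{2\beta}2^{-2\beta}$, $r=r(t)=At^{-\alpha}$ and $C=2^{2\gamma}\pi^{-d/2}|x|^{-d-2\gamma}$, the definition of $p_{\sigma,\gamma}$ together with \eqref{eq:kernel representation} gives
$$p_{\sigma,\gamma}(t,x)=\frac{C}{2\pi\textnormal{i}}\int_{L}\mathcal{H}_{\sigma,\gamma}(z)\,A^{-z}\,t^{\alpha z-\sigma}\,dz,$$
exhibiting $p_{\sigma,\gamma}(\cdot,x)$ as a superposition of the pure powers $t^{\alpha z-\sigma}$. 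Everything then rests on the single gamma relation $\Gamma(1+a)=a\Gamma(a)$, used in the form $(\alpha z-\sigma)\Gamma(\alpha z-\sigma)=\Gamma(1-\sigma+\alpha z)$.

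First I would prove $D_t^m p_{\sigma,\gamma}=p_{\sigma+m,\gamma}$. By \eqref{eqn 4.15} the operator $D_t^m$ is the classical $m$-th derivative, so it suffices to treat $m=1$ and iterate. Differentiating the display in $t$ under the integral sign---justified because, by \eqref{eq:Stirling approx vertical} with $\alpha^{*}=2-\alpha>0$, the integrand and each of its $t$-derivatives decay along $L$ like $|\Im z|^{N}\exp(-(2-\alpha)|\Im z|\pi/2)$, which absorbs the polynomial factor $\alpha z-\sigma$---brings down $\alpha z-\sigma$ inside the integral. Since $(\alpha z-\sigma)\mathcal{H}_{\sigma,\gamma}(z)=\mathcal{H}_{\sigma+1,\gamma}(z)$ by the gamma relation applied to the denominator $\Gamma(1-\sigma+\alpha z)$, this yields $\frac{d}{dt}p_{\sigma,\gamma}(t,x)=Ct^{-\sigma-1}\mathbb{H}_{\sigma+1,\gamma}(r)=p_{\sigma+1,\gamma}(t,x)$, and induction settles this identity for every $\sigma\in\mathbb{R}$.

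For the second identity I would first handle the fractional--integral case $\sigma<0$, where $\mathbb{D}_t^{\sigma}=I_t^{|\sigma|}$. Substituting the Mellin--Barnes formula for $p_{\eta,\gamma}(s,x)$ into $I_t^{|\sigma|}$ and applying Fubini reduces the inner integral to $I_t^{|\sigma|}t^{\alpha z-\eta}=\frac{\Gamma(\alpha z-\eta+1)}{\Gamma(\alpha z-\eta+1-\sigma)}t^{\alpha z-\eta-\sigma}$, the power rule extended from real $a\ge0$ to complex $a$ with $\Re a>-1$ by the same beta-function evaluation. Cancelling the factor $\Gamma(1-\eta+\alpha z)$ in the denominator of $\mathcal{H}_{\eta,\gamma}$ against the numerator produced by the power rule turns the integrand into $\mathcal{H}_{\eta+\sigma,\gamma}(z)A^{-z}t^{\alpha z-(\eta+\sigma)}$, i.e. $\mathbb{D}_t^{\sigma}p_{\eta,\gamma}=p_{\eta+\sigma,\gamma}$. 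The derivative case $\sigma>0$ then follows by factoring $\mathbb{D}_t^{\sigma}=D_t^{n}I_t^{n-\sigma}$ with $n=\lfloor\sigma\rfloor+1$: the inner fractional integral produces $p_{\eta-(n-\sigma),\gamma}$ by the case just proved (here $\eta<1$ is what makes this step legitimate), and applying $D_t^{n}$ gives $p_{\eta+\sigma,\gamma}$ by the integer identity.

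The main obstacle is the justification of these interchanges, and the Fubini step is exactly where $\eta<1$ is used. On $L$ one has $|t^{\alpha z}|=t^{\alpha\ell_0}$, so absolute convergence of the double integral amounts to $\int_0^t(t-s)^{|\sigma|-1}s^{\alpha\ell_0-\eta}\,ds<\infty$, i.e. $\alpha\ell_0-\eta>-1$, combined with the already established convergence of $\int_L|\mathcal{H}_{\eta,\gamma}(z)|\,|dz|$. Because for $\gamma\neq0$ the admissible range of $\ell_0$ has right endpoint $0$, I can push $\ell_0$ close enough to that endpoint to secure $\alpha\ell_0-\eta>-1$ precisely when $-\eta>-1$, that is, when $\eta<1$; the same hypothesis, via the scaling relation \eqref{eq:scaling} and the boundedness of $\mathbb{H}_{\eta,\gamma}$ from Lemma \ref{lem:H large r}, guarantees $p_{\eta,\gamma}(\cdot,x)\in L_1$ near $t=0$, so that $I_t^{|\sigma|}$ is defined at all. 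I expect these convergence verifications and the complex--exponent form of the power rule to be the only genuine technicalities, the gamma-factor algebra being routine.
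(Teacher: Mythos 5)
Your proposal is correct and follows essentially the same route as the paper's own proof: the gamma identity $(\alpha z-\sigma)\mathcal{H}_{\sigma,\gamma}(z)=\mathcal{H}_{\sigma+1,\gamma}(z)$ (equivalently the paper's $\alpha z\,\mathcal{H}_{\sigma,\gamma}(z)=\mathcal{H}_{\sigma+1,\gamma}(z)+\sigma\mathcal{H}_{\sigma,\gamma}(z)$) for the integer case, then Fubini plus the beta-integral power rule on a contour with $\ell_{0}>-\frac{1-\eta}{\alpha}$ for $I_{t}^{|\sigma|}$ (using \eqref{eq:Stirling approx vertical} for absolute convergence, exactly where $\eta<1$ enters), and finally the factorization $D_{t}^{\sigma}=D_{t}^{n}I_{t}^{n-\sigma}$ from \eqref{eqn 4.15}. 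The only cosmetic difference is that you differentiate $t^{\alpha z-\sigma}$ directly under the Mellin--Barnes integral rather than invoking Proposition \ref{prop:derivatives of H(r)} together with the product rule on $t^{-\sigma}\mathbb{H}_{\sigma,\gamma}$, which amounts to the same computation.
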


\begin{proof}
First we show
\begin{align}
					\label{eq:derivative relation}
\frac{\partial}{\partial t}p_{\sigma,\gamma}(t,x)=p_{\sigma+1,\gamma}(t,x)
\end{align}
for any $\sigma\in\mathbb{R}$. By (\ref{eq:zGamma(z)=Gamma(z+1)}),
$$
\frac{\alpha z}{\Gamma(1-\sigma+\alpha z)}=\frac{1}{\Gamma(-\sigma+\alpha z)}+\frac{\sigma}{\Gamma(1-\sigma+\alpha z)}
$$
which implies
$$
\mathcal{H}_{\sigma,\gamma}(z)\alpha z=\mathcal{H}_{\sigma+1,\gamma}(z)+\sigma\mathcal{H}_{\sigma,\gamma}(z).
$$
Then by Proposition \ref{prop:derivatives of H(r)},
\begin{align*}
\frac{\partial}{\partial t}\mathbb{H}_{\sigma,\gamma}\left(\frac{|x|^{2\beta}}{2^{2\beta}}t^{-\alpha}\right) & =\frac{t^{-1}}{2\pi\textnormal{i}}\int_{L}\mathcal{H}_{\sigma,\gamma}(z)\alpha z\left(\frac{|x|^{2\beta}}{2^{2\beta}}t^{-\alpha}\right)^{-z}dz\\
 & =\frac{t^{-1}}{2\pi\textnormal{i}}\int_{L}\left(\mathcal{H}_{1+\sigma,\gamma}(z)+\sigma\mathcal{H}_{\sigma,\gamma}(z)\right)\left(\frac{|x|^{2\beta}}{2^{2\beta}}t^{-\alpha}\right)^{-z}dz\\
 & =t^{-1}\left[\mathbb{H}_{\sigma+1,\gamma}\left(\frac{|x|^{2\beta}}{2^{2\beta}}t^{-\alpha}\right)+\sigma\mathbb{H}_{\sigma,\gamma}\left(\frac{|x|^{2\beta}}{2^{2\beta}}t^{-\alpha}\right)\right].
\end{align*}
Therefore,
\begin{align*}
\frac{\partial}{\partial t}p_{\sigma,\gamma}(t,x) & =\frac{2^{2\gamma}}{\pi^{d/2}}|x|^{-d-2\gamma}t^{-\sigma}\left(\frac{\partial}{\partial t}\mathbb{H}_{\sigma,\gamma}\left(\frac{|x|^{2\beta}}{2^{2\beta}}t^{-\alpha}\right)-\sigma t^{-1}\mathbb{H}_{\sigma,\gamma}\left(\frac{|x|^{2\beta}}{2^{2\beta}}t^{-\alpha}\right)\right)\\
 & =\frac{2^{2\gamma}}{\pi^{d/2}}|x|^{-d-2\gamma}t^{-\sigma-1}\mathbb{H}_{\sigma+1,\gamma}\left(\frac{|x|^{2\beta}}{2^{2\beta}}t^{-\alpha}\right)\\
 & =p_{\sigma+1,\gamma}(t,x),
\end{align*}
and  (\ref{eq:derivative relation}) is proved. Thus to complete the proof of the theorem, due to \eqref{eqn 4.15} and (\ref{eq:derivative relation}),
it is sufficient to show that for $\sigma<0$ and $\eta<1$
$$
I_{t}^{|\sigma|}p_{\eta,\gamma}(t,x)=p_{\eta+\sigma,\gamma}(t,x).
$$
Take $\ell_{0}>-\frac{1-\eta}{\alpha}$ satisfying \eqref{eq:ell condition}.
For $\Re[z]>-\frac{1-\eta}{\alpha}$, it holds that
$$
\frac{1}{\Gamma(-\sigma)}\int_{0}^{t}(t-s)^{-\sigma-1}s^{-\eta+\alpha z}ds=\frac{\Gamma(1-\eta+\alpha z)}{\Gamma(1-\sigma-\eta+\alpha z)}t^{-\sigma-\eta+\alpha z}.
$$
Observe that
$$
\mathcal{H}_{\eta,\gamma}(z)\frac{\Gamma(1-\eta+\alpha z)}{\Gamma(1-\sigma-\eta+\alpha z)}=\mathcal{H}_{\eta+\sigma,\gamma}(z).
$$
By (\ref{eq:Stirling approx vertical}) and  the Fubini theorem,
\begin{align*}
I_{t}^{|\sigma|}p_{\eta,\gamma}(t,x) & =\int_{0}^{t}\frac{(t-s)^{-\sigma-1}}{\Gamma(-\sigma)}p_{\eta,\gamma}(s,x)ds\\
 & =\frac{2^{2\gamma}\pi^{-\frac{d}{2}}|x|^{-d-2\gamma}}{2\pi\textnormal{i}}\int_{0}^{t}\int_{L}\frac{(t-s)^{-\sigma-1}}{\Gamma(-\sigma)}\mathcal{H}_{\eta,\gamma}(z)s^{-\eta}\left(\frac{|x|^{2\beta}s^{-\alpha}}{2^{2\beta}}\right)^{-z}dzds\\
 & =\frac{2^{2\gamma}\pi^{-\frac{d}{2}}|x|^{-d-2\gamma}}{2\pi\textnormal{i}}\int_{L}\mathcal{H}_{\eta,\gamma}(z)\left(\frac{|x|^{2\beta}}{2^{2\beta}}\right)^{-z}\left[\int_{0}^{t}\frac{(t-s)^{-\sigma-1}}{\Gamma(-\sigma)}s^{-\eta+\alpha z}ds\right]dz\\
 & =\frac{2^{2\gamma}\pi^{-\frac{d}{2}}|x|^{-d-2\gamma}}{2\pi\textnormal{i}}t^{-\sigma}\int_{L}\mathcal{H}_{\eta+\sigma,\gamma}(z)\left(\frac{|x|^{2\beta}t^{-\alpha}}{2^{2\beta}}\right)^{-z}dz\\
 & =2^{2\gamma}\pi^{-\frac{d}{2}}|x|^{-d-2\gamma}t^{-\eta-\sigma}\mathbb{H}_{\eta+\sigma,\gamma}\left(\frac{|x|^{2\beta}}{2^{2\beta}}t^{-\alpha}\right)\\
 & =p_{\eta+\sigma,\gamma}(t,x).
\end{align*}
The theorem is proved.
\end{proof}

\begin{rem}
							\label{rem:diff Fourier}
Let $0<\varepsilon<T$. Then by Theorems \ref{thm: derivative estimate} and \ref{thm:time derivative}, $p(t,x)$ and $\frac{\partial p}{\partial t}(t,x)$
are integrable in $x\in\mathbb{R}^{d}$ uniformly in $t\in[\varepsilon,T]$.
Thus
$$
\frac{\partial}{\partial t}\int_{\mathbb{R}^{d}}e^{-\textnormal{i}(x,\xi)}p(t,x)dx=\int_{\mathbb{R}^{d}}e^{-\textnormal{i}(x,\xi)}\frac{\partial p}{\partial t}(t,x)dx.
$$

\end{rem}

To estimate the classical derivatives of $p(t,x)$, we need the following
theorem.
\begin{thm}
								\label{thm: usual derivative estimate 2}
Let $\alpha,\beta,\gamma,\sigma$ be given as in Theorem \ref{thm: derivative estimate} and $n\in\mathbb{N}$. Then for $|x|^{2\beta}t^{-\alpha}\geq1$
$$
\left|D_{x}^{n}p_{\sigma,\gamma}(t,x)\right|\apprle\begin{cases}
|x|^{-d-n}t^{-\sigma}\exp\left\{-(|x|^{2\beta}t^{-\alpha})^{\frac{1}{2\beta-\alpha}}\right\}& : \beta\in\mathbb{N},\gamma=0\\
|x|^{-d-2\gamma-2\beta-n}t^{-\sigma+\alpha} & :\gamma\in\mathbb{Z}_{+}\ \mbox{or}\ \sigma\in\mathbb{N}\\
|x|^{-d-2\gamma-n}t^{-\sigma} & :\mbox{otherwise,}
\end{cases}
$$
and for $|x|^{2\beta}t^{-\alpha}\leq1$
$$
|D_{x}^{n}p_{\sigma,\gamma}(t,x)|\apprle\begin{cases}
|x|^{2-n}t^{-\sigma-\frac{\alpha(d+2\gamma+2)}{2\beta}} & :\gamma<\beta-\frac{d}{2}-1\\
|x|^{2-n}t^{-\sigma-\alpha}(1+|\ln|x|^{2\beta}t^{-\alpha}|) & :\gamma=\beta-\frac{d}{2}-1\\
|x|^{-d-2\gamma+2\beta-n}t^{-\sigma-\alpha} & :\gamma>\beta-\frac{d}{2}-1.
\end{cases}
$$
Additionally, for $|x|^{2\beta}t^{-\alpha}\leq1$, the followings hold:

(i) If $\alpha=1$ and $\sigma=0$,
$$
\left|D_{x}^{n}p_{0,\gamma}(t,x)\right| \apprle |x|^{2-n} t^{-\frac{d+2\gamma+2}{2\beta}}.
$$

(ii) If $\sigma+\alpha\in\mathbb{N}$,
$$
\left|D_{x}^{n}p_{\sigma,\gamma}(t,x)\right|\apprle\begin{cases}
|x|^{2-n}t^{-\sigma-\frac{\alpha(d+2\gamma+2)}{2\beta}}&:\gamma<2\beta-\frac{d}{2}-1\\
|x|^{2-n}t^{-\sigma-2\alpha}(1+\left|\ln|x|^{2\beta}t^{-\alpha}\right|) & :\gamma=2\beta-\frac{d}{2}-1\\
|x|^{-d-2\gamma+4\beta-n}t^{-\sigma-2\alpha} & :\gamma>2\beta-\frac{d}{2}-1.
\end{cases}
$$

(iii) If $\gamma=\beta$,
$$
\left|D_{x}^{n}p_{\sigma,\beta}(t,x)\right|\apprle\begin{cases}
|x|^{2-n}t^{-\sigma-\alpha-\frac{\alpha(d+2)}{2\beta}}& :\frac{d}{2}+1<\beta\\
|x|^{2-n}t^{-\sigma-2\alpha}(1+\left|\ln|x|^{2\beta}t^{-\alpha}\right|) & :\frac{d}{2}+1=\beta\\
|x|^{-d+2\beta-n}t^{-\sigma-2\alpha} & :\frac{d}{2}+1>\beta.
\end{cases}
$$

\end{thm}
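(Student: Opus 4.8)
The plan is to differentiate the Mellin--Barnes representation \eqref{eq:kernel representation} of $p_{\sigma,\gamma}$ directly and to recognize each spatial derivative as a finite combination of the functions $\mathbb{H}^{(j)}_{\sigma,\gamma}$ whose asymptotics are recorded in Lemmas \ref{lem:H^q large} and \ref{lem:H^q small}. Writing $\mu(z):=-d-2\gamma-2\beta z$, the definition of $p_{\sigma,\gamma}$ gives
$$
p_{\sigma,\gamma}(t,x)=\frac{2^{2\gamma}}{\pi^{d/2}}\,t^{-\sigma}\,\frac{1}{2\pi\textnormal{i}}\int_{L}\mathcal{H}_{\sigma,\gamma}(z)\big(2^{-2\beta}t^{-\alpha}\big)^{-z}|x|^{\mu(z)}\,dz,\qquad r:=2^{-2\beta}|x|^{2\beta}t^{-\alpha}.
$$
Since $\alpha^{*}=2-\alpha>0$, estimate \eqref{eq:Stirling approx vertical} shows that on the vertical contour the integrand, together with all its $x$--derivatives, decays like a power of $|\Im[z]|$ times $e^{-(2-\alpha)\pi|\Im[z]|/2}$, so one may differentiate under the integral sign. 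For a multi--index $\mathfrak{a}$ with $|\mathfrak{a}|=n$ one has the radial identity $D^{\mathfrak{a}}_{x}|x|^{\mu}=\sum_{\mathfrak{b}}x^{\mathfrak{b}}|x|^{\mu-n-|\mathfrak{b}|}\pi_{\mathfrak{b}}(\mu)$ with finitely many $\mathfrak{b}$ and polynomials $\pi_{\mathfrak{b}}$ of degree $\le n$. Because $\mu(z)$ is affine in $z$, $\pi_{\mathfrak{b}}(\mu(z))$ is a polynomial in $z$, so by \eqref{eq:cH^q} the product $\mathcal{H}_{\sigma,\gamma}(z)\pi_{\mathfrak{b}}(\mu(z))$ is a finite linear combination $\sum_{j=0}^{n}a_{\mathfrak{b},j}\mathcal{H}^{(j)}_{\sigma,\gamma}(z)$. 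Inserting this yields the key representation
$$
D^{\mathfrak{a}}_{x}p_{\sigma,\gamma}(t,x)=\frac{2^{2\gamma}}{\pi^{d/2}}\,t^{-\sigma}\sum_{\mathfrak{b}}x^{\mathfrak{b}}|x|^{-d-2\gamma-n-|\mathfrak{b}|}\,G_{\mathfrak{b}}(r),\qquad G_{\mathfrak{b}}(r):=\sum_{j=0}^{n}a_{\mathfrak{b},j}\,\mathbb{H}^{(j)}_{\sigma,\gamma}(r).
$$

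For $\mathbf{M}\ge1$ (so $r\ge1$) the bound is obtained by substituting the large--$r$ asymptotics and using $|x^{\mathfrak{b}}|\le|x|^{|\mathfrak{b}|}$, which turns every summand into a multiple of $|x|^{-d-2\gamma-n}|\mathbb{H}^{(j)}_{\sigma,\gamma}(r)|$. If $\beta\in\mathbb{N}$ and $\gamma=0$, Lemmas \ref{lem:H large r} and \ref{lem:H^q large} give exponential decay of every $\mathbb{H}^{(j)}_{\sigma,0}$, yielding the first line. If $\gamma\in\mathbb{Z}_{+}$ or $\sigma\in\mathbb{N}$, the constant term of $\mathbb{H}^{(0)}_{\sigma,\gamma}$ vanishes by Lemma \ref{lem:H large r}, so all summands are $O(r^{-1})$ and, since $r^{-1}\sim|x|^{-2\beta}t^{\alpha}$, the bound $|x|^{-d-2\gamma-2\beta-n}t^{-\sigma+\alpha}$ follows; otherwise the $j=0$ summand contributes the nonzero constant of Lemma \ref{lem:H large r} and dominates, giving $|x|^{-d-2\gamma-n}t^{-\sigma}$.

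For $\mathbf{M}\le1$ (so $r\le1$) the decisive point is that the leading term of each $G_{\mathfrak{b}}$ cancels. By Lemma \ref{lem:H^q small} every $\mathbb{H}^{(j)}_{\sigma,\gamma}(r)$ has leading coefficient proportional to $\left(-\tfrac{d+2\gamma}{2\beta}\right)^{j}=\mathfrak{d}_{1,0}^{\,j}$, the value of $z^{j}$ at the pole $z=\mathfrak{d}_{1,0}=-\tfrac{d+2\gamma}{2\beta}$, where $\mu(\mathfrak{d}_{1,0})=0$. Hence the leading coefficient of $G_{\mathfrak{b}}$ is proportional to $\sum_{j}a_{\mathfrak{b},j}\mathfrak{d}_{1,0}^{\,j}=\pi_{\mathfrak{b}}(0)$. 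Now $D^{\mathfrak{a}}_{x}1=0$ for $n\ge1$ forces $\mu$ to be a common factor of $D^{\mathfrak{a}}_{x}|x|^{\mu}$, i.e. $\pi_{\mathfrak{b}}(0)=0$ for every $\mathfrak{b}$; thus the $r^{(d+2\gamma)/(2\beta)}$--term (and its logarithmic companion in the borderline cases) drops out of each $G_{\mathfrak{b}}$, leaving only the lumped remainder of Lemma \ref{lem:H^q small}. Substituting that remainder and bounding $|x^{\mathfrak{b}}|\le|x|^{|\mathfrak{b}|}$ termwise gives $|D^{\mathfrak{a}}_{x}p_{\sigma,\gamma}|\apprle t^{-\sigma}|x|^{-d-2\gamma-n}\max_{\mathfrak{b}}|G_{\mathfrak{b}}(r)|$, which reproduces $|x|^{2-n}t^{-\sigma-\alpha(d+2\gamma+2)/(2\beta)}$ when $\gamma<\beta-\tfrac{d}{2}-1$, the logarithmic case at $\gamma=\beta-\tfrac{d}{2}-1$, and $|x|^{-d-2\gamma+2\beta-n}t^{-\sigma-\alpha}$ when $\gamma>\beta-\tfrac{d}{2}-1$. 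The refinements (i)--(iii) follow identically from the additional parts of Lemma \ref{lem:H^q small}: for $\alpha=1,\sigma=0$ the poles $\mathfrak{d}_{2,k}$ are absent, while for $\sigma+\alpha\in\mathbb{N}$ (in particular for $\gamma=\beta$) the residues $\kappa_{2},\hat{\kappa}_{2}$ vanish, pushing the threshold to $2\beta-\tfrac{d}{2}-1$ and improving $t^{-\sigma-\alpha}$ to $t^{-\sigma-2\alpha}$.

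The main obstacle is exactly this small--$\mathbf{M}$ cancellation. A direct termwise estimate based only on $|x^{\mathfrak{b}}|\le|x|^{|\mathfrak{b}|}$ would produce the far weaker bound $|x|^{-n}t^{-\sigma-\alpha(d+2\gamma)/(2\beta)}$; one genuinely needs that the dominant $r^{(d+2\gamma)/(2\beta)}$ contributions---nonzero in each individual $\mathbb{H}^{(j)}_{\sigma,\gamma}$---assemble through the coefficients $a_{\mathfrak{b},j}$ into $\pi_{\mathfrak{b}}(0)=0$. Arranging the derivative coefficients so that this cancellation is transparent, and identifying the surviving order of $G_{\mathfrak{b}}$ with the lumped remainder of Lemma \ref{lem:H^q small} (including the logarithmic borderlines), is the technical heart of the argument; once it is in place, the large--$\mathbf{M}$ estimates and the special parameter values reduce to direct substitution into Lemmas \ref{lem:H^q large} and \ref{lem:H^q small}.
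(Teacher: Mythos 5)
Your proposal is correct and is essentially the paper's own proof: the paper likewise expresses $D^{n}_{x}p_{\sigma,\gamma}$ through the functions $\mathbb{H}^{(q)}_{\sigma,\gamma}$ (its inductive bound \eqref{eq:lemma-derivative 1} uses the blocks $(d+2\gamma)\mathbb{H}^{(q-1)}_{\sigma,\gamma}+2\beta\mathbb{H}^{(q)}_{\sigma,\gamma}$, whose Mellin symbol is $-z^{q-1}\mu(z)$ in your notation), obtains the large-$\mathbf{M}$ bounds from Lemmas \ref{lem:H large r} and \ref{lem:H^q large}, and for $\mathbf{M}\leq 1$ uses precisely your cancellation in the concrete form $(d+2\gamma)\bigl(-\tfrac{d+2\gamma}{2\beta}\bigr)^{q-1}+2\beta\bigl(-\tfrac{d+2\gamma}{2\beta}\bigr)^{q}=0$, i.e. $\mu(\mathfrak{d}_{1,0})=0$, before invoking Lemma \ref{lem:H^q small}. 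One small correction: your parenthetical ``(in particular for $\gamma=\beta$)'' is backwards---$\gamma=\beta$ does not imply $\sigma+\alpha\in\mathbb{N}$; rather $\kappa_{2}=\hat{\kappa}_{2}=0$ in that case because $z=-1$ is a removable singularity of $\mathcal{H}_{\sigma,\beta}$ (Remark \ref{rem:removable singularity}), which is exactly why Lemma \ref{lem:H^q small} lists ``$\gamma=\beta$ or $\sigma+\alpha\in\mathbb{N}$'' as separate hypotheses, so your case (iii) is still covered by the lemma you cite.
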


\begin{proof}
Write $R=R(t,x):=2^{-2\beta}|x|^{2\beta}t^{-\alpha}$ and recall $\mathbb{H}_{\sigma,\gamma}^{(q)}$ from (\ref{eq:H^q}).
By Proposition \ref{prop:derivatives of H(r)} and (\ref{eq:zGamma(z)=Gamma(z+1)}),
\begin{align*}
D_{x^{i}} & \mathbb{H}_{\sigma,\gamma}^{(q)}\left(R\right)=-2\beta\frac{x^{i}}{|x|^{2}}\mathbb{H}_{\sigma,\gamma}^{(q+1)}\left(R\right)
\end{align*}
for each $q\in\mathbb{Z}_{+}$ and $i=1,\ldots,d$. Hence
\begin{align*}
|D_{x^{i}} & p_{\sigma,\gamma}(t,x)|\\
 & =\Bigg|-\frac{2^{2\gamma}}{\pi^{d/2}}x^{i}|x|^{-d-2\gamma-2}t^{-\sigma}\Bigg((d+2\gamma)\mathbb{H}_{\sigma,\gamma}\left(R\right)+2\beta\mathbb{H}_{\sigma,\gamma}^{(1)}\left(R\right)\Bigg)\Bigg|\\
 & \leq C|x|^{-d-2\gamma-1}t^{-\sigma}\left|(d+2\gamma)\mathbb{H}_{\sigma,\gamma}\left(R\right)+2\beta\mathbb{H}_{\sigma,\gamma}^{(1)}\left(R\right)\right|
\end{align*}
and
\begin{align*}
 & |D_{x^{j}}D_{x^{i}}p_{\sigma,\gamma}(t,x)|\\
 & =\left|D_{x^{j}}\left\{ -\frac{2^{2\gamma}}{\pi^{d/2}}x^{i}|x|^{-d-2\gamma-2}t^{-\sigma}\Bigg((d+2\gamma)\mathbb{H}_{\sigma,\gamma}\left(R\right)+2\beta\mathbb{H}_{\sigma,\gamma}^{(1)}\left(R\right)\right) \right|\\
 & =\frac{2^{2\gamma}}{\pi^{d/2}}t^{-\sigma}\Bigg|-\delta_{ij}|x|^{-d-2\gamma-2}\left\{ (d+2\gamma)\mathbb{H}_{\sigma,\gamma}(R)+2\beta\mathbb{H}_{\sigma,\gamma}^{(1)}(R)\right\} \\
 & \qquad\qquad\qquad\qquad+(d+2\gamma+2)x^{i}x^{j}|x|^{-d-2\gamma-4}\left\{ (d+2\gamma)\mathbb{H}_{\sigma,\gamma}(R)+2\beta\mathbb{H}_{\sigma,\gamma}^{(1)}(R)\right\} \\
 & \qquad\qquad\qquad\qquad\qquad+2\beta x^{i}x^{j}|x|^{-d-2\gamma-4}\left\{ (d+2\gamma)\mathbb{H}_{\sigma,\gamma}^{(1)}(R)+2\beta\mathbb{H}_{\sigma,\gamma}^{(2)}(R)\right\} \Bigg|\\
 & \leq C|x|^{-d-2\gamma-2}t^{-\sigma}\sum_{q=1}^{2}\left|(d+2\gamma)\mathbb{H}_{\sigma,\gamma}^{(q-1)}(R)+2\beta\mathbb{H}_{\sigma,\gamma}^{(q)}(R)\right|.
\end{align*}
Inductively, for any $n\in\mathbb{N}$,
\begin{align}
					\label{eq:lemma-derivative 1}
\left|D_{x}^{n}p_{\sigma,\gamma}(t,x)\right|\leq C|x|^{-d-2\gamma-n}t^{-\sigma}\sum_{q=1}^{n}\left|(d+2\gamma)\mathbb{H}_{\sigma,\gamma}^{(q-1)}(R)+2\beta\mathbb{H}_{\sigma,\gamma}^{(q)}(R)\right|.
\end{align}

By Lemma \ref{lem:H large r}, \ref{lem:H^q large}, and \eqref{eq:lemma-derivative 1},
$$
\left|D_{x}^{n}p_{\sigma,\gamma}(t,x)\right|\apprle\begin{cases}
t^{\frac{\alpha (d+n)}{2\beta}-\sigma}\exp\left\{-c(|x|^{2\beta}t^{-\alpha})^{\frac{1}{2\beta-\alpha}}\right\} & : \gamma=0, \beta\in\mathbb{N}\\
|x|^{-d-2\gamma-2\beta-n}t^{-\sigma+\alpha} & :\gamma\in\mathbb{Z}_{+}\ \mbox{or}\ \sigma\in\mathbb{N}\\
|x|^{-d-2\gamma-n}t^{-\sigma} & :\mbox{otherwise}
\end{cases}
$$
for $|x|^{2\beta}t^{-\alpha}\geq1$. To estimate the upper bounds for $|x|^{2\beta}t^{-\alpha}\leq1$, observe
that
$$
(d+2\gamma)\left(-\frac{d+2\gamma}{2\beta}\right)^{q-1}+2\beta\left(-\frac{d+2\gamma}{2\beta}\right)^{q}=0.
$$
Thus, by Lemma \ref{lem:H^q small} and (\ref{eq:lemma-derivative 1}),
$$
|D_{x}^{n}p_{\sigma,\gamma}(t,x)|\apprle\begin{cases}
|x|^{2-|n|}t^{-\sigma-\frac{\alpha(d+2\gamma+2)}{2\beta}} & :\gamma<\beta-\frac{d}{2}-1\\
|x|^{2-|n|}t^{-\sigma-\alpha}(1+|\ln|x|^{2\beta}t^{-\alpha}|) & :\gamma=\beta-\frac{d}{2}-1\\
|x|^{-d-2\gamma+2\beta-|n|}t^{-\sigma-\alpha} & :\gamma>\beta-\frac{d}{2}-1
\end{cases}
$$
for $R\leq1$.

Additionally, if $\sigma+\alpha\in\mathbb{N}$, by Lemma \ref{lem:H^q small},
$$
\left|D_{x}^{n}p_{\sigma,\gamma}(t,x)\right|\apprle\begin{cases}
|x|^{2-n}t^{-\sigma-\frac{\alpha(d+2\gamma+2)}{2\beta}}&:\gamma<2\beta-\frac{d}{2}-1\\
|x|^{2-|n|}t^{-\sigma-\alpha}(1+\left|\ln|x|^{2\beta}t^{-\alpha}\right|) & :\gamma=2\beta-\frac{d}{2}-1\\
|x|^{-d-2\gamma+4\beta-|n|}t^{-\sigma-2\alpha} & :\gamma>2\beta-\frac{d}{2}-1
\end{cases}
$$
for $R\leq1$. Also, if $\gamma=\beta$,
$$
\left|D_{x}^{n}p_{\sigma,\beta}(t,x)\right|\apprle\begin{cases}
|x|^{2-n}t^{-\sigma-\alpha-\frac{\alpha(d+2)}{2\beta}} &: \frac{d}{2}+1<\beta \\
|x|^{2-|n|}t^{-\sigma-2\alpha}(1+\left|\ln|x|^{2\beta}t^{-\alpha}\right|) & :\frac{d}{2}+1=\beta\\
|x|^{-d+2\beta-|n|}t^{-\sigma-2\alpha} & :\frac{d}{2}+1>\beta
\end{cases}
$$
for $R\leq1$. The theorem is proved.
\end{proof}

\section{\label{sec: proof of main theorem}Proofs of Theorems \ref{thm:main result 1},
\ref{thm:derivative of kernel} and  \ref{cor:main result 1}}

By  Remark \ref{rem: integrable},   $p_{\sigma,\gamma}(t,\cdot)\in L_{1}(\mathbb{R}^{d})$ if $\gamma\in[0,\beta]$
and $\sigma\in\mathbb{R}$, and $p_{0,\gamma}(t,\cdot)\in L_{1}(\mathbb{R}^{d})$ if $\alpha=1$, $\gamma\in [0,\infty)$, and $\sigma=0$. Due to Lemmas \ref{lem:H large r}, \ref{lem:H small r}, Theorems \ref{thm: derivative estimate}, \ref{thm:time derivative}, and \ref{thm: usual derivative estimate 2}, to prove our desired results, it is enough to show
\begin{align}
					\label{eq:goal}
\mathcal{F}\left\{ p_{\sigma,\gamma}(t,\cdot)\right\} =|\xi|^{2\gamma}t^{-\sigma}E_{\alpha,1-\sigma}(-t^{\alpha}|\xi|^{2\beta}),
\end{align}
which is equivalent to
$$
p_{\sigma,\gamma}(t,x)=\Delta^{\gamma}p_{\sigma,0}(t,x)=\Delta^{\gamma}\mathbb{D}_{t}^{\sigma}p(t,x).
$$

We devide the proof into the cases $d\geq2$ and $d=1$.

\subsection*{Case 1: $d\geq2$}

We choose $\ell_{0}\in\mathbb{R}$ in (\ref{eq:kernel representation})
such that
\begin{align}
				\label{ell 0}
\max(-2,-1-\frac{d-1}{4\beta})<\ell_{0}<-1
\end{align}
if $\gamma=\beta$,
\begin{align}
				\label{ell 1}
\max(-1,-\frac{\gamma}{\beta}-\frac{d-1}{4\beta})<\ell_{0}<-\frac{\gamma}{\beta}
\end{align}
if $\gamma\in(0,\beta)$,
\begin{align}
				\label{ell 2}
\max(-1,-\frac{1}{\alpha},-\frac{d-1}{4\beta})<\ell_{0}<0
\end{align}
if $\gamma=0$. If $\alpha=1$, $\gamma\neq 0$, and $\sigma=0$, then we take $\ell_{0}$ such that
\begin{align}
				\label{ell 3}
-\frac{\gamma}{\beta}-\frac{d-1}{4\beta} < \ell_{0} < -\frac{\gamma}{\beta}.
\end{align}
Under the above restriction on $\ell_{0}$, (\ref{eq:kernel representation})
is well-defined and the value of $p_{\sigma,\gamma}(t,x)$ is independent
of the choice of $\ell_{0}$.

Due to the Fourier transform for radial
function (see \cite[Theorem IV.3.3]{stein1971introduction})
\begin{align*}
 & \mathcal{F}\left\{ p_{\sigma,\gamma}(t,\cdot)\right\} (\xi)\\
 & =\frac{2^{\frac{d}{2}+2\gamma}}{|\xi|^{\frac{d}{2}-1}}t^{-\sigma}\int_{0}^{\infty}\rho^{-\frac{d}{2}-2\gamma}\mathbb{H}_{\sigma,\gamma}\left(\frac{\rho^{2\beta}}{2^{2\beta}}t^{-\alpha}\right)J_{\frac{d}{2}-1}(|\xi|\rho)d\rho
\end{align*}
where $J_{\frac{d}{2}-1}$ is the Bessel function of the first kind
of order $\frac{d}{2}-1$, (i.e.) for $r\in[0,\infty)$,
$$
J_{\frac{d}{2}-1}(r)=\sum_{k=0}^{\infty}\frac{(-1)^{k}}{k!\Gamma(k+\frac{d}{2})}\left(\frac{r}{2}\right)^{2k-1+\frac{d}{2}}.
$$
It is well-known if $m>-1$ then
$$
J_{m}(t)=\begin{cases}
O(t^{m}) & :t\rightarrow0+\\
O(t^{-1/2}) & :t\rightarrow\infty.
\end{cases}
$$
 Due to \eqref{ell 0}-\eqref{ell 3},
\begin{align*}
\int_{0}^{\infty}\left|\rho^{-\frac{d}{2}-2\gamma-2\beta\ell_{0}}J_{\frac{d}{2}-1}(|\xi|\rho)\right|d\rho & \leq\int_{0}^{1}\rho^{-2\beta\ell_{0}-2\gamma-1}d\rho+\int_{1}^{\infty}\rho^{-\frac{d}{2}-2\gamma-2\beta\ell_{0}-\frac{1}{2}}d\rho<\infty.
\end{align*}
Recall $\Re[z]=\ell_{0}$ along $L$. By the above and (\ref{eq:Stirling approx vertical}),
$$
\int_{0}^{\infty}\int_{L}\left|\rho^{-\frac{d}{2}}\mathcal{H}_{\sigma,\gamma}(z)\left(\frac{\rho^{2\beta}t^{-\alpha}}{2^{2\beta}}\right)^{-z}J_{\frac{d}{2}-1}(|\xi|\rho)\right||dz|d\rho<\infty.
$$
Therefore, by the Fubini theorem
\begin{align*}
 & \int_{0}^{\infty}\rho^{-\frac{d}{2}-2\gamma}\mathbb{H}_{\sigma,\gamma}\left(\frac{\rho^{2\beta}}{2^{2\beta}}t^{-\alpha}\right)J_{\frac{d}{2}-1}(|\xi|\rho)d\rho\\
 & =\frac{1}{2\pi\textnormal{i}}\int_{0}^{\infty}\rho^{-\frac{d}{2}-2\gamma}\left[\int_{L}\mathcal{H}_{\sigma,\gamma}(z)\left(\frac{\rho^{2\beta}t^{-\alpha}}{2^{2\beta}}\right)^{-z}dz\right]J_{\frac{d}{2}-1}(|\xi|\rho)d\rho\\
 & =\frac{1}{2\pi\textnormal{i}}\int_{L}\left[\int_{0}^{\infty}\rho^{-\frac{d}{2}-2\gamma-2\beta z}J_{\frac{d}{2}-1}(|\xi|\rho)d\rho\right]\mathcal{H}_{\sigma,\gamma}(z)\left(\frac{t^{-\alpha}}{2^{2\beta}}\right)^{-z}dz.
\end{align*}
By using the formula \cite[(11.4.16)]{Bessel},
$$
\int_{0}^{\infty}\rho^{-\frac{d}{2}-2\gamma-2\beta z}J_{\frac{d}{2}-1}(|\xi|\rho)d\rho=2^{-\frac{d}{2}-2\gamma-2\beta z}|\xi|^{\frac{d}{2}+2\gamma+2\beta z-1}\frac{\Gamma(-\gamma-\beta z)}{\Gamma(\frac{d}{2}+\gamma+\beta z)}
$$
we have
\begin{align*}
\mathcal{H}_{\sigma,\gamma}(z)\frac{\Gamma(-\gamma-\beta z)}{\Gamma(\frac{d}{2}+\gamma+\beta z)} & =\frac{\Gamma(z+1)\Gamma(-z)}{\Gamma(1-\sigma+\alpha z)}.
\end{align*}
Hence
\begin{align*}
 & \frac{2^{\frac{d}{2}+2\gamma}}{|\xi|^{\frac{d}{2}-1}}t^{-\sigma}\cdot\frac{1}{2\pi\textnormal{i}}\int_{L}\left[\int_{0}^{\infty}\rho^{-\frac{d}{2}-2\gamma-2\beta z}J_{\frac{d}{2}-1}(|\xi|\rho)d\rho\right]\mathcal{H}_{\sigma,\gamma}(z)\left(\frac{t^{-\alpha}}{2^{2\beta}}\right)^{-z}dz\\
 & =\frac{2^{\frac{d}{2}+2\gamma}}{|\xi|^{\frac{d}{2}-1}}t^{-\sigma}\cdot\frac{2^{-\frac{d}{2}-2\gamma}}{2\pi\textnormal{i}}|\xi|^{\frac{d}{2}+2\gamma-1}\int_{L}\frac{\Gamma(z+1)\Gamma(-z)}{\Gamma(1-\sigma+\alpha z)}\left(|\xi|^{-2\beta}t^{-\alpha}\right)^{-z}dz\\
 & =|\xi|^{2\gamma}t^{-\sigma}\cdot\frac{1}{2\pi\textnormal{i}}\int_{-L}\frac{\Gamma(1-z)\Gamma(z)}{\Gamma(1-\sigma-\alpha z)}\left(|\xi|^{2\beta}t^{\alpha}\right)^{-z}dz\\
 & =|\xi|^{2\gamma}t^{-\sigma}\textnormal{H}_{12}^{11}\left[|\xi|^{2\beta}t^{\alpha}\ \Big|\begin{array}{cc}
(0,1)\\
(0,1) & (\sigma,\alpha)
\end{array}\right]\\
 & =|\xi|^{2\gamma}t^{-\sigma}E_{\alpha,1-\sigma}(-|\xi|^{2\beta}t^{\alpha}).
\end{align*}
Therefore,
$$
\mathcal{F}\left\{ p_{\sigma,\gamma}(t,\cdot)\right\} =|\xi|^{2\gamma}t^{-\sigma}E_{\alpha,1-\sigma}(-|\xi|^{2\beta}t^{\alpha}).
$$

\subsection*{Case 2: $d=1,\gamma\in(0,\beta)$.}

We choose $\ell_{0}$ such that
$$
\max(-1,-\frac{\gamma}{\beta}-\frac{1}{2\beta})<\ell_{0}<0.
$$
Since $p_{\sigma,\gamma}(t,x)$ is an even function,
\begin{align*}
\mathcal{F}\left\{ p_{\sigma,\gamma}(t,\cdot)\right\}  & =\int_{-\infty}^{\infty}e^{-\textnormal{i}\xi x}p_{\sigma,\gamma}(t,x)dx\\
 & =2\int_{0}^{\infty}p_{\sigma,\gamma}(t,x)\cos(\xi x)dx\\
 & =2\left(\int_{0}^{t^{\alpha/2\beta}}p_{\sigma,\gamma}(t,x)\cos(\xi x)dx+\int_{t^{\alpha/2\beta}}^{\infty}p_{\sigma,\gamma}(t,x)\cos(\xi x)dx\right)\\
 & =2t^{-\sigma-\frac{\alpha\gamma}{\beta}}\left(\int_{0}^{1}p_{\sigma,\gamma}(1,x)\cos(t^{\frac{\alpha}{2\beta}}\xi x)dx+\int_{1}^{\infty}p_{\sigma,\gamma}(1,x)\cos(t^{\frac{\alpha}{2\beta}}\xi x)dx\right).
\end{align*}
The last equality holds due to (\ref{eq:scaling}). Set Bromwich contours
$$
L_{<c}:=\left\{ z\in\mathbb{C}:\Re[z]=\ell_{<c}\right\} ,\quad L_{>c}:=\left\{ z\in\mathbb{C}:\Re[z]=\ell_{>c}\right\}
$$
where
$$
\max(-1,-\frac{\gamma}{\beta}-\frac{1}{2\beta})<\ell_{<c}<-\frac{\gamma}{\beta},\quad -\frac{\gamma}{\beta}<\ell_{>c}<0.
$$
By (\ref{eq:Stirling approx vertical}),
\begin{align*}
\int_{0}^{1}\int_{L_{<c}} & \left|2^{2\beta z}\mathcal{H}_{\sigma,\gamma}(z)x^{-2\gamma-2\beta z-1}\right||dz|dx\\
 & +\int_{1}^{\infty}\int_{L_{>c}}\left|2^{2\beta z}\mathcal{H}_{\sigma,\gamma}(z)x^{-2\gamma-2\beta z-1}\right||dz|dx<\infty.
\end{align*}
Therefore, by the Fubini theorem,

\begin{align*}
\int_{0}^{1} & p_{\sigma,\gamma}(1,x)\cos(t^{\frac{\alpha}{2\beta}}\xi x)dx\\
 & =\frac{2^{2\gamma}\pi^{-1/2}}{2\pi\textnormal{i}}\int_{0}^{1}\int_{L_{<c}}2^{2\beta z}\mathcal{H}_{\sigma,\gamma}(z)x^{-2\gamma-2\beta z-1}\cos(t^{\frac{\alpha}{2\beta}}\xi x)dzdx\\
 & =\frac{2^{2\gamma}\pi^{-1/2}}{2\pi\textnormal{i}}\int_{L_{<c}}2^{2\beta z}\mathcal{H}_{\sigma,\gamma}(z)\left[\int_{0}^{1}x^{-2\gamma-2\beta z-1}\cos(t^{\frac{\alpha}{2\beta}}\xi x)dx\right]dz.
\end{align*}
Similarly,
\begin{align*}
\int_{1}^{\infty} & p_{\sigma,\gamma}(1,x)\cos(t^{\frac{\alpha}{2\beta}}\xi x)dx\\
 & =\frac{2^{2\gamma}\pi^{-1/2}}{2\pi\textnormal{i}}\int_{L_{>c}}2^{2\beta z}\mathcal{H}_{\sigma,\gamma}(z)\left[\int_{1}^{\infty}x^{-2\gamma-2\beta z-1}\cos(t^{\frac{\alpha}{2\beta}}\xi x)dx\right]dz.
\end{align*}

For $\eta\in\mathbb{R}$, it holds that (see \cite[(1.8.1.1), (1.8.1.2)]{PAY})
$$
\int_{0}^{1}x^{-2\lambda-1}\cos(\eta x)dx=-\frac{_{1}F_{2}\left(-\lambda;\frac{1}{2},1-\lambda;-\frac{\eta^{2}}{4}\right)}{2\lambda},\quad\Re[\lambda]<0
$$
and
\begin{align*}
\int_{1}^{\infty} & x^{-2\lambda-1}\cos(\eta x)dx=\frac{\Gamma(-2\lambda)\cos\lambda\pi}{|\eta|^{-2\lambda}}+\frac{_{1}F_{2}\left(-\lambda;\frac{1}{2},1-\lambda;-\frac{\eta^{2}}{4}\right)}{2\lambda},\quad\Re[\lambda]>-\frac{1}{2},
\end{align*}
where $_{1}F_{2}\left(-\lambda;\frac{1}{2},1-\lambda;-\frac{\eta^{2}}{4}\right)$
denotes the general hypergeometric function, i.e.
$$
_{1}F_{2}(p;q,r;z)=\frac{\Gamma(q)\Gamma(r)}{\Gamma(p)}\sum_{k=0}^{\infty}\frac{\Gamma(p+k)}{\Gamma(q+k)\Gamma(r+k)}\frac{z^{k}}{k!},\quad p\in\mathbb{C},\ q,r\in\mathbb{C}\setminus\{0,-1,-2,\cdots\}.
$$
Observe that for $\gamma+\beta z\in\mathbb{C}\setminus\mathbb{N}$,
\begin{align*}
_{1}F_{2} & \left(-\gamma-\beta z;\frac{1}{2},1-\gamma-\beta z;-\frac{t^{\frac{\alpha}{\beta}}|\xi|^{2}}{4}\right)\\
 & =\frac{\sqrt{\pi}\Gamma(1-\gamma-\beta z)}{\Gamma(-\gamma-\beta z)}\sum_{k=0}^{\infty}\frac{\Gamma(-\gamma-\beta z+k)}{\Gamma(\frac{1}{2}+k)\Gamma(1-\gamma-\beta z+k)}\frac{\left(-t^{\frac{\alpha}{\beta}}|\xi|^{2}/4\right)^{k}}{k!}\\
 & =-\sum_{k=0}^{\infty}\frac{\sqrt{\pi}(\gamma+\beta z)}{\Gamma(\frac{1}{2}+k)k!(-\gamma-\beta z+k)}\left(-\frac{t^{\frac{\alpha}{\beta}}|\xi|^{2}}{4}\right)^{k}\\
 & =1-\sum_{k=1}^{\infty}\frac{\gamma+\beta z}{(2k-1)!2k(-\gamma-\beta z+k)}\left(-\frac{t^{\frac{\alpha}{\beta}}|\xi|^{2}}{4}\right)^{k}=:F_{t,\xi}(z).
\end{align*}
Since the series in $F_{t,\xi}(z)$ converges absolutely for fixed
$t,\xi$, one can easily see that $F_{t,\xi}(z)$ is holomorphic in
$\{z\in\mathbb{C}:-\frac{2\gamma-1}{2\beta}<\Re[z]<-\frac{2\gamma+1}{2\beta}\}$
by Morera's theorem. So we obtain
$$
\int_{0}^{1}p_{\sigma,\gamma}(1,x)\cos(t^{\frac{\alpha}{2\beta}}\xi x)dx=-\frac{2^{2\gamma}\pi^{-1/2}}{2\pi\textnormal{i}}\int_{L_{<c}}2^{2\beta z}\frac{\mathcal{H}_{\sigma,\gamma}(z)}{2(\gamma+\beta z)}F_{t,\xi}(z)dz,
$$
and
\begin{align*}
 & \int_{1}^{\infty}p_{\sigma,\gamma}(1,x)\cos(t^{\frac{\alpha}{2\beta}}\xi x)dx\\
 & =\frac{2^{2\gamma}\pi^{-1/2}}{2\pi\textnormal{i}}\Bigg(\int_{L_{>c}}2^{2\beta z}\mathcal{H}_{\sigma,\gamma}(z)\Gamma(-2\gamma-2\beta z)\cos\left((\gamma+\beta z)\pi\right)\left|t^{\frac{\alpha}{2\beta}}\xi\right|^{2\gamma+2\beta z}dz\\
 & \qquad\qquad\qquad\qquad\qquad+\int_{L_{>c}}2^{2\beta z}\frac{\mathcal{H}_{\sigma,\gamma}(z)}{2(\gamma+\beta z)}F_{t,\xi}(z)dz\Bigg).
\end{align*}
If $\gamma\neq\beta$, by (\ref{eq:zGamma(z)=Gamma(z+1)}),
$$
\frac{\mathcal{H}_{\sigma,\gamma}(z)}{2(\gamma+\beta z)}=-\frac{\Gamma(\frac{1}{2}+\gamma+\beta z)\Gamma(1+z)\Gamma(-z)}{2\Gamma(1-\gamma-\beta z)\Gamma(1-\sigma+\alpha z)}.
$$
Thus $z=-\frac{\gamma}{\beta}$ is a removable singularity. These
facts lead to
\begin{align}
					\label{eq:residue vanishes}
\textnormal{Res}_{z=-\frac{\gamma}{\beta}}\left[2^{2\beta z}\frac{\mathcal{H}_{\sigma,\gamma}(z)}{2(\gamma+\beta z)}F_{t,\xi}(z)\right]=0
\end{align}
and
$$
\frac{1}{2\pi\textnormal{i}}\int_{L_{>c}}2^{2\beta z}\frac{\mathcal{H}_{\sigma,\gamma}(z)}{2(\gamma+\beta z)}F_{t,\xi}(z)dz=\frac{1}{2\pi\textnormal{i}}\int_{L_{<c}}2^{2\beta z}\frac{\mathcal{H}_{\sigma,\gamma}(z)}{2(\gamma+\beta z)}F_{t,\xi}(z)dz.
$$
Therefore,
\begin{align*}
 & \int_{0}^{\infty}p_{\sigma,\gamma}(1,x)\cos(t^{\frac{\alpha}{2\beta}}\xi x)dx\\
 & =\frac{2^{2\gamma}\pi^{-1/2}t^{\frac{\alpha\gamma}{\beta}}|\xi|^{2\gamma}}{2\pi\textnormal{i}}\int_{L_{>c}}2^{2\beta z}\mathcal{H}_{\sigma,\gamma}(z)\Gamma(-2\gamma-2\beta z)\cos\left((\gamma+\beta z)\pi\right)\left(t^{\alpha}|\xi|^{2\beta}\right)^{z}dz.
\end{align*}
By (\ref{eq:gamma property}),
$$
\mathcal{H}_{\sigma,\gamma}(z)\Gamma(-2\gamma-2\beta z)\cos\left((\gamma+\beta z)\pi\right)=\pi^{1/2}2^{-2\gamma-2\beta z-1}\frac{\Gamma(1+z)\Gamma(-z)}{\Gamma(1-\sigma+\alpha z)}.
$$
Hence
\begin{align*}
\mathcal{F}\left\{ p_{\sigma,\gamma}(t,\cdot)\right\} (\xi) & =2t^{-\sigma-\frac{\alpha\gamma}{\beta}}\int_{0}^{\infty}p_{\sigma,\gamma}(1,x)\cos(t^{\frac{\alpha}{2\beta}}\xi x)dx\\
 & =\frac{|\xi|^{2\gamma}t^{-\sigma}}{2\pi\textnormal{i}}\int_{L}\mathcal{H}_{\sigma,\gamma}(z)\frac{\Gamma(1+z)\Gamma(-z)}{\Gamma(1-\sigma+\alpha z)}\left(t^{\alpha}|\xi|^{2\beta}\right)^{z}dz\\
 & =|\xi|^{2\gamma}t^{-\sigma}E_{\alpha,1-\sigma}\left(-|\xi|^{2\beta}t^{\alpha}\right).
\end{align*}

\subsection*{Case 3: $d=1,\gamma=0$.}

Let
$$
\max(-1,-\frac{1}{\alpha},-\frac{1}{2\beta})<\ell_{0}<1.
$$
Again, we follow the argument in Case 2. Note that
\begin{align*}
\frac{\mathcal{H}_{\sigma,0}(z)}{2\beta z} & =-\frac{\Gamma(\frac{1}{2}+\beta z)\Gamma(1+z)\Gamma(-z)}{2\Gamma(1-\beta z)\Gamma(1-\sigma+\alpha z)}
\end{align*}
has a removable singularity at $z=0$ if and only if $\sigma\in\mathbb{N}$.
Thus (\ref{eq:residue vanishes}) holds if $\sigma=1$ and we immediately
obtain
$$
\mathcal{F}\left\{ p_{1,0}(t,\cdot)\right\} =t^{-1}E_{\alpha,0}(-|\xi|^{2\beta}t^{\alpha})=\frac{\partial}{\partial t}E_{\alpha}(-|\xi|^{2\beta}t^{\alpha}).
$$
By Remark \ref{rem:diff Fourier} and Theorem \ref{thm:time derivative},
$$
\frac{\partial}{\partial t}\mathcal{F}\left\{ p(t,\cdot)\right\} =\mathcal{F}\{\frac{\partial p}{\partial t}(t,\cdot)\}=\mathcal{F}\left\{ p_{1,0}(t,\cdot)\right\} .
$$
Thus,
$$
\mathcal{F}\left\{ p(t,\cdot)\right\} (\xi)=E_{\alpha}(-|\xi|^{2\beta}t^{\alpha})+R(\xi).
$$
By (\ref{eq:scaling}),
\begin{align*}
\mathcal{F}\left\{ p(t,\cdot)\right\} (\xi) &=\int_{\mathbb{R}}e^{-\textnormal{i}x\xi}p(t,x)dx =\int_{\mathbb{R}}e^{-\textnormal{i}x\xi}t^{-\frac{\alpha}{2\beta}}p(1,t^{-\frac{\alpha}{2\beta}}x)dx\\
 & =\int_{\mathbb{R}}e^{-\textnormal{i}t^{\alpha/2\beta}x\xi}p(1,x)dx
  =\mathcal{F}\left\{ p(1,\cdot)\right\} (t^{\frac{\alpha}{2\beta}}\xi).
\end{align*}
Then by the Riemann-Lebesgue lemma, $\mathcal{F}\left\{ p(t,\cdot)\right\} (\xi)$
converges to 0 as $t\rightarrow\infty$. This implies $R(\xi)\equiv0$
and we obtain the desired result.

\subsection*{Case 4: $d=1,$ $\gamma=\beta$.}

Now we additionally assume $\sigma+\alpha\in\mathbb{N}$ and take
$\ell_{0}$ so that
$$
\max(-2,-1-\frac{1}{2\beta})<\ell_{0}<0.
$$
Note that every argument in Case 2 holds except that
\begin{align*}
\frac{\mathcal{H}_{\sigma,\beta}(z)}{2\beta(1+z)} & =-\frac{\Gamma(\frac{1}{2}+\beta+\beta z)\Gamma(2+z)\Gamma(-z)}{2\Gamma(1-\beta-\beta z)\Gamma(1-\sigma+\alpha z)(z+1)}
\end{align*}
has a removable singularity at $z=-1$ if $\sigma+\alpha\in\mathbb{N}$.
Thus (\ref{eq:residue vanishes}) holds and we obtain (\ref{eq:goal}).
The theorems are proved.

\end{document}